\documentclass[11pt]{article}

\usepackage[T1]{fontenc}
\usepackage{lmodern}
\usepackage[margin=1in]{geometry}
\usepackage{microtype}
\usepackage{amsmath,amssymb,amsthm,mathtools}
\usepackage{aliascnt}
\usepackage{booktabs,tabularx,array}
\usepackage{graphicx}
\usepackage{enumitem}
\usepackage[round,authoryear]{natbib}
\usepackage{xcolor}
\usepackage[colorlinks=true,linkcolor=blue!55!black,citecolor=blue!55!black,urlcolor=blue!55!black]{hyperref}

\hypersetup{
  pdftitle={Universality of e-detectors for ARL control},
  pdfauthor={Professor Aaditya Ramdas}
}
\graphicspath{{universality_revision_assets/}}

\newcommand{\E}{\mathbb{E}}

\newcommand{\cF}{\mathcal{F}}
\newcommand{\cG}{\mathcal{G}}
\newcommand{\cP}{\mathcal{P}}

\newcommand{\1}{\mathbf{1}}
\newcommand{\N}{\mathbb{N}}
\newcommand{\Nzero}{\mathbb{N}_0}

\newcommand{\ARL}{\operatorname{ARL}}
\newcommand{\PFA}{\operatorname{PFA}}
\newcommand{\FAR}{\operatorname{FAR}}
\newcommand{\esssup}{\operatorname*{ess\,sup}}
\theoremstyle{plain}
\newtheorem{theorem}{Theorem}[section]
\newaliascnt{proposition}{theorem}
\newtheorem{proposition}[proposition]{Proposition}
\aliascntresetthe{proposition}
\newaliascnt{lemma}{theorem}
\newtheorem{lemma}[lemma]{Lemma}
\aliascntresetthe{lemma}
\newaliascnt{corollary}{theorem}
\newtheorem{corollary}[corollary]{Corollary}
\aliascntresetthe{corollary}

\theoremstyle{definition}
\newaliascnt{definition}{theorem}
\newtheorem{definition}[definition]{Definition}
\aliascntresetthe{definition}
\newaliascnt{example}{theorem}
\newtheorem{example}[example]{Example}
\aliascntresetthe{example}
\newaliascnt{openproblem}{theorem}

\aliascntresetthe{openproblem}

\theoremstyle{remark}
\newaliascnt{remark}{theorem}
\newtheorem{remark}[remark]{Remark}
\aliascntresetthe{remark}

\usepackage[nameinlink,noabbrev]{cleveref}
\crefname{equation}{equation}{equations}
\crefname{theorem}{theorem}{theorems}
\crefname{proposition}{proposition}{propositions}
\crefname{lemma}{lemma}{lemmas}
\crefname{corollary}{corollary}{corollaries}
\crefname{definition}{definition}{definitions}
\crefname{example}{example}{examples}
\crefname{openproblem}{open problem}{open problems}
\crefname{remark}{remark}{remarks}
\hypersetup{
  colorlinks=true,
  linkcolor=red,
  urlcolor=red,
  citecolor=red
}

\begin{document}

\title{Universality of \(e\)-detectors for ARL control}
\author{
Aaditya Ramdas\thanks{Department of Statistics, Stanford University. Email: \href{mailto:aramdas@stanford.edu}{\texttt{aramdas@stanford.edu}}}
}
\date{\today}
\maketitle

\begin{abstract}
An e-detector for a pre-change class $\mathcal P$ is a nonnegative process $M$ such that $\E_P[M_\tau] \leq \E_P[\tau]$ for all stopping times $\tau$ and all $P \in \mathcal P$. Thresholding e-detectors  controls the average run length (ARL): declaring a change at the first time $T_b$  when $M$ crosses $b$ ensures that $\inf_{P \in \mathcal P}\E_P[T] \geq b$. But e-detectors do substantially more than control the ARL; they also satisfy a \emph{optional-horizon inequality}:
\[
P(T_b\leq\sigma)\leq \E_P[\sigma]/b
\]
for every data-dependent stopping time (monitoring horizon) \(\sigma\) and $P\in \mathcal P$. In particular, every e-detector-based procedure obeys \(P(T\leq t)\leq t/b\) at each fixed $t$, thus avoiding early false alarms. Remarkably, the converse also holds: every stopping time $T$ that satisfies the optional-horizon inequality must in fact arise from thresholding an e-detector.
We also derive a universal representation of stopping times that satisfy (only) ARL control. These are represented by \emph{weak} e-detectors, that only require $\E_P[M_\tau] \leq \E_P[\tau]$ to hold at all threshold stopping times $T_b$.
Appendices present universal representations  for other (less common) change detection metrics.
\end{abstract}

\noindent\textbf{Keywords:} average run length; change detection; e-detector; e-process; false alarm; first-passage time; level-crossing time; optional stopping; sequential testing; universality.

\section{Introduction}\label{sec:introduction}

A useful way to understand a statistical formalism is to ask whether it is merely sufficient for validity or whether it is \emph{universal}: can every valid procedure be recovered within that formalism? Such representation results distinguish restrictions intrinsic to the error criterion from restrictions introduced by a construction method.

E-values give the simplest example. If \(R\in\{0,1\}\) is the realized decision of a level-\(\alpha\) test, then \(E=R/\alpha\) is an e-value, and the test rejects exactly when \(E\geq1/\alpha\). Randomized tests are covered by adjoining the external randomizer that produces the realized binary decision. Thus every hypothesis test can be represented by thresholding an e-value; see \citet{ramdaswang2025}.

The same phenomenon holds sequentially. A one-sided sequential test is identified with an alarm time \(T\) satisfying
\[
\sup_{P\in\cP}P(T<\infty)\leq\alpha.
\]
Then
\begin{equation}\label{eq:canonical-eprocess-intro}
E_n:=\frac1\alpha\1\{T\leq n\},\qquad n\geq0,
\end{equation}
is an e-process, and its first crossing of \(1/\alpha\) is exactly \(T\). Conversely, thresholding an e-process controls the probability of ever crossing. Hence every sequential test---and every change-detection rule controlling the probability of false alarm (PFA)---is recovered by thresholding an e-process. Proposition~7.8 of \citet{ramdaswang2025} gives this exact sequential-test representation.

Change detection is more often calibrated by average run length (ARL), though we consider other metrics later. Under a no-change law \(P\), the ARL of an alarm time \(T\) is \(\E_P[T]\), and a conventional target is
\begin{equation}\label{eq:intro-arl}
\inf_{P\in\cP}\E_P[T]\geq b.
\end{equation}
Unlike PFA control, \eqref{eq:intro-arl} is only a mean constraint. It permits eventual false alarm with probability one, and it allows false-alarm probability to be distributed very unevenly over time. This leads to the paper's motivating question:

\begin{quote}
Is there an object for ARL control that has an exact universality property analogous to those of e-values and e-processes?
\end{quote}

A natural candidate is the e-detector of \citet{shin2024}. A nonnegative adapted process \(M\), initialized at zero, is an e-detector when
\begin{equation}\label{eq:intro-strong}
\E_P[M_\tau]\leq\E_P[\tau]
\end{equation}
for every null law and every stopping horizon \(\tau\), with extended expectations; equivalently, it is enough to check bounded horizons. The above authors showed that thresholding an e-detector at \(b\) gives an alarm time with ARL at least \(b\). The original definition is also stable under  mixtures, which is crucial for the Shiryaev--Roberts-, CUSUM-, and mixture-style constructions their work.

The universality answer (to the earlier posed question) is subtle because \eqref{eq:intro-strong} encodes much more than ARL. If
\[
T_b(M):=\inf\{n\geq1:M_n\geq b\},
\]
then every integrable, possibly data-dependent stopping time \(\sigma\) satisfies the following \emph{optional horizon inequality}:
\begin{equation}\label{eq:intro-optional-tail}
P\bigl(T_b(M)\leq\sigma\bigr)\leq\frac{\E_P[\sigma]}{b}.
\end{equation}
In particular,
\begin{equation}\label{eq:intro-deterministic-tail}
P\bigl(T_b(M)\leq t\bigr)\leq\frac{t}{b}\qquad(t\in\N).
\end{equation}
Thus an e-detector-based procedure cannot obtain a large ARL by placing excessive false-alarm mass near the beginning and compensating with a very long right tail. This distributional guarantee is one of the paper's principal practical conclusions. 

It turns out that e-detectors precisely characterize all alarm times for which the optional horizon inequality holds. We prove that if some alarm time $T$ satisfies $P(T \leq \sigma) \leq \E_P[\sigma]/b$ for all stopping times $\sigma$, then $T$ must have been obtained by thresholding some e-detector at $b$, i.e.\ $T = T_b(M)$ for some e-detector $M$.

We also introduce the notion of a \emph{weak} e-detector, which is only required to satisfy~\eqref{eq:intro-strong} for a particular subset of stopping times: the level-crossings of $M$ itself, i.e.\ for stopping times of the form $T_b(M)$ only. Weak e-detectors do not satisfy the optional horizon inequality, but thresholding them does control the ARL. In fact, every stopping time $T$ that controls (only) the ARL can be recovered by thresholding a weak e-detector.

\paragraph{Clocked e-processes.}
A convenient way to organize and unify these ideas is through a \emph{clock} \(C\), which is a nondecreasing adapted process that records the accumulated false-alarm budget. A \emph{strong \(C\)-clocked e-process} satisfies
\[
\E_P[M_\tau]\leq\E_P[C_\tau]
\]
at every admissible horizon. The choices \(C_n\equiv1\) and \(C_n=n\) recover, respectively, e-processes and the original e-detectors. We prove the following general equivalence:
\begin{equation}\label{eq:intro-clock-equivalence}
P(T\leq\sigma)\leq\frac{\E_P[C_\sigma]}{b}\ \text{for all admissible }\sigma
\quad\Longleftrightarrow\quad
T=T_b(M)\ \text{for a strong \(C\)-clocked e-process}.
\end{equation}
The canonical witness is always \(M_n=b\1\{T\leq n\}\).
The optional-horizon theorem also leads to an exact characterization of null run-length distributions. For every strongly representable rule, the self-censoring horizon \(T\wedge m\) gives the necessary inequality
\begin{equation}\label{eq:intro-truncated-law}
bP(T\leq m)\leq \E_P[T\wedge m],\qquad m\in\N.
\end{equation}
When the filtration is the minimal one generated by the alarm time itself, these inequalities are also sufficient. Thus \eqref{eq:intro-truncated-law} completely characterizes which run-length laws can arise from a strong e-detector in the absence of side information. It is strictly stronger than the deterministic tail and ARL constraints taken together, yet it includes both geometric and deterministic run lengths. Under richer filtrations, the law alone is not enough: information revealed before the alarm can create adaptive horizons that violate optional validity.

For a lower bound on the expected clock at the alarm,
\[
\inf_{P\in\cP}\E_P[C_T]\geq b,
\]
the exact representing object is weaker: the clock inequality need only hold at the process's own level-crossing times. With the calendar clock \(C_n=n\), this yields the weak e-detector, because the above display reduces to ARL control. Weak e-detectors are sound at every threshold and  universal for ARL. We call the original e-detector \emph{strong} only to distinguish these two notions in this paper.

The weak theorem should be interpreted carefully. Its canonical certificate is a restatement of the target alarm event, and the weak class is not even closed under convex combinations. By contrast, the strong class is convex and has a genuinely checkable optimal-stopping certificate: for each null law \(P\), \(M_n-n\) is dominated by an integrable \(P\)-supermartingale with initial value zero. This Snell-envelope characterization explains why strong e-detectors support the mixture and aggregation operations used in practice, while weak e-detectors are best viewed as a language for formal completeness.
The main correspondences are summarized in \Cref{tab:universality}.

\begin{table}[t]
\centering
\small
\renewcommand{\arraystretch}{1.25}
\begin{tabularx}{\textwidth}{@{}>{\raggedright\arraybackslash}p{0.18\textwidth}>{\raggedright\arraybackslash}p{0.26\textwidth}>{\raggedright\arraybackslash}p{0.24\textwidth}X@{}}
\toprule
Setting & Validity condition & Canonical witness & Representing class \\
\midrule
Fixed test & \(P(R=1)\leq\alpha\) & \(E=\alpha^{-1}R\) & e-value \\
Sequential test / PFA & \(P(T<\infty)\leq\alpha\) & \(E_n=\alpha^{-1}\1\{T\leq n\}\) & e-process \\
Proper expected clock & \(\E[C_T]\geq b\) & \(M_n=b\1\{T\leq n\}\) & weak \(C\)-clocked process \\
Optional clock & \(P(T\leq\sigma)\leq\E[C_\sigma]/b\) & \(M_n=b\1\{T\leq n\}\) & strong \(C\)-clocked process \\
\bottomrule
\end{tabularx}
\caption{Four exact representation statements, shown for one null law to reduce notation. The expected-clock row requires a proper clock. ARL and strong e-detector validity are obtained by taking \(C_n=n\); e-process validity is obtained from the fixed clock \(C_n\equiv1\). The fixed clock has no nontrivial weak counterpart at scales \(b>1\), because \(C_T\equiv1\).}
\label{tab:universality}
\end{table}

Beyond the representation theorems, we develop a structural theory for the two classes. Strong e-detectors are convex and stable under independent enlargement, whereas weak e-detectors are nonconvex but admit a monotone normalization that the strong class does not preserve. Convexity permits an independent exponential clock to pin the actual ARL of a nondegenerate strong detector between two explicit bounds. A optimal-stopping value and its ratio form, the maximal strong scale, quantify the weak--strong gap. Snell envelopes characterize strong validity through both supermartingale and martingale domination, while the truncated-mean inequalities above exactly characterize strong run-length laws under the minimal alarm filtration. A time-inhomogeneous Gaussian Shewhart chart then supplies a genuine detection procedure with exact ARL that violates the strong optional-horizon bound: its first-observation power is larger than that of any strongly representable rule in the same Gaussian experiment, but its worst-case continuation delay is correspondingly larger.

The paper is organized as follows. \Cref{sec:background} fixes conventions and reviews related work. \Cref{sec:clocked} develops the clock abstraction that unifies e-processes and e-detectors. \Cref{sec:nonuniversal} shows why strong e-detectors are not universal for ARL alone and gives lottery and Gaussian-chart examples. \Cref{sec:weak} proves weak universality, studies closure under mixtures, and gives an exponential-clock calibration. \Cref{sec:eprocess-analogy} separates own-crossing from optional validity. \Cref{sec:strong-universality} characterizes the strong class, derives the run-length-law and Snell-envelope characterizations, and studies tail and hazard implications. \Cref{sec:discussion} addresses constructibility and statistical consequences. The references appear before a set of self-contained appendices extending the same representation program to other false-alarm metrics; a guide explains their purpose and organization.

\section{Background, notation, and related work}\label{sec:background}

\subsection{Filtered setup and stopping conventions}

Let \((\Omega,\cF,(\cF_n)_{n\geq0})\) be a filtered measurable space, and let \(\cP\) be a nonempty class of probability measures on \((\Omega,\cF)\). The laws in \(\cP\) represent possible no-change (or pre-change) distributions. No independence, identical-distribution, domination, or parametric assumptions are imposed.

We write \(\N=\{1,2,\ldots\}\) and \(\Nzero=\{0,1,2,\ldots\}\). Generic stopping horizons such as \(\tau\) and \(\sigma\) may take values in \(\Nzero\cup\{\infty\}\). An \emph{alarm time} or \emph{detection rule} \(T\), however, always takes values in \(\N\cup\{\infty\}\); in particular, an alarm cannot occur at time zero. This convention is essential for exact threshold representations because every level-crossing time below starts at time one.

For false-alarm events involving a possibly infinite horizon, \(\{T\leq\sigma\}\) means \(\{T<\infty,\,T\leq\sigma\}\). The distinction is immaterial when \(\sigma\) is almost surely finite. For a nonnegative process \(X\), define at a possibly infinite stopping time
\[
X_\tau:=\liminf_{N\to\infty}X_{\tau\wedge N}.
\]
We also use the killed stopped value
\begin{equation}\label{eq:killed}
X_\tau^\dagger:=\sum_{n=0}^{\infty}X_n\1\{\tau=n\},
\end{equation}
which equals zero on \(\{\tau=\infty\}\). Whenever \(\E_P[\tau]<\infty\), the stopping time is finite almost surely and the two conventions agree.

For an adapted nonnegative process \(M\) and \(c>0\), its level-crossing time is
\begin{equation}\label{eq:level-crossing}
T_c(M):=\inf\{n\geq1:M_n\geq c\},\qquad \inf\varnothing:=\infty.
\end{equation}
The term \emph{first-passage time} is equally standard; ``own level crossing'' emphasizes that the stopping rule is generated by the same process whose validity is being assessed.

We will repeatedly use the following clocked localization fact. It removes all integrability caveats from the validity definitions below: extended expectations are permitted, and it is enough to test bounded horizons.

\begin{lemma}[Clocked localization]\label{lem:localization}
Let \(M\) be nonnegative and adapted, and let \(C\) be a nonnegative adapted process that is pathwise nondecreasing. Define
\[
M_\tau:=\liminf_{N\to\infty}M_{\tau\wedge N},
\qquad
C_\tau:=\lim_{N\to\infty}C_{\tau\wedge N}.
\]
If
\begin{equation}\label{eq:clocked-localization-bounded}
\E_P[M_\tau]\leq\E_P[C_\tau]
\end{equation}
for every bounded stopping time \(\tau\), then \eqref{eq:clocked-localization-bounded} holds for every stopping time, possibly infinite. Hence validity over bounded, almost surely finite, or arbitrary stopping times is equivalent whenever the right-hand side is interpreted in \([0,\infty]\).
\end{lemma}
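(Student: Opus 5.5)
The proof is a direct truncation argument: apply the hypothesis at the bounded horizons \(\tau\wedge N\) and let \(N\to\infty\), using Fatou's lemma on the left and monotone convergence on the right. Fix \(P\in\cP\) and an arbitrary stopping time \(\tau\) with values in \(\Nzero\cup\{\infty\}\). For each \(N\in\N\), the time \(\tau\wedge N\) is a bounded stopping time, so by hypothesis
\[
\E_P[M_{\tau\wedge N}]\leq\E_P[C_{\tau\wedge N}].
\]

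For the left side, \(M_\tau=\liminf_N M_{\tau\wedge N}\) by definition and \(M\geq0\). Fatou's lemma therefore gives
\[
\E_P[M_\tau]\leq\liminf_{N\to\infty}\E_P[M_{\tau\wedge N}].
\]
For the right side, \(C\) is pathwise nondecreasing, so \(C_{\tau\wedge N}\) is nondecreasing in \(N\) and converges pointwise to \(C_\tau\). On \(\{\tau<\infty\}\) it is eventually constant; on \(\{\tau=\infty\}\) the limit exists in \([0,\infty]\) by monotonicity. Monotone convergence then gives \(\E_P[C_{\tau\wedge N}]\uparrow\E_P[C_\tau]\in[0,\infty]\). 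Chaining the three displays,
\[
\E_P[M_\tau]\leq\liminf_N\E_P[M_{\tau\wedge N}]\leq\lim_N\E_P[C_{\tau\wedge N}]=\E_P[C_\tau],
\]
which is the claim for every stopping time.

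For the equivalence statement, note that bounded stopping times are almost surely finite and almost surely finite ones are arbitrary stopping times. Validity over arbitrary horizons therefore implies validity over the other two classes, and the argument above closes the cycle.

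There is no serious obstacle; the only care needed is with conventions. First, all expectations must be taken in \([0,\infty]\), so that Fatou's lemma and monotone convergence apply without integrability assumptions; this is why the right-hand side is interpreted in \([0,\infty]\). Second, the definition of \(M_\tau\) via \(\liminf\) must agree with the ordinary stopped value on \(\{\tau<\infty\}\). It does, since \(M_{\tau\wedge N}=M_\tau\) once \(N\geq\tau\).
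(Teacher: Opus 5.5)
Your proposal is correct and is the paper's own argument: apply the bounded-horizon hypothesis at \(\tau\wedge N\), then pass to the limit with Fatou's lemma for \(M\) and monotone convergence for the nondecreasing clock \(C\). Your added remarks on the inclusion chain of horizon classes and on \(M_\tau\) agreeing with the ordinary stopped value on \(\{\tau<\infty\}\) are accurate and fill in details the paper leaves implicit.
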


\begin{proof}
For an arbitrary stopping time \(\tau\), apply the bounded-horizon property to \(\tau\wedge N\). Fatou's lemma on the left and monotone convergence on the right give
\[
\E_P[M_\tau]
\leq\liminf_{N\to\infty}\E_P[M_{\tau\wedge N}]
\leq\lim_{N\to\infty}\E_P[C_{\tau\wedge N}]
=\E_P[C_\tau].
\]
\end{proof}


For a stopping rule \(T\), define
\begin{align}
\PFA_{\cP}(T)&:=\sup_{P\in\cP}P(T<\infty),\label{eq:pfa-def}\\
\ARL_{\cP}(T)&:=\inf_{P\in\cP}\E_P[T].\label{eq:arl-def}
\end{align}
PFA control at level \(\alpha\) requires \(\PFA_{\cP}(T)\leq\alpha\). ARL control at scale \(b\) requires \(\ARL_{\cP}(T)\geq b\); the usual parameterization is \(b=1/\alpha\), in which case the false-alarm rate \(\FAR=1/\ARL\) is at most \(\alpha\).

These criteria have fundamentally different geometry. PFA control reserves probability at infinity: under every null, the procedure must never alarm with probability at least \(1-\alpha\). ARL control permits eventual alarm with probability one. It constrains only the first moment of the alarm time and permits the distribution of false alarms to be highly front-loaded.

\subsection{E-values, e-processes, and e-detectors}

An e-value for \(\cP\) is a nonnegative random variable \(E\) satisfying \(\sup_{P\in\cP}\E_P[E]\leq1\). Markov's inequality yields a level-\(\alpha\) test by rejecting when \(E\geq1/\alpha\), while every realized binary level-\(\alpha\) test \(R\) gives the all-or-nothing e-value \(R/\alpha\).

An e-process is a nonnegative adapted process \(E\) satisfying
\begin{equation}\label{eq:eprocess-def}
\E_P[E_\tau]\leq1
\end{equation}
for every \(P\in\cP\) and every stopping time \(\tau\), with the infinite-horizon convention above. By \Cref{lem:localization} with the fixed clock \(C_n\equiv1\), it is equivalent to check bounded or almost surely finite stopping times. Ville's inequality turns threshold crossing at \(1/\alpha\) into a level-\(\alpha\) sequential test. Conversely, every level-\(\alpha\) sequential test is recovered by an all-or-nothing e-process; see Proposition~7.8 of \citet{ramdaswang2025}.
The e-detector of \citet[Definition~2.2]{shin2024} replaces the fixed unit budget of an e-process by a budget that grows linearly with elapsed time.

\begin{definition}[Strong e-detector; original definition]\label{def:strong}
A nonnegative adapted process \(M=(M_n)_{n\geq0}\), with \(M_0=0\), is a \emph{strong \(\cP\)-e-detector} if, for every \(P\in\cP\) and every stopping time \(\tau\),
\begin{equation}\label{eq:strong-def}
\E_P[M_\tau]\leq\E_P[\tau].
\end{equation}
\end{definition}

The inequality is interpreted in the extended sense and, by \Cref{lem:localization}, may equivalently be checked only at bounded horizons. The modifier \emph{strong} is used only to distinguish this original notion from \Cref{def:weak}.
Thresholding a strong e-detector at \(b\) gives ARL at least \(b\), as established by \citet[Theorem~2.4]{shin2024}. The class is convex: averages and, more generally,  mixtures of strong e-detectors remain strong \citep[Proposition~2.3]{shin2024}. 

\subsection{Related work}

ARL has been a central false-alarm metric since the early control-chart and quickest-detection literature. Shewhart's control chart \citep{shewhart1931}, Page's CUSUM \citep{page1954}, and the Shiryaev and Shiryaev--Roberts procedures \citep{shiryaev1963,roberts1966} are foundational. The minimax formulations of \citet{lorden1971} and \citet{pollak1985}, together with the exact Lorden optimality result of \citet{moustakides1986} and its decision-theoretic development by \citet{ritov1990}, form the classical quickest-detection backbone. Optimality properties of the Shiryaev--Roberts procedure are studied by \citet{pollaktartakovsky2009}; head-started and quasi-stationary variants and their higher-order behavior are developed by \citet{polunchenkotartakovsky2010,tartakovskypollakpolunchenko2012}. Standard monographs and surveys include \citet{siegmund1985,bassevillenikiforov1993,poorhadjiliadis2008,tartakovsky2014,xieetal2021}.

The run-length distribution has long been studied in addition to its mean. \citet{brooksevans1972} compute exact probabilities and quantiles for CUSUM run lengths, and \citet{pollaktartakovsky2009exit} establish asymptotic exponentiality for broad first-exit models. \citet{mei2008} emphasizes that a large ARL can hide substantial early false-alarm risk. Fast-initial-response and head-start designs intentionally trade later in-control behavior for quicker startup response \citep{lucascrosier1982}; their false-alarm probability functions can differ markedly even at similar ARL values \citep{nishinanishiyuki2003}. \citet{pergamenchtchikov2018} develops local unconditional and conditional false-alarm classes over moving windows. The equilibrium-distribution and new-better-than-used-in-expectation (NBUE) literature provides a complementary reliability-theoretic language for comparing a lifetime law with its length-biased residual-life law \citep{marshallolkin2007,nairsankaranpreeth2012}; \Cref{subsec:run-length-laws} identifies an exact connection to strong e-detector run lengths. These strands motivate our focus on the lower tail and on optional selection of the monitoring horizon.

Much of the classical optimality theory assumes specified pre- and post-change laws. Composite, adaptive, and robust variants use generalized likelihood ratios, mixtures, or least-favorable laws; representative contributions include \citet{lai1995,mei2006,unnikrishnanetal2011,xiesiegmund2013,molloyford2017}. Recent work develops nonparametric reductions using backward confidence sequences \citep{shekharramdas2023} and information-theoretic ARL--delay bounds for composite pre-change classes \citep{ramramdas2026}. These papers concern construction and efficiency. Our question is orthogonal: once a null-validity criterion is fixed, which stopping rules admit an exact threshold representation?

E-values and test martingales trace back at least to \citet{ville1939}. Modern methods in anytime-valid inference and game-theoretic statistics are surveyed by \citet{ramdasetal2023} and treated systematically by \citet{ramdaswang2025}. E-processes can be strictly more general than a single nonnegative supermartingale under composite nulls \citep{rufetal2023}. Nevertheless, Snell envelopes and martingale domination provide powerful structural descriptions of admissible anytime-valid procedures \citep{ramdasruflarssonkoolen2022}; we use the same  technique to characterize strong e-detectors.

\citet{shin2024} introduced e-detectors for nonparametric sequential change detection with finite-sample ARL guarantees. Their constructions aggregate e-processes started at candidate changepoints and recover Shiryaev--Roberts- and CUSUM-style statistics. The present work asks what validity condition these objects encode exactly, and why that condition is stronger and more construction-friendly than a bare ARL constraint.

\section{Clocked e-processes: a common representation theorem}\label{sec:clocked}

E-processes and e-detectors differ in the budget against which stopped expectation is compared. A \emph{clock} is a nonnegative adapted process \(C=(C_n)_{n\geq0}\) that is pathwise nondecreasing. For a stopping time \(T\), set
\begin{equation}\label{eq:clock-stopped}
C_T:=\lim_{N\to\infty}C_{T\wedge N}\in[0,\infty].
\end{equation}
The terminology in this section is proposed, but the two principal examples are established objects.

\begin{definition}[Strong clocked e-process]\label{def:clocked-eprocess}
A nonnegative adapted process \(M\) is a \emph{strong \(C\)-clocked \(\cP\)-e-process} if
\begin{equation}\label{eq:clocked-strong}
\E_P[M_\tau]\leq\E_P[C_\tau]
\end{equation}
for every \(P\in\cP\) and every stopping time \(\tau\). By \Cref{lem:localization}, bounded stopping times suffice.
\end{definition}

For \(C_n\equiv1\), \eqref{eq:clocked-strong} is exactly the e-process property by \Cref{lem:localization}. For \(C_n=n\), it is exactly the strong e-detector property. More generally, \(C_n=\sum_{j=1}^n a_j\) for predictable \(a_j\geq0\) measures accumulated exposure instead of calendar time.

\begin{definition}[Optional-clock false-alarm control]\label{def:optional-clock}
Fix \(b>0\). An alarm time \(T\) has \emph{optional-clock false-alarm control at scale \(b\)} if
\begin{equation}\label{eq:optional-clock-control}
P(T\leq\sigma)\leq\frac{\E_P[C_\sigma]}{b}
\end{equation}
for every \(P\in\cP\) and every stopping time \(\sigma\), with the inequality interpreted in the extended sense.
\end{definition}

\begin{theorem}[Optional-clock universality]\label{thm:clocked-strong-universality}
Fix \(b>0\) and an alarm time \(T\). The following are equivalent.
\begin{enumerate}[label=\textup{(\roman*)},leftmargin=2.4em]
\item \(T\) satisfies optional-clock false-alarm control \eqref{eq:optional-clock-control}.
\item There exists a strong \(C\)-clocked \(\cP\)-e-process \(M\) such that \(T=T_b(M)\).
\end{enumerate}
Under \textup{(i)}, the canonical witness is
\begin{equation}\label{eq:canonical-clocked}
M_n^{T,b}:=b\1\{T\leq n\}.
\end{equation}
\end{theorem}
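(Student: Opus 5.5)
The proof has two directions, and both reduce to one elementary identity for the canonical witness. For any stopping time \(\sigma\), the killed and liminf conventions give
\[
M^{T,b}_\sigma=\liminf_{N\to\infty} b\1\{T\leq \sigma\wedge N\}=b\1\{T<\infty,\,T\leq\sigma\}.
\]
This holds because \(\1\{T\leq\sigma\wedge N\}\) is nondecreasing in \(N\) and converges to \(\1\{T\leq\sigma, T<\infty\}\), which is exactly the paper's convention for the event \(\{T\leq\sigma\}\). Hence \(\E_P[M^{T,b}_\sigma]=b\,P(T\leq\sigma)\).

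\textbf{(i)\(\Rightarrow\)(ii).} Assume optional-clock control \eqref{eq:optional-clock-control}. By the identity above, \(\E_P[M^{T,b}_\tau]=bP(T\leq\tau)\leq\E_P[C_\tau]\) for every stopping time \(\tau\) and every \(P\in\cP\). Thus \(M^{T,b}\) is a strong \(C\)-clocked e-process, and it is clearly nonnegative and adapted. It remains to check \(T_b(M^{T,b})=T\). For \(n\geq1\), \(M^{T,b}_n\geq b\) holds exactly when \(T\leq n\), since \(b>0\). Because \(T\) takes values in \(\N\cup\{\infty\}\), the first such \(n\geq1\) is \(T\), and both sides are \(\infty\) when \(T=\infty\). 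This is where the convention that alarms never occur at time zero is used: \(T_b\) starts its search at \(n=1\), so a value at time zero never matters.

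\textbf{(ii)\(\Rightarrow\)(i).} Suppose \(T=T_b(M)\) with \(M\) strong \(C\)-clocked, and fix a stopping time \(\sigma\) and \(P\in\cP\). Set \(\tau:=T\wedge\sigma\), which is a stopping time. On \(\{T<\infty,\,T\leq\sigma\}\) we have \(\tau=T<\infty\), so \(M_\tau=M_T\geq b\) by the definition of the level crossing (here \(M_\tau\) is the actual value, because the liminf stabilizes). Since \(M\geq0\), this gives \(b\1\{T\leq\sigma\}\leq M_\tau\) pointwise, and therefore
\[
bP(T\leq\sigma)\leq\E_P[M_\tau]\leq\E_P[C_\tau]\leq\E_P[C_\sigma].
\]
The last step uses that \(C\) is pathwise nondecreasing and \(\tau\leq\sigma\): we have \(C_{\tau\wedge N}\leq C_{\sigma\wedge N}\) for each \(N\), and we pass to the limit. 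Dividing by \(b\) gives (i), with the extended-sense interpretation when the right side is infinite.

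The only delicate point is bookkeeping with infinite horizons: the meaning of \(M_\tau\) on \(\{\tau=\infty\}\) and of the event \(\{T\leq\sigma\}\). The identity for \(M^{T,b}_\sigma\) and the monotone-limit comparison of clocks handle this. Alternatively, \Cref{lem:localization} lets us verify the canonical witness only at bounded \(\tau\), where no infinite-horizon issues arise at all.
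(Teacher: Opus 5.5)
Your proof is correct and follows the paper's argument: the canonical indicator witness \(b\1\{T\leq n\}\) gives (i)\(\Rightarrow\)(ii), and stopping at \(T\wedge\sigma\) with pathwise monotonicity of the clock gives (ii)\(\Rightarrow\)(i). One small wording fix: the identity \(M^{T,b}_\sigma=b\1\{T<\infty,\,T\leq\sigma\}\) holds under the liminf convention, which is the one the definition uses, but not under the killed convention, since the killed value vanishes on \(\{\sigma=\infty,\,T<\infty\}\).
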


\begin{proof}
Suppose \textup{(ii)} holds. Fix \(P\) and a stopping horizon \(\sigma\), and put \(\rho=T_b(M)\wedge\sigma\). Then \(C_\rho\leq C_\sigma\), while \(M_\rho\geq b\) on \(\{T_b(M)\leq\sigma\}\). Hence
\[
bP(T_b(M)\leq\sigma)
\leq\E_P[M_\rho]
\leq\E_P[C_\rho]
\leq\E_P[C_\sigma].
\]
Conversely, under \textup{(i)}, the process in \eqref{eq:canonical-clocked} crosses \(b\) exactly at \(T\), and for every stopping time \(\tau\),
\[
\E_P[M_\tau^{T,b}]
=bP(T\leq\tau)
\leq\E_P[C_\tau].
\]
\end{proof}

The optional-clock theorem has the following specializations:
\begin{enumerate}[label=\textup{(\alph*)},leftmargin=2.2em]
\item If \(C_n\equiv1\), then threshold crossing obeys \(P(T_b<\infty)\leq1/b\), and every PFA-controlled alarm time has an e-process representation.
\item If \(C_n=n\), then threshold crossing obeys optional-horizon linear false-alarm control, \(P(T_b\leq\sigma)\leq\E[\sigma]/b\).
\item If \(C_n=\sum_{j\leq n}a_j\), then the false-alarm budget is proportional to expected accumulated exposure.
\end{enumerate}


\begin{example}[An exposure clock]\label{ex:exposure-clock}
Suppose observation opportunities occur in calendar time, but the detector is updated only when a predictable indicator \(a_n\in\{0,1\}\) says that a sample is actually collected. The clock
\[
C_n:=\sum_{j=1}^n a_j
\]
counts samples rather than elapsed periods. Optional-clock control then gives
\[
P(T\leq\sigma)\leq \frac{\E[C_\sigma]}{b},
\]
so the false-alarm budget is charged per expected sample collected, even when the sampling schedule is adaptive. More generally, predictable costs \(a_j\geq0\) yield cost-weighted monitoring. This is useful for intermittent sensors, batched inspections, or settings in which observations have unequal acquisition costs.
\end{example}

There is a parallel weak theorem. Assume now that \(C_0=0\) and
\begin{equation}\label{eq:proper-clock}
C_n\uparrow\infty\quad P\text{-almost surely for every }P\in\cP.
\end{equation}
Call such a clock \emph{proper}.

\begin{definition}[Weak clocked e-process]\label{def:weak-clocked}
For a proper clock \(C\), a nonnegative adapted process \(M\), with \(M_0=0\), is a \emph{weak \(C\)-clocked \(\cP\)-e-process} if, for every \(P\in\cP\) and every \(c>0\),
\begin{equation}\label{eq:weak-clocked}
\E_P\bigl[M_{T_c(M)}^\dagger\bigr]
\leq
\E_P[C_{T_c(M)}].
\end{equation}
\end{definition}

\begin{theorem}[Expected-clock universality]\label{thm:weak-clocked-universality}
Let \(C\) be proper. Fix \(b>0\) and an alarm time \(T\). The following are equivalent.
\begin{enumerate}[label=\textup{(\roman*)},leftmargin=2.4em]
\item Clocked ARL control holds:
\begin{equation}\label{eq:expected-clock-control}
\inf_{P\in\cP}\E_P[C_T]\geq b.
\end{equation}
\item There exists a weak \(C\)-clocked \(\cP\)-e-process \(M\) such that \(T=T_b(M)\).
\end{enumerate}
Under \textup{(i)}, one may again take \(M_n=b\1\{T\leq n\}\).
\end{theorem}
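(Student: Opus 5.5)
Both directions will be handled much as in \Cref{thm:clocked-strong-universality}, using the canonical witness for the converse. There is one new wrinkle. In the weak definition the killed value \(M^\dagger\) appears on the left, while the right-hand side is evaluated at a level-crossing time that may be infinite. So the properness assumption \eqref{eq:proper-clock} is what makes \(C_T=\infty\) on \(\{T=\infty\}\).

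\textbf{(ii)\(\Rightarrow\)(i).} Let \(M\) be weak and \(T=T_b(M)\), and fix \(P\in\cP\). I split according to whether \(T\) is finite.
\begin{itemize}
\item If \(P(T=\infty)>0\), then properness gives \(C_T=\lim_N C_{T\wedge N}=\lim_N C_N=\infty\) on \(\{T=\infty\}\). Hence \(\E_P[C_T]=\infty\geq b\).
\item If \(T<\infty\) almost surely, then \(M_T\geq b\) on \(\{T<\infty\}\). Therefore
\[
b\leq\E_P\bigl[M^\dagger_{T}\bigr]\leq\E_P[C_T],
\]
using \eqref{eq:weak-clocked} with \(c=b\).
\end{itemize}
More uniformly, \(\E_P[M_T^\dagger]\geq bP(T<\infty)\) always holds. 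Combined with \(C_T=\infty\) on \(\{T=\infty\}\), this gives the bound in every case. Taking the infimum over \(P\) yields \eqref{eq:expected-clock-control}.

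\textbf{(i)\(\Rightarrow\)(ii).} I take \(M_n=b\1\{T\leq n\}\). It is adapted and nonnegative, and \(M_0=0\) because alarm times take values in \(\N\cup\{\infty\}\). Its crossing time is \(T_b(M)=T\).

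The main step is checking \eqref{eq:weak-clocked} for every \(c>0\), not only for \(c=b\). I will compute the level crossings of this two-valued process directly:
\begin{itemize}
\item For \(0<c\leq b\), the first \(n\geq1\) with \(M_n\geq c\) is exactly \(T\), so \(T_c(M)=T\).
\item For \(c>b\), \(M\) never reaches \(c\), so \(T_c(M)=\infty\).
\end{itemize}
In the second case the killed value is \(M^\dagger_\infty=0\), and the inequality is trivial since \(C\geq0\).

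In the first case we need \(bP(T<\infty)\leq\E_P[C_T]\). If \(P(T=\infty)>0\), properness makes the right-hand side infinite. Otherwise \(P(T<\infty)=1\), and the claim is exactly \(b\leq\E_P[C_T]\), which is assumption (i).

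I expect the only delicate point to be the bookkeeping on \(\{T=\infty\}\). This is precisely where the killed convention \eqref{eq:killed} and the properness assumption \eqref{eq:proper-clock} enter. The convention \(C_T=\lim_N C_{T\wedge N}\) from \eqref{eq:clock-stopped} must be applied so that \(C_T=\infty\) there, and I will verify this explicitly in both directions.
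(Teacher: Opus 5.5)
Your proposal is correct and matches the paper's proof essentially step for step. In the forward direction, properness forces $\E_P[C_T]=\infty$ when $P(T=\infty)>0$, and otherwise weak validity at level $b$ gives the bound; in the converse, the canonical indicator $b\1\{T\leq n\}$ is checked separately for $c\leq b$ and $c>b$. (Your extra case split in the converse is harmless but unnecessary: $\E_P[M_T^\dagger]\leq b\leq\E_P[C_T]$ already follows directly from (i).)
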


\begin{proof}
Suppose \textup{(ii)} holds. If \(P(T=\infty)>0\), properness implies \(\E_P[C_T]=\infty\). Otherwise weak validity at level \(b\) gives
\[
b\leq\E_P[M_T^\dagger]\leq\E_P[C_T].
\]
Conversely, set \(M_n=b\1\{T\leq n\}\). At each level \(0<c\leq b\), its crossing time is \(T\), and
\[
\E_P[M_T^\dagger]\leq b\leq\E_P[C_T].
\]
At levels \(c>b\), it never crosses. Thus \(M\) is weakly \(C\)-clocked and represents \(T\).
\end{proof}

With \(C_n=n\), \Cref{thm:weak-clocked-universality} is precisely ARL universality. The remainder of the main text specializes these clocked results, identifies strict separations, and studies which version has useful closure and verification properties.

\section{Strong e-detectors are not universal for ARL}\label{sec:nonuniversal}

The failure of strong universality is already visible from a consequence of \eqref{eq:strong-def} that is stronger than mean control.

\begin{proposition}[Optional-horizon false-alarm inequality]\label{prop:strong-optional}
Let \(M\) be a strong \(\cP\)-e-detector, fix \(b>0\), and let \(T_b=T_b(M)\). Then, for every \(P\in\cP\) and every stopping time \(\sigma\),
\begin{equation}\label{eq:optional-bound}
P(T_b\leq\sigma)\leq \frac{\E_P[\sigma]}{b}.
\end{equation}
\end{proposition}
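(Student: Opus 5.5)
This is the (ii)\(\Rightarrow\)(i) direction of \Cref{thm:clocked-strong-universality} specialized to the calendar clock \(C_n=n\). I would nonetheless give the direct argument, since it is short and shows where optional stopping enters. Fix \(P\in\cP\) and a stopping time \(\sigma\). If \(\E_P[\sigma]=\infty\) there is nothing to prove, so assume \(\E_P[\sigma]<\infty\); then \(\sigma<\infty\) almost surely. I would apply the strong e-detector inequality \eqref{eq:strong-def} at the truncated horizon
\[
\rho:=T_b\wedge\sigma .
\]
This is a stopping time, being a minimum of stopping times, and \(\rho\leq\sigma\) pathwise. Hence \(\E_P[\rho]\leq\E_P[\sigma]\).

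The second step is a pointwise lower bound on \(M_\rho\). On the event \(\{T_b\leq\sigma\}\), which by convention includes \(T_b<\infty\), we have \(\rho=T_b\) finite. By the definition \eqref{eq:level-crossing} of the level-crossing time, \(M_{T_b}\geq b\). Nonnegativity of \(M\) handles the complement, so
\[
M_\rho\geq b\,\1\{T_b\leq\sigma\}.
\]
Because \(\rho\) is almost surely finite, the \(\liminf\) convention for \(M_\rho\) coincides with the actual stopped value, and no subtlety about infinite horizons arises.

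Chaining these gives
\[
bP(T_b\leq\sigma)\leq\E_P[M_\rho]\leq\E_P[\rho]\leq\E_P[\sigma],
\]
and dividing by \(b>0\) yields \eqref{eq:optional-bound}. The only point needing care, and it is mild, is the use of \eqref{eq:strong-def} at an arbitrary, possibly unbounded \(\rho\). \Cref{lem:localization} with \(C_n=n\) handles this, since it upgrades the bounded-horizon inequality to all stopping times with extended expectations. Alternatively, one can apply the inequality at \(\rho\wedge N\) and let \(N\to\infty\), using Fatou's lemma on the event side and monotone convergence on the right-hand side.
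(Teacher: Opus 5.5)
Your proof is correct and is essentially the paper's argument: truncate at \(\rho=T_b\wedge\sigma\), use \(M_\rho\geq b\) on \(\{T_b\leq\sigma\}\), and chain \(bP(T_b\leq\sigma)\leq\E_P[M_\rho]\leq\E_P[\rho]\leq\E_P[\sigma]\). Your reduction to \(\E_P[\sigma]<\infty\) and the appeal to \Cref{lem:localization} are harmless but not needed, since \Cref{def:strong} already requires the inequality at every stopping time, with expectations taken in the extended sense.
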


\begin{proof}
Set \(\rho:=T_b\wedge\sigma\). On \(\{T_b\leq\sigma\}\), one has \(\rho=T_b<\infty\), and hence \(M_\rho\geq b\). Therefore
\[
bP(T_b\leq\sigma)\leq \E_P[M_\rho]
\leq \E_P[\rho]
\leq \E_P[\sigma],
\]
where the middle inequality is the strong e-detector property.
\end{proof}

Taking \(\sigma\equiv t\) gives the following deterministic tail bound.

\begin{corollary}[Linear deterministic-horizon bound]\label{cor:linear-tail}
Under the assumptions of \Cref{prop:strong-optional}, for every integer \(t\geq1\),
\begin{equation}\label{eq:linear-tail}
P(T_b\leq t)\leq \frac{t}{b}.
\end{equation}
In particular, \(P(T_b=1)\leq1/b\).
\end{corollary}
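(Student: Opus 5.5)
The corollary should follow by specializing \Cref{prop:strong-optional} to a deterministic horizon. Fix an integer \(t\geq1\) and a law \(P\in\cP\), and take \(\sigma\equiv t\). A constant is trivially a stopping time: the event \(\{\sigma\leq n\}\) is either \(\varnothing\) or \(\Omega\), so it lies in \(\cF_n\) for every \(n\). It is also bounded. With this choice, \eqref{eq:optional-bound} gives
\[
P(T_b\leq t)\leq \frac{\E_P[t]}{b}=\frac{t}{b}.
\]
Two conventions need checking. First, since \(\sigma\) is finite, the event \(\{T_b\leq\sigma\}\) carries no infinite-horizon subtlety. Second, \(\E_P[\sigma]=t\) exactly.

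The special case \(P(T_b=1)\leq1/b\) comes from setting \(t=1\). Level-crossing times start at time one, so \(T_b\geq1\) by \eqref{eq:level-crossing}, and therefore \(\{T_b\leq1\}=\{T_b=1\}\).

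As a cross-check that does not rely on the proposition, one can argue directly. Put \(\rho=T_b\wedge t\), a bounded stopping time. On the event \(\{T_b\leq t\}\) we have \(M_\rho\geq b\), and \(M\) is nonnegative, so
\[
bP(T_b\leq t)\leq \E_P[M_\rho]\leq \E_P[\rho]\leq t,
\]
where the middle inequality uses \eqref{eq:strong-def}.

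No step here is a real obstacle. The only point requiring care is that \Cref{prop:strong-optional} allows arbitrary stopping times \(\sigma\), so a constant horizon is admissible, and this is immediate.
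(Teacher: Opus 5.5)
Your proposal is correct and matches the paper exactly: the corollary comes from taking the deterministic horizon \(\sigma\equiv t\) in \Cref{prop:strong-optional}, and \(t=1\) gives the first-step bound because level-crossing times are at least one. Your direct cross-check via \(\rho=T_b\wedge t\) just reproduces the proposition's own proof in this special case.
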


The gap between ordinary ARL and optional-horizon control can be summarized by one number.

\begin{definition}[Maximal strong scale]\label{def:strong-scale}
For a null law \(P\), define
\begin{equation}\label{eq:strong-scale}
b_P^*(T)
:=
\inf_{\substack{\sigma\text{ a stopping time}\\ \E_P[\sigma]<\infty,\;P(T\leq\sigma)>0}}
\frac{\E_P[\sigma]}{P(T\leq\sigma)},
\end{equation}
with the infimum of the empty set interpreted as \(\infty\). For a composite null class, set
\[
b_{\cP}^*(T):=\inf_{P\in\cP}b_P^*(T).
\]
We call this the \emph{maximal strong scale} of \(T\).
\end{definition}

\begin{proposition}[Meaning of the strong scale]\label{prop:strong-scale}
An alarm time \(T\) satisfies optional-horizon linear false-alarm control at scale \(b\) over \(\cP\) if and only if
\[
b\leq b_{\cP}^*(T).
\]
Consequently, after the representation theorem in \Cref{thm:strong-universality}, \(b_{\cP}^*(T)\) is exactly the largest scale at which \(T\) is representable by a strong e-detector. If \(T\) is integrable under \(P\), then
\begin{equation}\label{eq:strong-scale-vs-arl}
b_P^*(T)\leq \E_P[T],
\end{equation}
so the ratio \(\E_P[T]/b_P^*(T)\) quantifies the weak--strong gap for that law and filtration.
\end{proposition}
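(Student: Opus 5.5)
The plan is to unwind the definition of \(b_P^*(T)\) and compare it with the optional-horizon condition \(P(T\leq\sigma)\leq\E_P[\sigma]/b\) horizon by horizon. First I fix \(P\in\cP\) and show that \(P(T\leq\sigma)\leq\E_P[\sigma]/b\) for all stopping times \(\sigma\) holds if and only if \(b\leq b_P^*(T)\). The condition is automatic for horizons with \(\E_P[\sigma]=\infty\), since the right-hand side is then \(+\infty\) in the extended sense. It is also automatic when \(P(T\leq\sigma)=0\). Every remaining horizon belongs to the index set of the infimum in \eqref{eq:strong-scale}.

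For \(b>0\) and such \(\sigma\), the inequality is equivalent to \(b\leq\E_P[\sigma]/P(T\leq\sigma)\). Taking the infimum over admissible \(\sigma\) gives the claim for a single law, with the empty-infimum convention covering the case where no admissible horizon exists. Intersecting over \(P\in\cP\) and using \(b_\cP^*(T)=\inf_P b_P^*(T)\) yields the composite statement.

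For the second sentence, I combine this with the strong representation theorem, i.e.\ \Cref{thm:clocked-strong-universality} with \(C_n=n\). Optional-horizon control at scale \(b\) is equivalent to \(T=T_b(M)\) for some strong e-detector \(M\); the witness \(b\1\{T\leq n\}\) has \(M_0=0\) because \(T\geq1\). Hence the set of representable scales is exactly \((0,b_\cP^*(T)]\) whenever \(b_\cP^*(T)\) is finite and positive. That set contains its supremum, which is the claimed largest scale.

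For \eqref{eq:strong-scale-vs-arl}, I take \(\sigma=T\), which is a stopping time. If \(T\) is \(P\)-integrable, then \(T<\infty\) almost surely, so \(P(T\leq T)=1>0\) and \(\E_P[T]<\infty\). Thus \(T\) is admissible in the infimum, giving \(b_P^*(T)\leq\E_P[T]/1\).

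The only delicate point is bookkeeping in the extended sense. The convention \(\{T\leq\sigma\}=\{T<\infty,T\leq\sigma\}\) matters for infinite \(\sigma\), but such \(\sigma\) have infinite mean and are excluded from the infimum anyway. I also need to handle the degenerate cases \(b_P^*(T)=\infty\) and \(b_P^*(T)=0\). In the latter case no positive \(b\) works, so there is no largest representable scale, and the statement should be read accordingly.
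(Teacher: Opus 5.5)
Your proof is correct and follows the paper's argument: horizons with infinite mean or with $P(T\leq\sigma)=0$ satisfy the inequality trivially, on the remaining horizons the inequality is equivalent to $b\leq\E_P[\sigma]/P(T\leq\sigma)$, taking infima over $\sigma$ and then $P$ gives the first claim, and $\sigma=T$ gives the ARL comparison. One simplification: since $T\geq1$, you have $\sigma\geq1$ on $\{T\leq\sigma\}$, so every ratio in the infimum is at least one and $b_{\cP}^*(T)\geq1$; the degenerate case $b_P^*(T)=0$ that you set aside therefore never arises.
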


\begin{proof}
The optional-horizon inequality is equivalent to
\(
b\leq \E_P[\sigma]/P(T\leq\sigma)
\)
for every horizon appearing in \eqref{eq:strong-scale}; taking the infimum over \(\sigma\) and then over \(P\) proves the first claim. If \(T\) is integrable, the choice \(\sigma=T\) gives \eqref{eq:strong-scale-vs-arl}.
\end{proof}

ARL control imposes no comparable bound. The following elementary example is useful because every quantity is explicit.

\begin{example}[An early-alarm lottery]\label{ex:early-lottery}
Let \(b=4\), let \(Y\sim\mathrm{Bernoulli}(1/2)\), reveal \(Y\) at time one, and define
\[
T:=
\begin{cases}
1,&Y=1,\\
7,&Y=0.
\end{cases}
\]
Then \(T\) is a stopping time and
\[
\E[T]=\frac12\cdot1+\frac12\cdot7=4=b.
\]
Thus \(T\) has exact ARL \(b\). Yet \(P(T=1)=1/2>1/4=1/b\), so \Cref{cor:linear-tail} rules out representing \(T\) as the level-\(b\) crossing time of any strong e-detector.
\end{example}

\begin{corollary}[Failure of strong universality for ARL]\label{cor:no-strong-univ}
For every \(b>1\), there exist a filtered probability space, a singleton null class \(\cP\), and an alarm time \(T\) with \(\ARL_{\cP}(T)\geq b\) that is not the level-\(b\) crossing time of any strong \(\cP\)-e-detector.
\end{corollary}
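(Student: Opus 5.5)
The plan is to generalize \Cref{ex:early-lottery} to arbitrary \(b>1\), using \Cref{cor:linear-tail} as the obstruction. We take a probability space carrying a single random variable \(Y\sim\mathrm{Bernoulli}(p)\), revealed at time one. So \(\cF_0\) is trivial and \(\cF_n=\sigma(Y)\) for \(n\geq1\), and the null class is the singleton \(\cP=\{P\}\). The alarm time is
\[
T:=\begin{cases}1,&Y=1,\\ L,&Y=0,\end{cases}
\]
where \(L\in\N\) is a deterministic integer. Since \(\{T\leq n\}\in\sigma(Y)=\cF_n\) for all \(n\geq1\), and \(T\geq1\), this \(T\) is a legitimate alarm time and stopping time.

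The parameters are chosen so that two things hold at once:
\[
P(T=1)=p>\frac1b
\qquad\text{and}\qquad
\E_P[T]=p+(1-p)L\geq b.
\]
A concrete choice is \(p:=\tfrac12\bigl(\tfrac1b+1\bigr)\), which lies strictly between \(1/b\) and \(1\) because \(b>1\). Then take \(L:=\lceil (b-p)/(1-p)\rceil\), or any larger integer, so that \(L\geq1\) and \(\E_P[T]\geq b\). Hence \(\ARL_{\cP}(T)\geq b\).

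To conclude, suppose \(T=T_b(M)\) for some strong \(\cP\)-e-detector \(M\). \Cref{cor:linear-tail} with \(t=1\) gives \(P(T=1)\leq 1/b\), which contradicts \(p>1/b\). So \(T\) is not a level-\(b\) crossing time of any strong e-detector.

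There is no real obstacle here. The only points needing care are keeping \(p<1\), so that a finite \(L\) exists, and keeping \(p>1/b\); both are possible exactly because \(b>1\). This also explains why the statement excludes \(b\leq1\).
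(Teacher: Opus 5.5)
Your proof is correct and follows the paper's route. The paper's witness is the early-alarm lottery of \Cref{ex:early-lottery}, generalized (as in the proof of \Cref{cor:anytime-precludes}) to \(T=1\) with probability \(p\in(1/b,1)\) and \(T=L\) otherwise; the obstruction is \Cref{cor:linear-tail} at \(t=1\), and your explicit choices of \(p\) and \(L\) check out.
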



\section{Weak e-detectors and exact universality for ARL}\label{sec:weak}

For threshold-based ARL control, the defining inequality of a strong e-detector is applied at one special stopping time: the process's own threshold crossing. This motivates the following weakening.

\begin{definition}[Weak e-detector]\label{def:weak}
A nonnegative adapted process \(M=(M_n)_{n\geq0}\), with \(M_0=0\), is a \emph{weak \(\cP\)-e-detector} if, for every \(P\in\cP\) and every level \(c>0\),
\begin{equation}\label{eq:weak-def}
\E_P\bigl[M_{T_c(M)}^\dagger\bigr]
\leq
\E_P\bigl[T_c(M)\bigr].
\end{equation}
\end{definition}

Thus validity is required only at the process's own level-crossing times, at every level.
Clearly, every strong \(\cP\)-e-detector is a weak \(\cP\)-e-detector.
The weak definition retains exactly the argument needed for ARL control: thresholding a weak e-detector at $c$ results in a stopping time with ARL at least $c$.

\begin{theorem}[ARL control for weak e-detectors.]\label{thm:weak-soundness}
If \(M\) is a weak \(\cP\)-e-detector, then for every \(c>0\),
\begin{equation}\label{eq:weak-soundness}
\inf_{P\in\cP}\E_P[T_c(M)]\geq c.
\end{equation}
\end{theorem}

\begin{proof}
Fix \(P\in\cP\). If \(\E_P[T_c(M)]=\infty\), there is nothing to prove. Otherwise, \(T_c(M)<\infty\) almost surely and \(M_{T_c(M)}\geq c\). Hence
\[
c\leq \E_P[M_{T_c(M)}]
\leq \E_P[T_c(M)].
\]
Taking the infimum over \(P\) proves the claim.
\end{proof}

Weakness is strict. The early-alarm lottery from \Cref{ex:early-lottery} already supplies a transparent witness.

\begin{example}[A weak e-detector that is not strong]\label{ex:weak-not-strong}
For the stopping time \(T\) in \Cref{ex:early-lottery}, define
\begin{equation}\label{eq:lottery-canonical}
M_n:=4\1\{T\leq n\}.
\end{equation}
For every \(0<c\leq4\), the crossing time \(T_c(M)\) equals \(T\), and
\[
\E[M_{T_c(M)}]=4=\E[T_c(M)].
\]
For \(c>4\), the process never crosses and the weak inequality is automatic. Thus \(M\) is weak. It is not strong, since at the deterministic stopping time \(\tau\equiv1\),
\[
\E[M_1]=4P(T=1)=2>1=\E[\tau].
\]
\end{example}

\subsection{Closure under mixtures and the limits of weak constructibility}

The two detector classes have sharply different closure properties.

\begin{proposition}[Strong convexity and weak nonconvexity]\label{prop:convexity-separation}
\begin{enumerate}[label=\textup{(\alph*)},leftmargin=2.2em]
\item Strong e-detectors are closed under fixed mixtures.
\item The class of weak e-detectors need not be convex, even for a singleton null law.
\end{enumerate}
\end{proposition}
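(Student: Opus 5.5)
For part (a), the plan is to work with bounded horizons only, which \Cref{lem:localization} (with the calendar clock \(C_n=n\)) allows. Let \(M^{(1)},\dots,M^{(K)}\) be strong \(\cP\)-e-detectors and let \(w_k\geq0\) be weights with \(\sum_k w_k=1\). Put \(\bar M:=\sum_k w_kM^{(k)}\). Then \(\bar M\) is nonnegative and adapted, and \(\bar M_0=0\). For a bounded stopping time \(\tau\) the stopped value is just \(M_\tau\), so no \(\liminf\) convention intervenes. Linearity of expectation then gives
\[
\E_P[\bar M_\tau]=\sum_k w_k\E_P[M^{(k)}_\tau]\leq\sum_kw_k\E_P[\tau]=\E_P[\tau].
\]
\Cref{lem:localization} extends this inequality to all stopping times. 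For a general mixture \(\int M^{(\theta)}\,\mu(d\theta)\) over a jointly measurable family, the same argument applies with Tonelli's theorem in place of finite linearity; nonnegativity justifies the interchange. This is routine, and it is also \citet[Proposition~2.3]{shin2024}.

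For part (b), the plan is to exhibit two weak e-detectors under a single null law whose average is not weak. I would reuse the lottery mechanism of \Cref{ex:early-lottery}. Take \(b=4\), let \(Y\sim\mathrm{Bernoulli}(1/2)\) be revealed at time one (so \(Y\) is \(\cF_1\)-measurable), and define two alarm times:
\[
T^{(1)}:=\begin{cases}1,&Y=1,\\7,&Y=0,\end{cases}
\qquad
T^{(2)}:=\begin{cases}1,&Y=0,\\7,&Y=1.\end{cases}
\]
Each has mean \(4\). By \Cref{ex:weak-not-strong}, or equivalently \Cref{thm:weak-clocked-universality} with \(C_n=n\), the canonical processes \(M^{(i)}_n:=4\1\{T^{(i)}\leq n\}\) are weak e-detectors. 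The reason is that for \(0<c\leq4\) their crossing time is \(T^{(i)}\), which gives \(\E[M^{(i)}_{T^{(i)}}]=4=\E[T^{(i)}]\), while for \(c>4\) they never cross.

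Now consider \(\bar M:=\tfrac12(M^{(1)}+M^{(2)})\). Exactly one of \(T^{(1)},T^{(2)}\) equals one on every path, so \(\bar M_1=2\) surely. Take the level \(c=2\). Its crossing time is \(T_2(\bar M)\equiv1\), so
\[
\E\bigl[\bar M^\dagger_{T_2(\bar M)}\bigr]=2>1=\E\bigl[T_2(\bar M)\bigr],
\]
which violates \eqref{eq:weak-def}. Hence \(\bar M\) is not weak, and the weak class is not convex.

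The only real point to get right is choosing the two processes so that the mixture crosses an intermediate level early and deterministically. The symmetric swapping of \(Y\) achieves this. It also shows that the failure comes from the mixture creating new level-crossing times that neither component's own validity constrains. The remaining verifications are the adaptedness of \(T^{(i)}\) (immediate, since \(Y\in\cF_1\)) and the weak inequality at every level for the components, both of which follow from \Cref{ex:weak-not-strong}.
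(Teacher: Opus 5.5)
Your proposal is correct and follows the paper's own proof. Part (a) is the same linearity/Tonelli mixture argument; routing it through bounded horizons and clocked localization is a slightly more careful treatment of the stopped-value convention, not a different idea. Part (b) uses exactly the paper's pair of swapped Bernoulli lotteries, whose average sits deterministically at \(2\) at time one and so violates the weak inequality at level \(c=2\).
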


\begin{proof}
For \textup{(a)}, if \(M^a\) is strong for each \(a\) and \(\mu\) is a probability measure, Tonelli's theorem gives, for every admissible \(\tau\),
\[
\E_P\!\left[\int M_\tau^a\,d\mu(a)\right]
=\int\E_P[M_\tau^a] \,d\mu(a)
\leq\E_P[\tau].
\]
This is the mixture property of \citet[Proposition~2.3]{shin2024}.

For \textup{(b)}, let \(Y\sim\operatorname{Bernoulli}(1/2)\) be revealed at time one. Define
\[
T=\begin{cases}1,&Y=1,\\7,&Y=0,\end{cases}
\qquad
T'=\begin{cases}1,&Y=0,\\7,&Y=1.\end{cases}
\]
Both have mean four. Define
\[
M_n^{(1)}=4\1\{T\leq n\},
\qquad
M_n^{(2)}=4\1\{T'\leq n\}.
\]
For either process and every \(0<c\leq4\), the level-\(c\) crossing is the corresponding alarm time and the stopped value has expectation four, equal to the expected crossing time. For \(c>4\), the process never crosses. Thus both processes are weak e-detectors, by direct verification. Their average is deterministic:
\[
\frac{M_n^{(1)}+M_n^{(2)}}2
=\begin{cases}
2,&1\leq n\leq6,\\
4,&n\geq7.
\end{cases}
\]
At level \(c=2\), its crossing time is one and its stopped value is two, so
\[
\E\!\left[\left(\frac{M^{(1)}+M^{(2)}}2\right)_{T_2}\right]=2>1=\E[T_2].
\]
The average is not weak.
\end{proof}

There is a second, complementary asymmetry.

\begin{proposition}[Monotone normalization and running maxima]\label{prop:running-maxima}
\begin{enumerate}[label=\textup{(\alph*)},leftmargin=2.2em]
\item Every weak e-detector \(M\) may be replaced by the nondecreasing process
\[
\overline M_n:=\max_{0\leq j\leq n}M_j
\]
without changing any level-crossing time or any stopped value at a first crossing. In particular, \(\overline M\) is weak whenever \(M\) is weak.
\item Strong e-detectors are not closed under running maxima.
\end{enumerate}
\end{proposition}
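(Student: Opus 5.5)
For part (a), I will argue pathwise, showing that \(M\) and \(\overline M\) have the same first-passage behaviour. Fix a level \(c>0\) and a path. For \(n\geq1\), if \(M_j<c\) for all \(1\leq j\leq n\), then \(\overline M_j<c\) for all \(1\leq j\leq n\) as well, because \(M_0=0<c\). Conversely, \(\overline M_n\geq c\) with \(n\geq1\) forces some \(j\in\{1,\dots,n\}\) with \(M_j\geq c\); here \(j\neq0\) since \(M_0=0\). Together these give \(T_c(\overline M)=T_c(M)\).

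On \(\{T_c(M)=n<\infty\}\) we have \(M_j<c\) for \(1\leq j<n\) and \(M_n\geq c\). Hence \(\overline M_n=\max(0,M_1,\dots,M_n)=M_n\), so the killed stopped values \(\overline M^\dagger_{T_c}\) and \(M^\dagger_{T_c}\) coincide. On \(\{T_c=\infty\}\) both killed values are zero. Adaptedness, nonnegativity and \(\overline M_0=0\) are immediate. Since the defining inequality \eqref{eq:weak-def} involves only \(T_c(M)\) and \(M^\dagger_{T_c(M)}\), weak validity of \(\overline M\) follows at once.

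For part (b), I need a strong e-detector whose running maximum fails strong validity. Any such example must violate \eqref{eq:strong-def} at some horizon other than \(\overline M\)'s own crossing times, because part (a) shows weak validity is preserved. The plan is to let \(M\) spike early and then drop, so that \(\overline M\) keeps the spike value at later deterministic times.

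Take \(Y\sim\mathrm{Bernoulli}(1/2)\) revealed at time one, and set \(M_0=0\), \(M_1=2Y\), and \(M_n=0\) for \(n\geq2\). Then \(M_1\) has mean one. For a bounded stopping time \(\tau\), we have \(M_\tau=2Y\1\{\tau=1\}\), so
\[
\E[M_\tau]\leq \E[2Y]=1\leq \E[\tau]
\]
whenever \(\tau\geq1\). If \(\tau=0\) on some event, then \(\tau\) is \(\cF_0\)-measurable and the contribution from that event is zero. This needs a little care: on \(\{\tau=0\}\) we have \(M_\tau=0\), and on \(\{\tau\geq1\}\) we have \(\tau\geq1\) while \(\E[M_\tau\1\{\tau\geq1\}]=\E[2Y\1\{\tau=1\}]\). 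Since \(\{\tau=1\}\in\cF_1\), this is not automatically at most \(P(\tau\geq1)\), because \(\tau\) may choose to stop at time one exactly when \(Y=1\).

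To avoid that issue, I will bound against the expected stopping time directly. From \(2Y\1\{\tau=1\}\leq 2\1\{\tau=1\}\) we get \(\E[M_\tau]\leq 2P(\tau=1)\). If we also have \(\E[\tau]\geq 2P(\tau=1)\), we are done, but a stopping rule with \(\tau=1\) exactly on \(\{Y=1\}\) and \(\tau=0\) elsewhere is not allowed, since \(\{\tau=0\}\) must lie in \(\cF_0\). When \(\cF_0\) is trivial, either \(\tau\equiv0\), giving \(\E[M_\tau]=0\), or \(\tau\geq1\), giving \(\E[M_\tau]\leq \E[2Y]=1\leq\E[\tau]\). So I will assume a trivial \(\cF_0\); this is where the construction genuinely needs checking.

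Then \(\overline M_n=2Y\) for all \(n\geq1\). Now use the random horizon \(\tau=1+\1\{Y=1\}\cdot 0\), or more simply a horizon that stays longer when the spike occurred: \(\tau=2\) if \(Y=1\) and \(\tau=1\) if \(Y=0\). This gives
\[
\E[\overline M_\tau]=1\leq\E[\tau]=1.5,
\]
which is not a violation. The value must be made larger, so I will rescale. Let \(Y\sim\mathrm{Bernoulli}(p)\), \(M_1=Y/p\), and \(M_n=0\) for \(n\geq2\); strong validity holds by the same check. For the running maximum, take \(\tau=k\) on \(\{Y=1\}\) and \(\tau=1\) on \(\{Y=0\}\). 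Then \(\E[\overline M_\tau]=1\) while \(\E[\tau]=1+p(k-1)\), which again gives no violation.

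Since the lone spike does not work, a better plan is to make \(M\) spike repeatedly. Let \(M\) be a sum of independent fresh bets, \(M_n=Z_n/p\) with \(Z_n\) i.i.d. \(\mathrm{Bernoulli}(p)\) revealed at time \(n\). Then \(\E[M_\tau]=\E[\sum_{n\geq1}\1\{\tau=n\}Z_n/p]\leq P(\tau\geq1)\leq\E[\tau]\), because \(\{\tau=n\}\) is decided using \(Z_n\) only through time \(n\); this needs care but the predictable decomposition shows that \(\E[Z_n\1\{\tau=n\}]\leq p\,P(\tau\geq n)\). Meanwhile \(\overline M_\tau=1/p\) as soon as any spike has occurred by time \(\tau\). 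At the deterministic horizon \(\tau=m\) we get \(\E[\overline M_m]=(1-(1-p)^m)/p\), which is still at most \(m\).

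This construction also fails, so the main obstacle is finding the right counterexample, and the likely fix is to exploit the supremum over stopping times more cleverly. I would try \(\tau\) equal to the first spike time capped at \(m\). Then \(\E[\overline M_\tau]=(1-(1-p)^m)/p=\E[\tau]\), which is a tie. The first thing to try next is a decaying \(M\), for instance \(M_1=2Y\) and \(M_2=\) a fresh coin whose value is small when \(Y=1\), checking small cases numerically until a violation appears.
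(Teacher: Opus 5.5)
Your part (a) is correct and is the paper's argument: crossing times agree because \(M_0=0\), and at a finite first crossing every earlier value is below \(c\), so \(\overline M_{T_c}=M_{T_c}\). Part (b), however, has a genuine gap. You never exhibit a counterexample, and the constructions you tried could never yield one. In the lone-spike example, \(\overline M_n=M_1\) for every \(n\geq1\) is a single mean-one variable held forever. Hence \(\E[\overline M_\tau]\leq P(\tau\geq1)\leq\E[\tau]\) at every horizon: holding one early spike costs one unit of budget, while the budget grows linearly. In the i.i.d.-spike example, \(\overline M_n\leq\sum_{j\leq n}Z_j/p\), and this cumulative sum is itself a strong e-detector, since its stopped mean equals \(\E[\tau]\) by optional stopping. (Your intermediate bound \(\E[M_\tau]\leq P(\tau\geq1)\) is false, e.g.\ at the first spike time capped at two with \(p=1/2\), although \(\E[M_\tau]\leq\E[\tau]\) does hold.) Whenever \(\overline M\) is pathwise dominated by a strong detector, no horizon can expose a violation. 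Further search within these families is therefore futile.

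The missing idea is that \(M\) must saturate its budget at two different times, on different events. It should use its entire time-one budget, then drop, and later use its entire time-two budget on a small event. The running maximum then keeps the early value on the complement of that event, so it spends budget twice. The paper takes \(K>2\), an event \(B\) with \(P(B)=2/K\), \(\cF_1\) trivial, \(\cF_2=\sigma(B)\), and sets \(M_0=0\), \(M_1=1\), \(M_2=K\1_B\), \(M_n=0\) for \(n\geq3\). Because \(\cF_1\) is trivial, every stopping time is \(\equiv0\), \(\equiv1\), or \(\geq2\) almost surely. In the last case \(\E[M_\tau]\leq KP(B)=2\leq\E[\tau]\), so \(M\) is strong. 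Yet at the deterministic horizon two, \(\E[\overline M_2]=KP(B)+P(B^c)=3-2/K>2\). Your decaying idea can also be made to work, but only with the later bet large rather than small on the spike branch. Take independent fair coins \(Y,W\) revealed at times one and two, \(\cF_0\) trivial, and set \(M_1=2Y\), \(M_2=6YW\), \(M_n=0\) for \(n\geq3\). Then \(M\) is strong: on \(\{Y=1\}\) every continuation has net value at most one, and on \(\{Y=0\}\) net value at most \(-1\). But the adaptive horizon \(\tau=1+Y\) gives \(\E[\overline M_\tau]=2>3/2=\E[\tau]\).
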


\begin{proof}
For \textup{(a)}, fix \(c>0\). The first time that the running maximum reaches \(c\) is exactly the first time that \(M\) reaches \(c\). If this time is finite, every earlier value is below \(c\), whereas the crossing value is at least \(c\); hence \(\overline M_{T_c(M)}=M_{T_c(M)}\). Weak validity is therefore unchanged.

For \textup{(b)}, let \(K>2\), let \(B\) be an event with \(P(B)=2/K\), take \(\cF_1\) trivial and \(\cF_2=\sigma(B)\), and define
\[
M_0=0,\qquad M_1=1,\qquad M_2=K\1_B,\qquad M_n=0\quad(n\geq3).
\]
If a stopping time \(\tau\) is not identically zero or one, triviality of \(\cF_1\) forces \(\tau\geq2\) almost surely, and then
\[
\E[M_\tau]\leq KP(B)=2\leq\E[\tau].
\]
The cases \(\tau\equiv0\) and \(\tau\equiv1\) are immediate, so \(M\) is strong. At deterministic time two, however,
\[
\E[\overline M_2]
=K P(B)+P(B^c)
=3-\frac2K
>2.
\]
Thus \(\overline M\) is not strong.
\end{proof}

The failure is not cosmetic. Taking mixtures over betting parameters is central e-detector construction tool. Strong validity survives this operation automatically; weak validity does not. Weak e-detectors therefore provide an exact representation of ARL-valid stopping rules, but not a comparably robust recipe for building them.

Convexity also yields a useful way to make the actual ARL finite and quantitatively close to the threshold while retaining a data-driven detector. Randomization enlarges the filtration, so we first record the stability statement that makes this operation legitimate.

\begin{lemma}[Independent enlargement]\label{lem:independent-enlargement}
Let \(M\) be a strong \(C\)-clocked \(\cP\)-e-process on \((\Omega,\cF,(\cF_n))\). Let \((\Omega',\cF',(\cF_n'),Q)\) carry independent auxiliary randomness, and equip the product space with
\[
\widetilde\cF_n:=\cF_n\otimes\cF_n',
\qquad
\widetilde\cP:=\{P\otimes Q:P\in\cP\}.
\]
Viewing \(M\) and \(C\) as processes on the product space that do not depend on \(\omega'\), \(M\) remains a strong \(C\)-clocked \(\widetilde\cP\)-e-process.
\end{lemma}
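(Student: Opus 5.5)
The plan is to reduce, via \Cref{lem:localization}, to bounded stopping times \(\tau\) of the product filtration \(\widetilde\cF_n=\cF_n\otimes\cF_n'\). Then we show \(\E_{P\otimes Q}[M_\tau]\leq\E_{P\otimes Q}[C_\tau]\) by conditioning on the auxiliary coordinate \(\omega'\). The natural idea is that freezing \(\omega'\) turns \(\tau\) into a stopping time of the original filtration. That is not quite true, because \(\{\tau=n\}\) may depend on \(\omega'\) through \(\cF_n'\), which is revealed progressively. So the section \(\tau(\cdot,\omega')\) is only a stopping time for \(\cF_n\) once \(\omega'\) is fixed in its entirety. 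Since \(\omega'\) is independent of \(\omega\), fixing it does not disturb \(P\). I will therefore argue sectionwise.

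First fix \(\omega'\in\Omega'\) and define \(\tau^{\omega'}(\omega):=\tau(\omega,\omega')\). For each \(n\), the set \(\{\tau\leq n\}\) lies in \(\cF_n\otimes\cF_n'\). The \(\omega'\)-section of a product-measurable set is measurable in the first factor, so \(\{\omega:\tau^{\omega'}(\omega)\leq n\}\in\cF_n\). Hence \(\tau^{\omega'}\) is a bounded \((\cF_n)\)-stopping time for every \(\omega'\). This is the standard section argument: the sets \(A\) in \(\cF_n\otimes\cF_n'\) whose sections all lie in \(\cF_n\) form a \(\sigma\)-algebra containing the rectangles.

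Second, because \(M\) and \(C\) do not depend on \(\omega'\), we have \(M_\tau(\omega,\omega')=M_{\tau^{\omega'}}(\omega)\), and likewise for \(C\). Applying the strong property on the original space to \(\tau^{\omega'}\) gives
\[
\int M_{\tau^{\omega'}}\,dP\leq\int C_{\tau^{\omega'}}\,dP
\]
for every \(\omega'\).

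Third, Tonelli's theorem, using nonnegativity and joint measurability of \((\omega,\omega')\mapsto M_{\tau(\omega,\omega')}(\omega)=\sum_n M_n(\omega)\1\{\tau=n\}\), gives
\[
\E_{P\otimes Q}[M_\tau]=\int\!\Bigl(\int M_{\tau^{\omega'}}\,dP\Bigr)dQ(\omega')\leq\int\!\Bigl(\int C_{\tau^{\omega'}}\,dP\Bigr)dQ(\omega')=\E_{P\otimes Q}[C_\tau].
\]
\Cref{lem:localization} then extends the inequality to all stopping times.

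I also need to check that \(M\) and \(C\) remain adapted and nonnegative on the product space, which is immediate, and that \(C\) is still a nondecreasing clock, which is also immediate.

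The main obstacle is the first step: the sectioning must be done at the level of the full \(\omega'\), not of \(\cF_n'\)-information. Conditioning on \(\cF_\infty'\) is harmless precisely because of independence under the product measure \(P\otimes Q\). I would verify the measurability of sections carefully via the monotone class argument sketched above, and I would note that no regularity of \(\Omega'\) is needed.
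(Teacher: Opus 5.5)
Your proposal is correct and is essentially the paper's proof. You fix \(\omega'\), note that the section of \(\{\tau\leq n\}\in\cF_n\otimes\cF_n'\) lies in \(\cF_n\), so \(\tau(\cdot,\omega')\) is an \((\cF_n)\)-stopping time, apply clocked validity fiberwise, and integrate over \(Q\) with Tonelli. The only differences are cosmetic: you first reduce to bounded horizons via \Cref{lem:localization}, whereas the paper treats arbitrary \(\tau\) directly, and your caveat that freezing \(\omega'\) is ``not quite'' enough is unnecessary, since fixing \(\omega'\) entirely is exactly what the paper's sectioning does.
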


\begin{proof}
Let \(\tau\) be a stopping time for the product filtration. For each fixed \(\omega'\), the section \(\tau_{\omega'}(\omega):=\tau(\omega,\omega')\) is an \((\cF_n)\)-stopping time, because sections of \(\{\tau\leq n\}\in\cF_n\otimes\cF_n'\) belong to \(\cF_n\). Strong clocked validity holds fiberwise, and Tonelli's theorem gives
\[
\E_{P\otimes Q}[M_\tau]
=\int \E_P[M_{\tau_{\omega'}}]Q(d\omega')
\leq\int \E_P[C_{\tau_{\omega'}}]Q(d\omega')
=\E_{P\otimes Q}[C_\tau].
\]
\end{proof}

\begin{proposition}[Exponential-clock ARL pinning]\label{prop:exp-clock-pinning}
Let \(M\) be a strong \(\cP\)-e-detector on \((\Omega,\cF,(\cF_n))\). On an auxiliary filtered space, let \(U_1,U_2,\ldots\) be i.i.d. \(\operatorname{Exp}(1)\) under \(Q\), revealed sequentially, and use the product filtration and product null class
\[
\widetilde\cP:=\{P\otimes Q:P\in\cP\}.
\]
Define
\[
B_n:=\sum_{j=1}^n U_j,
\qquad
\widetilde M_n^{(\rho)}:=(1-\rho)M_n+\rho B_n,
\qquad \rho\in(0,1].
\]
Then the pulled-back process \(M\), the auxiliary clock \(B\), and \(\widetilde M^{(\rho)}\) are strong \(\widetilde\cP\)-e-detectors. Moreover, for every \(\gamma>0\) and every \(\widetilde P=P\otimes Q\in\widetilde\cP\),
\begin{equation}\label{eq:exp-clock-pinning}
\gamma
\leq \E_{\widetilde P}\!\left[T_\gamma\!\left(\widetilde M^{(\rho)}\right)\right]
\leq \frac{\gamma}{\rho}+1.
\end{equation}
In particular, taking \(\rho=(1+\eta)^{-1}\) gives
\[
\gamma
\leq \E_{\widetilde P}\!\left[T_\gamma\!\left(\widetilde M^{(\rho)}\right)\right]
\leq (1+\eta)\gamma+1.
\]
\end{proposition}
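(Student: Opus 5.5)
The proof has three parts: strong validity of the three processes, the lower bound in \eqref{eq:exp-clock-pinning}, and the upper bound.

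\emph{Strong validity.} The pulled-back process \(M\) is a strong \(\widetilde\cP\)-e-detector by \Cref{lem:independent-enlargement} with the calendar clock \(C_n=n\), which does not depend on \(\omega\) or \(\omega'\).

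For the auxiliary clock \(B\), note that \(B_n-n\) is a \(\widetilde P\)-martingale for the product filtration. Indeed, \(U_{n+1}\) is independent of \(\widetilde\cF_n\) and has mean one, since the \(U_j\) are revealed sequentially and the two coordinates are independent under \(P\otimes Q\). Optional stopping at a bounded stopping time \(\tau\) then gives \(\E_{\widetilde P}[B_\tau]=\E_{\widetilde P}[\tau]\). \Cref{lem:localization} extends this from bounded \(\tau\) to arbitrary \(\tau\). Also \(B_0=0\) and \(B\ge0\), so \(B\) is a strong e-detector.

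Finally, \(\widetilde M^{(\rho)}\) is a convex combination of two strong e-detectors on the same space. It is therefore strong by \Cref{prop:convexity-separation}(a), applied with a two-point mixing measure.

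\emph{Lower bound.} By \Cref{thm:weak-soundness} (or \citet[Theorem~2.4]{shin2024}), since every strong e-detector is weak, \(\E_{\widetilde P}[T_\gamma(\widetilde M^{(\rho)})]\ge\gamma\).

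\emph{Upper bound.} Since \(M\ge0\), we have \(\widetilde M^{(\rho)}_n\ge\rho B_n\) pathwise. Hence
\[
T_\gamma(\widetilde M^{(\rho)})\le S:=\inf\{n\ge1:B_n\ge\gamma/\rho\}.
\]
It then suffices to bound \(\E_Q[S]\), a pure renewal quantity.

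Let \(N(t)\) be the Poisson process with unit-rate exponential interarrivals \(U_j\), and set \(t=\gamma/\rho\). Then
\[
S=\inf\{n\ge1:B_n\ge t\}=N(t^-)+1.
\]
For \(t>0\), strictness at \(t\) occurs with probability zero, so this equals \(N(t)+1\) almost surely. Therefore
\[
\E[S]=t+1=\gamma/\rho+1.
\]
Alternatively, without Poisson facts, apply Wald's identity \(\E[B_S]=\E[S]\) and the memorylessness of the exponential distribution: the overshoot \(B_S-t\) is distributed as \(\mathrm{Exp}(1)\), so \(\E[B_S]=t+1\). Wald's identity needs \(\E[S]<\infty\); this follows from truncating at \(S\wedge N\) and using monotone convergence.

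This completes the upper bound, and the displayed corollary follows by substituting \(\rho=(1+\eta)^{-1}\).

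The main obstacle is the strong validity of \(B\). One must be careful that stopping times for the product filtration may use both coordinates. The martingale property of \(B_n-n\) holds because \(U_{n+1}\) is independent of \(\cF_{n+1}\otimes\cF'_n\supseteq\widetilde\cF_n\) under the product law. This requires the auxiliary filtration to reveal \(U_{n+1}\) only at time \(n+1\); I will state this explicitly as the meaning of ``revealed sequentially''. The renewal computation is routine.
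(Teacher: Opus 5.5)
Your proof is correct and follows the paper's argument essentially step for step: independent enlargement for \(M\); exact validity of \(B\) at bounded horizons followed by localization and convexity; soundness for the lower bound; and, for the upper bound, domination by the first passage of \(B\) over \(\gamma/\rho\) together with the Poisson identity \(\E[S]=\gamma/\rho+1\). Your optional-stopping argument for the martingale \(B_n-n\) just repackages the paper's Tonelli computation \(\E[B_\tau]=\sum_n\widetilde P(\tau\geq n)\), and your alternative Wald-plus-memorylessness route for \(\E[S]\) is also valid, since monotone convergence gives \(\E[S]=\E[B_S]\) directly without assuming integrability in advance.
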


\begin{proof}
By \Cref{lem:independent-enlargement}, \(M\) remains strong on the product filtration. It is enough to verify validity of \(B\) at bounded stopping times. If \(\tau\leq N\), then \(\{\tau\geq n\}\in\widetilde\cF_{n-1}\), so it is independent of the fresh increment \(U_n\). Tonelli's theorem yields
\[
\E_{\widetilde P}[B_\tau]
=\sum_{n=1}^N\E_{\widetilde P}[U_n\1\{\tau\geq n\}]
=\sum_{n=1}^N\widetilde P(\tau\geq n)
=\E_{\widetilde P}[\tau].
\]
Thus \(B\) is strong by \Cref{lem:localization}, and \(\widetilde M^{(\rho)}\) is strong by \Cref{prop:convexity-separation}. The lower bound in \eqref{eq:exp-clock-pinning} follows from strong-e-detector soundness. Since \(M\geq0\),
\[
T_\gamma\!\left(\widetilde M^{(\rho)}\right)
\leq
S_{\gamma/\rho}
:=\inf\{n\geq1:B_n\geq\gamma/\rho\}.
\]
The partial sums \(B_n\) are the arrival times of a unit-rate Poisson process, so \(S_x\) has the same law as \(1+\operatorname{Poisson}(x)\), and hence \(\E[S_x]=x+1\). This proves the upper bound.
\end{proof}


Since \(B_n\uparrow\infty\) almost surely,
\[
T_\gamma\!\left(\widetilde M^{(\rho)}\right)
\leq S_{\gamma/\rho}<\infty
\qquad\widetilde P\text{-almost surely},
\]
and more precisely
\[
\widetilde P\!\left(T_\gamma\!\left(\widetilde M^{(\rho)}\right)>n\right)
\leq Q(B_n<\gamma/\rho).
\]
Thus the auxiliary clock deliberately forces an eventual null alarm, even on paths where the data-driven component would never cross. 

\subsection{A universality result for ARL control}

We now obtain the exact universality theorem for ordinary ARL control.

\begin{theorem}[Universality of weak e-detectors]\label{thm:weak-universality}
Fix \(b>0\) and a stopping time \(T\). The following statements are equivalent.

\begin{enumerate}[label=\textup{(\roman*)},leftmargin=2.4em]
\item \(\inf_{P\in\cP}\E_P[T]\geq b\).
\item There exists a weak \(\cP\)-e-detector \(M\) such that \(T=T_b(M)\).
\end{enumerate}

When \textup{(i)} holds, one may take the canonical weak e-detector
\begin{equation}\label{eq:canonical-weak}
M_n^{T,b}:=b\1\{T\leq n\},\qquad n\geq0.
\end{equation}
\end{theorem}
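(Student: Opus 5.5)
This is the specialization $C_n=n$ of \Cref{thm:weak-clocked-universality}. The plan is to check that the calendar clock is proper, so that the theorem can be invoked directly. I will also write out both directions by hand, since the argument is short and the boundary cases deserve care.

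First, $C_n=n$ is adapted, nonnegative, nondecreasing, satisfies $C_0=0$, and tends to $\infty$ deterministically, so it is proper in the sense of \eqref{eq:proper-clock}. At any stopping time $\tau$, $C_\tau=\lim_N \tau\wedge N=\tau\in[0,\infty]$. Definition~\ref{def:weak-clocked} with this clock is then literally \Cref{def:weak}, and condition \eqref{eq:expected-clock-control} is literally (i). The equivalence, together with the canonical witness \eqref{eq:canonical-weak}, follows from \Cref{thm:weak-clocked-universality}.

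For a self-contained argument, the direction (ii)$\Rightarrow$(i) is \Cref{thm:weak-soundness} applied at $c=b$, using $T=T_b(M)$. That proof handles the case $\E_P[T]=\infty$ trivially. When $\E_P[T]<\infty$, the time $T$ is finite a.s., so the killed value $M^\dagger_T$ equals $M_T\geq b$.

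For (i)$\Rightarrow$(ii), take $M_n=b\1\{T\leq n\}$. This process is adapted because $T$ is a stopping time, and $M_0=0$ because alarm times take values in $\N\cup\{\infty\}$. For $0<c\leq b$ the process first reaches $c$ exactly at $T$; here the convention $T\geq1$ matters, since $T_c$ is defined as an infimum over $n\geq1$. In this range $\E_P[M^\dagger_{T}]=bP(T<\infty)\leq b\leq\E_P[T]$ by (i). For $c>b$ the process never crosses, so $T_c=\infty$, the killed value is $0$, and \eqref{eq:weak-def} holds trivially. Hence $M$ is weak and $T_b(M)=T$.

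The only delicate points are conventions rather than real obstacles. The first is using the killed value $M^\dagger$, which is what makes the $c>b$ case and the event $\{T=\infty\}$ harmless. The second is the requirement that alarms occur at times $\geq1$, which guarantees that the crossing time of the canonical witness is exactly $T$.
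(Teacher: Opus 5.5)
Your proof is correct and takes essentially the same route as the paper. Soundness is \Cref{thm:weak-soundness} at level $b$, and the canonical indicator $b\1\{T\leq n\}$ is checked separately for $0<c\leq b$ (crossing exactly at $T$) and for $c>b$ (never crossing). Your chain $\E_P[M_T^\dagger]=bP(T<\infty)\leq b\leq\E_P[T]$ simply merges the paper's two cases $\E_P[T]=\infty$ and $\E_P[T]<\infty$, and your reduction to \Cref{thm:weak-clocked-universality} with $C_n=n$ is the specialization the paper itself remarks on.
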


\begin{proof}
The implication \textup{(ii)}\(\Rightarrow\)\textup{(i)} is \Cref{thm:weak-soundness}.

For the converse, assume \textup{(i)} and define \(M^{T,b}\) by \eqref{eq:canonical-weak}. Adaptedness follows from the stopping-time property of \(T\). Pathwise, \(M_n^{T,b}=0\) before \(T\) and equals \(b\) from \(T\) onward, so its level-\(b\) crossing time is exactly \(T\).

It remains to verify weak validity at an arbitrary level \(c>0\). If \(c>b\), the process never reaches \(c\), so \(T_c(M^{T,b})=\infty\) and \eqref{eq:weak-def} is automatic. If \(0<c\leq b\), then \(T_c(M^{T,b})=T\). Fix \(P\in\cP\). If \(\E_P[T]=\infty\), the desired inequality is again automatic. If \(\E_P[T]<\infty\), then \(T<\infty\) almost surely and
\[
\E_P\bigl[M_{T_c(M^{T,b})}^{T,b}\bigr]
=b
\leq \E_P[T]
=\E_P\bigl[T_c(M^{T,b})\bigr].
\]
Thus \(M^{T,b}\) is weak.
\end{proof}

Several points about \Cref{thm:weak-universality} are worth recording.

\begin{remark}[Level-free validity, scale-specific representation]\label{rem:level-free}
The weak definition is level-free: one process must certify ARL at every threshold it crosses. The canonical witness for a target scale \(b\), however, is intentionally simple. It represents \(T\) at level \(b\), represents the same \(T\) at all lower levels, and never crosses a higher level. The theorem is therefore a universality statement at each target scale; it does not claim that an arbitrary family of stopping rules indexed by \(b\) can be represented by one common process.
\end{remark}

\begin{remark}[Overshoots]\label{rem:overshoot}
The weak condition uses the actual stopped value \(M_{T_c(M)}\), not merely the threshold \(c\). This accounts automatically for overshoot. The canonical indicator witness has no overshoot, but practical weak or strong e-detectors may.
\end{remark}

\begin{corollary}[Early-tail restrictions preclude ARL universality]\label{cor:anytime-precludes}
For every \(b>1\) and \(\gamma<1\), there exist a filtered probability space and an alarm time \(T\) such that
\[
\E[T]\geq b
\qquad\text{but}\qquad
P(T=1)>\gamma.
\]
Consequently, no representation principle that imposes the same nontrivial upper bound \(P(T=1)\leq\gamma\) on every filtered probability space can be universal for ARL control.
\end{corollary}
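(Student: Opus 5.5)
The plan is to generalize the lottery of \Cref{ex:early-lottery} so that the early-alarm probability can be pushed above any prescribed \(\gamma<1\) while the mean stays at least \(b\). Fix \(b>1\) and \(\gamma<1\), and choose \(p\in(\gamma,1)\). On a probability space carrying \(Y\sim\operatorname{Bernoulli}(p)\), take the filtration \(\cF_0\) trivial and \(\cF_n=\sigma(Y)\) for \(n\geq1\), so that \(Y\) is revealed at time one. Then define
\[
T:=\begin{cases}1,&Y=1,\\ L,&Y=0,\end{cases}
\]
where \(L\in\N\) is a deterministic integer still to be chosen. Since \(\{T\leq n\}\) is either empty or determined by \(Y\) for every \(n\geq1\), and \(T\geq1\), the random time \(T\) is a genuine alarm time.

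The only computation is \(\E[T]=p+(1-p)L\). Because \(1-p>0\), we can pick \(L\geq (b-p)/(1-p)\), which makes \(\E[T]\geq b\). At the same time \(P(T=1)=p>\gamma\), which gives the first claim. The mean constraint never competes with the early mass, because all of the remaining probability \(1-p\) can be pushed arbitrarily far out. This is exactly the front-loading freedom that the introduction attributes to ARL control.

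For the consequence, suppose some representation principle imposed \(P(T=1)\leq\gamma\) on every alarm time it represents, on every filtered probability space. By \Cref{thm:weak-universality}, a universal principle for ARL would have to represent every \(T\) with \(\E[T]\geq b\), including the one just constructed. That \(T\) violates the bound, which is a contradiction. For strong e-detectors this recovers \Cref{cor:no-strong-univ} with \(\gamma=1/b\), via \Cref{cor:linear-tail}.

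There is no real obstacle here. The only point needing care is to word the second claim as a contrapositive: the principle is assumed to impose a single bound \(\gamma<1\) that is uniform over spaces, and it is this uniformity that lets one counterexample space defeat it.
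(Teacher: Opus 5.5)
Your proposal is correct and matches the paper's proof: the same Bernoulli lottery with \(p\in(\gamma,1)\) revealed at time one, and \(T=L\) on the complementary branch for an integer \(L\) large enough that \(\E[T]\geq b\). Your explicit choice \(L\geq(b-p)/(1-p)\) and your contrapositive phrasing of the consequence only make explicit what the paper leaves implicit.
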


\begin{proof}
Take a space supporting a Bernoulli variable revealed at time one. Choose \(p\in(\gamma,1)\), let \(T=1\) with probability \(p\), and let \(T=L\) otherwise for an integer \(L\) large enough that \(\E[T]\geq b\).
\end{proof}

The corollary rules out any universal class that retains a nontrivial common upper bound on first-step false-alarm probability. It does not make the weak definition logically minimal. At a fixed target scale \(b\), imposing the own-crossing inequality only at level \(b\) would already suffice for the canonical representation. Requiring it at every level instead gives one process that is sound at every threshold, as in \Cref{thm:weak-soundness}, without sacrificing scale-by-scale universality.

\section{Pros and cons of weak versus strong detectors}

\begin{example}[A sharp fast-start Gaussian tradeoff]\label{ex:fast-start-shewhart}
Suppose that under the no-change law \(X_1,X_2,\ldots\) are i.i.d. \(N(0,1)\). Fix \(b\geq2\) and \(p\in(1/b,1)\). At time one, raise an alarm when
\[
X_1\geq a:=\Phi^{-1}(1-p).
\]
Conditional on no alarm at time one, use the more conservative threshold
\[
c:=\Phi^{-1}(1-q),
\qquad
q:=\frac{1-p}{b-1},
\]
and raise an alarm at the first \(n\geq2\) with \(X_n\geq c\). The resulting alarm time $T$ is a time-inhomogeneous Shewhart chart. Its null run length satisfies
\[
P_\infty(T=1)=p,
\qquad
P_\infty(T\geq n)=(1-p)(1-q)^{n-2}\quad(n\geq2),
\]
and therefore
\[
\E_\infty[T]
=1+\frac{1-p}{q}
=b.
\]
Because \(p>1/b\), the chart violates \Cref{cor:linear-tail} and has no strong e-detector representation at level \(b\), although it has an exact weak representation. In fact, its maximal strong scale can be computed exactly in the full data filtration. For any integrable stopping time \(\sigma\), the conditional null hazard is \(p\) at time one and \(q\) thereafter, so
\begin{align*}
P_\infty(T\leq\sigma)
&=pP_\infty(\sigma\geq1)
  +q\sum_{n=2}^{\infty}P_\infty(\sigma\geq n,\,T\geq n)\\
&\leq p\sum_{n=1}^{\infty}P_\infty(\sigma\geq n)
=p\E_\infty[\sigma],
\end{align*}
because \(q<p\). Hence \(b_{P_\infty}^*(T)\geq1/p\), while \(\sigma\equiv1\) attains equality. Therefore
\begin{equation}\label{eq:fast-start-strong-scale}
b_{P_\infty}^*(T)=\frac1p.
\end{equation}
Thus a chart with ARL \(b\) can be strongly representable only at the much smaller scale \(1/p\).

For an immediate shift to \(N(\mu,1)\), \(\mu>0\), define
\[
\beta_\mu(r)
:=1-\Phi\!\left(\Phi^{-1}(1-r)-\mu\right),
\]
the power of the most powerful size-\(r\) test of \(N(0,1)\) against \(N(\mu,1)\). Every strongly representable rule at scale \(b\) obeys \(P_\infty(T=1)\leq1/b\). Hence, when the time-one information consists of \(X_1\) and an independent randomizer, the Neyman--Pearson lemma \citep{neymanpearson1933} gives
\begin{equation}\label{eq:strong-first-step-power}
P_{1,\mu}(T=1)\leq \beta_\mu(1/b).
\end{equation}
The fast-start chart has first-observation power \(\beta_\mu(p)>\beta_\mu(1/b)\). The stationary Shewhart chart with null hazard \(1/b\) attains the upper bound in \eqref{eq:strong-first-step-power}, so \(\beta_\mu(1/b)\) is exactly the maximum first-observation power over the strong class in this experiment.

The same calculation identifies the price under Lorden's criterion. Let \(P_{\nu,\mu}\) denote the law with a change to \(N(\mu,1)\) at time \(\nu\), and write
\[
\mathcal D_{\mathrm L}(T)
:=\sup_{\nu\geq1}\operatorname*{ess\,sup}
\E_{\nu,\mu}\!\left[(T-\nu+1)^+\mid\cF_{\nu-1}\right].
\]
Since \(p>1/b\),
\[
q=\frac{1-p}{b-1}<\frac1b,
\]
so the post-change success probability after time one is \(\beta_\mu(q)<\beta_\mu(1/b)\). Conditional on survival to any changepoint \(\nu\geq2\), the remaining delay is geometric with mean \(1/\beta_\mu(q)\). At \(\nu=1\), its mean is
\[
1+\frac{1-\beta_\mu(p)}{\beta_\mu(q)}
\leq \frac1{\beta_\mu(q)}.
\]
Consequently the Lorden worst-case delay of the fast-start chart is
\begin{equation}\label{eq:fast-lorden}
\mathcal D_{\mathrm L}(T)=\frac1{\beta_\mu(q)}
>\frac1{\beta_\mu(1/b)},
\end{equation}
where the right-hand side is the Lorden delay of the stationary strong chart. 

In summary, when comparing the same pair of charts, \emph{the extra freedom provided by the weak e-detector can strictly improve immediate-change power, but it strictly worsens the minimax criterion.}
For example, take \(b=100\), \(p=0.1\), and \(\mu=1\). Then \(a\approx1.282\), \(q\approx0.00909\), and \(c\approx2.362\). The immediate-change power is
\[
\beta_1(0.1)\approx0.389,
\]
whereas the maximum over all strongly representable rules is
\[
\beta_1(0.01)\approx0.0924.
\]
The continuation power is \(\beta_1(q)\approx0.0866\), giving Lorden delay approximately \(11.5\), compared with approximately \(10.8\) for the stationary strong chart. This is a precise startup-versus-minimax tradeoff, closely related to the fast-initial-response and head-start principles studied for CUSUM charts by \citet{lucascrosier1982}. The weak rule can approach perfect first-observation power: as \(p\uparrow1\), the power \(\beta_\mu(p)\uparrow1\), while every scale-\(b\) strong rule remains capped at the fixed value \(\beta_\mu(1/b)\). The price is explicit: \(q\downarrow0\), so the Lorden delay in \eqref{eq:fast-lorden} diverges.
\end{example}


The Gaussian example proves that the weak--strong gap is statistically active for immediate-change power, while also showing that this particular use of the extra freedom is counterproductive under Lorden's minimax criterion. 

The phenomenon is completely general. For any \(b>1\) and any \(p\in(1/b,1)\), choose a sufficiently large integer \(L\) and let \(T=1\) with probability \(p\) and \(T=L\) otherwise. Then \(\E[T]\geq b\), while \(P(T=1)=p>1/b\). The large late value subsidizes an arbitrarily aggressive early false-alarm probability.

\section{Own level-crossing validity does not recover optional validity}\label{sec:eprocess-analogy}

One might wonder whether restricting attention to a process's own level crossings is somehow sufficient to recover an all-stopping-times property. It is not. The distinction already appears for e-processes in a two-point example.

Call a nonnegative adapted process \(E=(E_n)_{n\geq0}\), with \(E_0=1\), an \emph{own-crossing e-certificate}. The normalization \(E_0=1\) is conventional for this comparison; ordinary e-processes, including the canonical indicator process in \eqref{eq:canonical-eprocess-intro}, need only satisfy \(\E[E_0]\leq1\) and may start at zero. We require that, for every \(c>1\),
\begin{equation}\label{eq:own-crossing-e}
\E_P\bigl[E_{\tau_c(E)}^\dagger\bigr]\leq1,
\qquad
\tau_c(E):=\inf\{n\geq1:E_n\geq c\}.
\end{equation}
Every e-process is an own-crossing e-certificate, because the e-process inequality may be evaluated at \(\tau_c(E)\). The converse fails.

\begin{example}[Own crossings miss a fixed-time violation]\label{ex:own-crossing-e}
Let \(Y\sim\mathrm{Bernoulli}(1/2)\), reveal \(Y\) at time one, set \(E_0=1\), and for every \(n\geq1\) let
\[
E_n:=
\begin{cases}
1.3,&Y=1,\\
0.9,&Y=0.
\end{cases}
\]
For \(1<c\leq1.3\), the process crosses \(c\) at time one only on \(\{Y=1\}\), so
\[
\E[E_{\tau_c(E)}^\dagger]=1.3\cdot\frac12=0.65\leq1.
\]
For \(c>1.3\), it never crosses and the left side is zero. Hence \(E\) is an own-crossing e-certificate. It is not an e-process, because the deterministic stopping time \(\tau\equiv1\) gives
\[
\E[E_1]=\frac12(1.3)+\frac12(0.9)=1.1>1.
\]
\end{example}

The own-crossing condition ignores the branch on which the process remains below the threshold. A fixed-time expectation sees both branches. The same mechanism separates weak from strong e-detectors: weak validity controls precisely the stopped values needed for the process's own ARL guarantees, while strong validity controls every way an observer might choose a horizon after seeing the data.

This example also clarifies the nature of the weak universality theorem. Weak e-detectors are not an alternative characterization of strong e-detectors, just as own-crossing e-certificates are not an alternative characterization of e-processes. They are the exact objects associated with a weaker target guarantee.

\section{The exact universality class of strong e-detectors}\label{sec:strong-universality}

\Cref{prop:strong-optional} gives a necessary condition for representability by a strong e-detector. It is also sufficient.

\begin{definition}[Optional-horizon linear false-alarm control]\label{def:oh}
Fix \(b>0\). A stopping time \(T\) has \emph{optional-horizon linear false-alarm control at scale \(b\)} over \(\cP\) if, for every \(P\in\cP\) and every stopping time \(\sigma\),
\begin{equation}\label{eq:oh-def}
P(T\leq\sigma)\leq \frac{\E_P[\sigma]}{b}.
\end{equation}
\end{definition}

The terminology in \Cref{def:oh} is descriptive rather than standard. The key feature is that the comparison horizon may be data-dependent. Taking \(\sigma\equiv t\) gives the deterministic linear tail inequality
\begin{equation}\label{eq:det-tail}
P(T\leq t)\leq \frac{t}{b},\qquad t\in\N.
\end{equation}
Taking \(\sigma=T\) shows that optional-horizon control includes ordinary ARL control.

\begin{proposition}[Optional-horizon control implies ARL]\label{prop:oh-arl}
If \(T\) satisfies \eqref{eq:oh-def}, then
\[
\inf_{P\in\cP}\E_P[T]\geq b.
\]
\end{proposition}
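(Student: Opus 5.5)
The plan is to plug the alarm time itself in as the horizon, \(\sigma=T\). Fix \(P\in\cP\). If \(\E_P[T]=\infty\), the bound \(\E_P[T]\geq b\) holds trivially, so we may assume \(\E_P[T]<\infty\). Then \(T<\infty\) \(P\)-almost surely. Recall the convention that \(\{T\leq\sigma\}\) means \(\{T<\infty,\,T\leq\sigma\}\). With \(\sigma=T\), this event therefore has probability \(P(T<\infty)=1\).

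Next we apply \eqref{eq:oh-def} at \(\sigma=T\). This is a legitimate stopping horizon, since \Cref{def:oh} allows every stopping time. It gives \(1=P(T\leq T)\leq\E_P[T]/b\), that is, \(\E_P[T]\geq b\). Taking the infimum over \(P\in\cP\) proves the claim.

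An alternative route is also available. One can invoke the strong-representation direction (the canonical witness \(b\1\{T\leq n\}\) is a strong e-detector, by the specialization of \Cref{thm:clocked-strong-universality} with \(C_n=n\)). Then one can appeal to ARL soundness of strong e-detectors, \citet[Theorem~2.4]{shin2024}, or equivalently to \Cref{thm:weak-soundness}, since strong implies weak. The direct argument above is simpler, however.

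The only point needing care is the case split on integrability. Without \(\E_P[T]<\infty\), one cannot conclude \(P(T<\infty)=1\), and the event \(\{T\leq T\}\) could have probability less than one. That case is exactly the one in which the conclusion is automatic, so no real obstacle arises.
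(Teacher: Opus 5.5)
Your proof is correct and is essentially the paper's own argument: fix \(P\), dispose of the case \(\E_P[T]=\infty\), and otherwise take \(\sigma=T\) in \eqref{eq:oh-def}, using \(P(T\leq T)=1\) to get \(1\leq\E_P[T]/b\). Your alternative route through the canonical witness and ARL soundness is also valid and not circular, but the direct substitution is exactly what the paper uses.
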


\begin{proof}
Fix \(P\). If \(\E_P[T]=\infty\), the claim is immediate. Otherwise, \(T\) is an admissible choice of \(\sigma\) and \(P(T\leq T)=1\), so \(1\leq \E_P[T]/b\).
\end{proof}

We can now state the second exact representation theorem.

\begin{theorem}[Universality of strong e-detectors]\label{thm:strong-universality}
Fix \(b>0\) and a stopping time \(T\). The following statements are equivalent.

\begin{enumerate}[label=\textup{(\roman*)},leftmargin=2.4em]
\item \(T\) has optional-horizon linear false-alarm control at scale \(b\), as in \eqref{eq:oh-def}.
\item There exists a strong \(\cP\)-e-detector \(M\) such that \(T=T_b(M)\).
\end{enumerate}

When \textup{(i)} holds, the canonical process
\begin{equation}\label{eq:canonical-strong}
M_n^{T,b}:=b\1\{T\leq n\}
\end{equation}
is a strong \(\cP\)-e-detector and crosses level \(b\) exactly at \(T\).
\end{theorem}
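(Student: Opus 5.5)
This theorem is the specialization of \Cref{thm:clocked-strong-universality} to the calendar clock \(C_n=n\). I would still write out the two directions directly, since each is only a few lines.

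For \textup{(ii)}\(\Rightarrow\)\textup{(i)}, I would invoke \Cref{prop:strong-optional}. Given a strong e-detector \(M\) with \(T=T_b(M)\), that proposition gives \(P(T\leq\sigma)\leq\E_P[\sigma]/b\) for every \(P\in\cP\) and every stopping time \(\sigma\). The argument there uses the horizon \(\rho=T\wedge\sigma\): on \(\{T\leq\sigma\}\) one has \(M_\rho\geq b\), and then \(\E_P[M_\rho]\leq\E_P[\rho]\leq\E_P[\sigma]\). This uses only the strong e-detector inequality at \(\rho\), which is valid for arbitrary (possibly infinite) horizons by \Cref{lem:localization}. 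On \(\{\sigma=\infty\}\) the event \(\{T\leq\sigma\}\) means \(\{T<\infty\}\), and \(\E_P[\sigma]=\infty\) whenever \(P(\sigma=\infty)>0\), so the extended interpretation causes no difficulty.

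For \textup{(i)}\(\Rightarrow\)\textup{(ii)}, I would take the canonical process \(M^{T,b}_n=b\1\{T\leq n\}\).
\begin{itemize}
\item It is adapted because \(T\) is a stopping time, and it is nonnegative.
\item \(M^{T,b}_0=0\), because alarm times take values in \(\N\cup\{\infty\}\), so \(T\geq1\).
\item Pathwise it equals \(0\) before \(T\) and \(b\) from \(T\) on, so \(T_b(M^{T,b})=T\). The convention \(T\geq1\) matters here, since level crossings are searched over \(n\geq1\).
\end{itemize}
For validity, fix \(P\) and a stopping time \(\tau\). By \Cref{lem:localization} it suffices to treat bounded \(\tau\). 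For such \(\tau\), \(M^{T,b}_\tau=b\1\{T\leq\tau\}\), so \(\E_P[M^{T,b}_\tau]=bP(T\leq\tau)\). Applying \textup{(i)} with \(\sigma=\tau\) gives \(\E_P[M^{T,b}_\tau]\leq\E_P[\tau]\).

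For unbounded \(\tau\), I would also check that the liminf convention agrees: \(\liminf_N b\1\{T\leq\tau\wedge N\}=b\1\{T\leq\tau,\,T<\infty\}\). This matches the paper's convention for \(\{T\leq\sigma\}\), so the identity holds directly there too, but localization makes the check unnecessary.

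I do not expect a genuine obstacle. The only points needing care are the boundary conventions: \(M_0=0\) (which needs \(T\geq1\)), and the meaning of \(\{T\leq\sigma\}\) and of \(M_\tau\) when horizons are infinite. Both are handled by the conventions fixed in Section~\ref{sec:background} and by \Cref{lem:localization}.
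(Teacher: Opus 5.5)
Your proposal is correct and follows the paper's proof: (ii)\(\Rightarrow\)(i) is \Cref{prop:strong-optional}, and (i)\(\Rightarrow\)(ii) uses the canonical indicator \(b\1\{T\leq n\}\). The only cosmetic difference is that the paper checks validity directly at integrable horizons, where \(\tau<\infty\) almost surely, whereas you check bounded horizons and then invoke \Cref{lem:localization}. Your explicit remark that \(M_0^{T,b}=0\) and \(T_b(M^{T,b})=T\) both rely on the convention \(T\geq1\) is a detail the paper leaves implicit.
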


\begin{proof}
The implication \textup{(ii)}\(\Rightarrow\)\textup{(i)} is \Cref{prop:strong-optional}.

For the converse, assume \textup{(i)} and define \(M^{T,b}\) by \eqref{eq:canonical-strong}. As in the proof of \Cref{thm:weak-universality}, the level-\(b\) crossing time is exactly \(T\). Now fix \(P\in\cP\) and an integrable stopping time \(\tau\). Since \(\tau<\infty\) almost surely,
\[
M_\tau^{T,b}=b\1\{T\leq\tau\}.
\]
Therefore, by optional-horizon control,
\[
\E_P[M_\tau^{T,b}]
=bP(T\leq\tau)
\leq \E_P[\tau].
\]
Thus \(M^{T,b}\) is strong.
\end{proof}

The theorem identifies the precise ``tail bound in addition to ARL'' that recovers the original e-detector. Importantly, the deterministic family \eqref{eq:det-tail} is not sufficient; one needs its optional-horizon strengthening.

\subsection{Optimal stopping and the maximal strong scale}\label{subsec:optimal-stopping}

For a fixed null law \(P\), define the optimal-stopping value
\begin{equation}\label{eq:optimal-stopping-value}
V_b^P(T)
:=
\sup_{\sigma\text{ bounded}}
\E_P\!\left[b\1\{T\leq\sigma\}-\sigma\right].
\end{equation}
The horizon \(\sigma\equiv0\) is admissible and alarm times are positive, so \(V_b^P(T)\geq0\).

\begin{proposition}[Optimal-stopping formulation]\label{prop:optimal-stopping-formulation}
For an alarm time \(T\) and a scale \(b>0\), the following are equivalent for a null law \(P\):
\begin{enumerate}[label=\textup{(\roman*)},leftmargin=2.4em]
\item \(T\) has optional-horizon linear false-alarm control at scale \(b\) under \(P\);
\item \(V_b^P(T)=0\);
\item \(b\leq b_P^*(T)\).
\end{enumerate}
Thus ``stop immediately'' solves the optimal-stopping problem with reward \(b\1\{T\leq n\}-n\) exactly when \(T\) is strongly representable at scale \(b\).
\end{proposition}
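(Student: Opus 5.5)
The plan is to prove (i)\(\Leftrightarrow\)(ii) directly, reducing to bounded horizons, and then obtain (i)\(\Leftrightarrow\)(iii) from \Cref{prop:strong-scale} specialized to the singleton class \(\{P\}\).

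\emph{From (i) to (ii).} Assume (i). For every bounded \(\sigma\) we have \(\E_P[\sigma]<\infty\), and (i) gives \(bP(T\leq\sigma)\leq\E_P[\sigma]\). Hence every term \(\E_P[b\1\{T\leq\sigma\}-\sigma]\) in \eqref{eq:optimal-stopping-value} is nonpositive, so \(V_b^P(T)\leq0\). Combined with the remark that \(\sigma\equiv0\) gives \(V_b^P(T)\geq0\) (since \(T\geq1\)), this yields \(V_b^P(T)=0\).

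\emph{From (ii) to (i).} Assume (ii). Then \(bP(T\leq\sigma)\leq\E_P[\sigma]\) for every bounded \(\sigma\). Equivalently, the canonical process \(M_n=b\1\{T\leq n\}\) satisfies \(\E_P[M_\sigma]\leq\E_P[C_\sigma]\) with \(C_n=n\) for all bounded \(\sigma\). \Cref{lem:localization} then extends this to every stopping time \(\sigma\). Two conventions need checking here:
\begin{itemize}
\item \(M_\sigma=\liminf_N b\1\{T\leq\sigma\wedge N\}\) equals \(b\1\{T<\infty,\,T\leq\sigma\}\), which matches the paper's convention for \(\{T\leq\sigma\}\).
\item \(C_\sigma=\sigma\) holds in \([0,\infty]\).
\end{itemize}
This gives exactly \eqref{eq:oh-def} in the extended sense. (Alternatively, apply the bounded inequality to \(\sigma\wedge N\) and let \(N\to\infty\), using monotone convergence on both sides.) The only delicate point is this passage from bounded to arbitrary horizons. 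It is the step I would write out most carefully, since \(V_b^P\) is defined over bounded \(\sigma\) only, while (i) quantifies over all stopping times.

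\emph{Equivalence with (iii).} Apply \Cref{prop:strong-scale} with \(\cP=\{P\}\): optional-horizon control at scale \(b\) holds if and only if \(b\leq b_P^*(T)\). This is consistent with the fact that the infimum in \eqref{eq:strong-scale} ranges over integrable \(\sigma\) with \(P(T\leq\sigma)>0\). Horizons with \(P(T\leq\sigma)=0\) satisfy (i) trivially, and horizons with \(\E_P[\sigma]=\infty\) satisfy it trivially in the extended sense.

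\emph{Final sentence.} Stopping at \(\sigma\equiv0\) yields reward \(0\), so it is optimal exactly when \(V_b^P(T)=0\). By \Cref{thm:strong-universality}, applied with \(\cP=\{P\}\), this is exactly strong representability of \(T\) at scale \(b\).
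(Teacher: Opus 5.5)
Your proposal is correct and follows essentially the same route as the paper: reduce optional-horizon control to the bounded-horizon inequality \(\E_P[b\1\{T\leq\sigma\}-\sigma]\leq0\) via \Cref{lem:localization}, use the zero horizon to get \(V_b^P(T)\geq0\), and cite \Cref{prop:strong-scale} and \Cref{thm:strong-universality} for (iii) and the final sentence. Your explicit check that \(M_\sigma=b\1\{T<\infty,\,T\leq\sigma\}\) and \(C_\sigma=\sigma\) under the paper's conventions only adds detail that the paper's compressed proof leaves implicit.
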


\begin{proof}
Optional-horizon control is equivalent, by \Cref{lem:localization}, to
\[
\E_P[b\1\{T\leq\sigma\}-\sigma]\leq0
\]
for every bounded stopping time. Since the zero horizon attains zero, this is equivalent to \(V_b^P(T)=0\). Equivalence with \textup{(iii)} is \Cref{prop:strong-scale}, and equivalence with strong representation is \Cref{thm:strong-universality}.
\end{proof}

The quantity \(b_P^*(T)\) is therefore a ratio-form optimal-stopping value. Unlike ARL, it is filtration-sensitive: an observer may lower the ratio by spending monitoring time selectively on histories on which the alarm is likely. For Markovian charts, the numerator and denominator can be handled by standard fractional optimal-stopping or dynamic-programming methods.

\subsection{Which run-length laws are strongly representable?}\label{subsec:run-length-laws}

Optional-horizon validity is a property of a stopping time together with its filtration, not merely of its marginal law. Nevertheless, the law admits an exact characterization when no information is available beyond whether the alarm has already occurred. Define the \emph{minimal alarm filtration}
\begin{equation}\label{eq:minimal-alarm-filtration}
\cG_n^T:=\sigma\bigl(\{T\leq j\}:1\leq j\leq n\bigr),
\qquad n\in\Nzero.
\end{equation}
Before the alarm, the surviving set \(\{T\geq n\}\) is a single atom of \(\cG_{n-1}^T\).

\begin{lemma}[Horizons in the minimal alarm filtration]\label{lem:minimal-horizons}
If \(\sigma\) is a stopping time for \((\cG_n^T)\), then there is a deterministic \(m\in\Nzero\cup\{\infty\}\) such that
\begin{equation}\label{eq:minimal-horizon-form}
\sigma\wedge T=T\wedge m.
\end{equation}
\end{lemma}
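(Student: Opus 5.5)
The plan is to exploit the atomic structure of the minimal alarm filtration. First I would record that for each \(n\in\Nzero\), \(\cG_n^T\) is generated by the finite partition
\[
\{T=1\},\ \{T=2\},\ \ldots,\ \{T=n\},\ \{T>n\},
\]
since the events \(\{T\leq j\}\), \(j\leq n\), are nested and their differences give exactly these atoms. In particular every \(\cG_n^T\)-measurable event is a union of these atoms, and \(\{T>n\}=\{T\geq n+1\}\) is one atom. (If some atoms are null or empty, nothing changes; I would phrase the argument pathwise on the sets themselves rather than up to null sets, so no almost-sure caveat is needed.)

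Next, define the deterministic index
\[
m:=\inf\{n\in\Nzero:\{\sigma=n\}\supseteq\{T>n\}\text{ and }\{T>n\}\neq\varnothing\}\quad\text{or, more simply,}\quad m:=\inf\{n\in\Nzero:\{\sigma\leq n\}\cap\{T>n\}\neq\varnothing\},
\]
with \(\inf\varnothing=\infty\). The key step is: since \(\{\sigma\leq n\}\in\cG_n^T\) and \(\{T>n\}\) is an atom of \(\cG_n^T\), the event \(\{\sigma\leq n\}\) either contains \(\{T>n\}\) or is disjoint from it. Hence on \(\{T>n\}\) the indicator \(\1\{\sigma\leq n\}\) is constant, equal to \(\1\{n\geq m\}\) by monotonicity in \(n\) (if \(\{\sigma\le n\}\supseteq\{T>n\}\) then also \(\{\sigma\le n+1\}\supseteq\{T>n+1\}\), because \(\{T>n+1\}\subseteq\{T>n\}\) and \(\{\sigma\le n\}\subseteq\{\sigma\le n+1\}\)). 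So on the surviving set \(\{T>n\}\), \(\sigma\leq n\) iff \(n\geq m\).

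Finally verify \eqref{eq:minimal-horizon-form} pathwise. Fix \(\omega\) and compare \(\sigma\wedge T\) with \(T\wedge m\): for each \(n<T(\omega)\), \(\omega\in\{T>n\}\), so \(\sigma(\omega)\leq n\iff m\leq n\). Thus \(\sigma(\omega)\) and \(m\) either both are \(\geq T(\omega)\)-ish beyond all \(n<T(\omega)\), or their first value below \(T(\omega)\) coincides; concretely \(\min(\sigma(\omega),T(\omega)-1)\)-truncations agree, i.e. \(\{n<T(\omega):\sigma(\omega)\le n\}=\{n<T(\omega):m\le n\}\), which gives \(\sigma(\omega)\wedge T(\omega)=m\wedge T(\omega)\) (including \(T(\omega)=\infty\), where the sets range over all \(n\in\Nzero\)). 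The only delicate point, which I expect to be the main obstacle, is the bookkeeping with empty atoms (e.g. \(\{T>n\}=\varnothing\) for large \(n\)): then the dichotomy is vacuous, but such \(n\) never satisfy \(n<T(\omega)\), so the definition of \(m\) via nonempty intersection handles it consistently.
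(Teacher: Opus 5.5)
Your proof is correct and is essentially the paper's argument. Both rely on the surviving set being a single atom of the minimal alarm filtration, so the stopping decision is constant on it, and monotonicity in \(n\) then yields a deterministic switching index \(m\). The difference is only bookkeeping: you test \(\{\sigma\le n\}\) against the atom \(\{T>n\}\) of \(\cG_n^T\) and verify the identity pathwise, whereas the paper tests \(\{\sigma\wedge T\ge n\}\) against the atom \(\{T\ge n\}\) of \(\cG_{n-1}^T\) and matches the tail events of \(T\wedge m\) directly.
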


\begin{proof}
For every \(n\geq1\),
\[
\{\sigma\wedge T\geq n\}
=\{\sigma\geq n\}\cap\{T\geq n\}
\in\cG_{n-1}^T.
\]
Because \(\{T\geq n\}\) is an atom of \(\cG_{n-1}^T\), this event is either empty or all of \(\{T\geq n\}\). Define
\[
m:=\sup\left\{n\geq0:
\{\sigma\wedge T\geq j\}=\{T\geq j\}
\text{ for every }j\leq n\right\}.
\]
If the first failure occurs at \(m+1\), then \(\{\sigma\wedge T\geq m+1\}=\varnothing\) while \(\{T\geq m+1\}\neq\varnothing\); because the left-hand events decrease, they remain empty thereafter, even if \(\{T\geq n\}\) later also becomes empty. Hence
\[
\{\sigma\wedge T\geq n\}
=
\begin{cases}
\{T\geq n\},&n\leq m,\\
\varnothing,&n>m,
\end{cases}
\]
which is exactly the tail-event description of \(T\wedge m\).
\end{proof}

\begin{theorem}[Truncated-mean characterization of strong run-length laws]\label{thm:run-length-law}
Fix \(b>0\) and an alarm time \(T\).
\begin{enumerate}[label=\textup{(\alph*)},leftmargin=2.2em]
\item Strong representability at scale \(b\) implies, for every \(P\in\cP\) and every \(m\in\N\),
\begin{equation}\label{eq:truncated-mean-law}
bP(T\leq m)\leq \E_P[T\wedge m].
\end{equation}
\item If the working filtration is the minimal alarm filtration \((\cG_n^T)\), then \eqref{eq:truncated-mean-law} for every \(P\in\cP\) and \(m\in\N\) is equivalent to optional-horizon linear false-alarm control at scale \(b\), and hence to strong e-detector representability at level \(b\). The canonical representative is \(M_n=b\1\{T\leq n\}\).
\end{enumerate}
Consequently, a probability law on \(\N\cup\{\infty\}\) is the null run-length law of some strong e-detector at scale \(b\), on its own natural alarm filtration, if and only if it satisfies \eqref{eq:truncated-mean-law}.
\end{theorem}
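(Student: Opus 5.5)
Part (a) follows from the optional-horizon inequality applied to a self-censoring horizon. If \(T\) is strongly representable at scale \(b\), then by \Cref{thm:strong-universality} (or directly \Cref{prop:strong-optional}) it satisfies \eqref{eq:oh-def}. Fix \(P\) and \(m\in\N\) and take \(\sigma:=T\wedge m\). This is a bounded stopping time in any filtration for which \(T\) is a stopping time. Since \(T\ge1\), we have \(\{T\le T\wedge m\}=\{T\le m\}\) (with the convention \(\{T\le\sigma\}=\{T<\infty,T\le\sigma\}\)). Hence \(bP(T\le m)\le\E_P[T\wedge m]\), which is \eqref{eq:truncated-mean-law}.

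For part (b), the forward direction is part (a). The converse reduces to checking \eqref{eq:oh-def} for \((\cG^T_n)\)-stopping times.

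By \Cref{lem:localization}, with \(C_n=n\) applied to the canonical witness, it suffices to treat bounded \(\sigma\); equivalently we may work through \Cref{thm:strong-universality}, whose proof only needs \(bP(T\le\tau)\le\E_P[\tau]\) at bounded \(\tau\). Fix a bounded \((\cG^T_n)\)-stopping time \(\sigma\). Two observations then do the work:
\begin{itemize}[leftmargin=2em]
\item \(\{T\le\sigma\}=\{T\le\sigma\wedge T\}\), and \(\E_P[\sigma]\ge\E_P[\sigma\wedge T]\), so it is enough to bound \(bP(T\le\sigma\wedge T)\) by \(\E_P[\sigma\wedge T]\).
\item By \Cref{lem:minimal-horizons}, \(\sigma\wedge T=T\wedge m\) for a deterministic \(m\in\Nzero\cup\{\infty\}\), and \(m<\infty\) can be arranged since \(\sigma\) is bounded (any \(m\ge\sup\sigma\) gives the same \(T\wedge m\) on relevant events, or simply \(T\wedge m\le\sigma\) forces finiteness).
\end{itemize}
If \(m=0\), the left side is \(P(T\le0)=0\). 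Otherwise the hypothesis \eqref{eq:truncated-mean-law} gives \(bP(T\le T\wedge m)=bP(T\le m)\le\E_P[T\wedge m]\le\E_P[\sigma]\).

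Thus optional-horizon control holds at bounded horizons. It extends to all horizons: apply the bounded case to \(\sigma\wedge N\), note that \(P(T\le\sigma\wedge N)\uparrow P(T\le\sigma)\), and use monotone convergence on the right. Strong representability with witness \(b\1\{T\le n\}\) then follows from \Cref{thm:strong-universality}.

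For the final statement about laws, realize a given law \(\mu\) on \(\N\cup\{\infty\}\) by letting \(T\) have law \(\mu\) on some space with the filtration \(\cG^T\). Conditions (a) and (b) depend only on the law, so the equivalence transfers directly.

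The main obstacle is the reduction \(\sigma\wedge T=T\wedge m\) with a finite \(m\), and making sure no boundary case is mishandled: \(m=\infty\), \(T=\infty\) on some paths, and the extended-sense convention when \(\sigma\) is unbounded. I would handle these by proving the bounded case first and passing to the limit, as above. The rest is a direct chain of inequalities.
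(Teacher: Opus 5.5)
Your proposal is correct and follows the paper's proof: part (a) uses the self-censoring horizon \(\sigma=T\wedge m\), and part (b) replaces \(\sigma\) by \(\sigma\wedge T\) and invokes \Cref{lem:minimal-horizons}. The only difference is bookkeeping. You first localize to bounded \(\sigma\le N\), where finiteness of \(m\) follows from \(T\wedge m=\sigma\wedge T\le N\), hence \(T\wedge m=T\wedge(m\wedge N)\), and you then pass to general horizons through \(\sigma\wedge N\). The paper instead treats integrable \(\sigma\) directly and handles \(m=\infty\) by monotone convergence in the truncated-mean hypothesis.
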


\begin{proof}
For \textup{(a)}, apply optional-horizon control to the bounded stopping time \(\sigma=T\wedge m\). Since \(\{T\leq T\wedge m\}=\{T\leq m\}\), this gives \eqref{eq:truncated-mean-law}.

For \textup{(b)}, necessity follows from part \textup{(a)}. Conversely, let \(\sigma\) be any integrable \((\cG_n^T)\)-stopping time. Replacing \(\sigma\) by \(\sigma\wedge T\) preserves the event \(\{T\leq\sigma\}\) and can only reduce its expectation. By \Cref{lem:minimal-horizons}, \(\sigma\wedge T=T\wedge m\) for some deterministic \(m\in\Nzero\cup\{\infty\}\). If \(m=0\), then \(P(T\leq\sigma)=0\) because alarm times are positive. If \(1\leq m<\infty\), then
\[
bP(T\leq\sigma)
=bP(T\leq m)
\leq\E_P[T\wedge m]
=\E_P[\sigma\wedge T]
\leq\E_P[\sigma].
\]
If \(m=\infty\), then \(\sigma\wedge T=T\). Letting the finite truncation level tend to infinity in \eqref{eq:truncated-mean-law} gives
\[
bP(T<\infty)\leq\E_P[T]=\E_P[\sigma\wedge T]\leq\E_P[\sigma]
\]
by monotone convergence. Thus \(T\) has optional-horizon control, and \Cref{thm:strong-universality} supplies the canonical strong representative.
\end{proof}

\begin{corollary}[Strong scale in the minimal alarm filtration]\label{cor:minimal-strong-scale}
Under the minimal alarm filtration and a fixed null law \(P\),
\begin{equation}\label{eq:minimal-strong-scale}
b_P^*(T)
=
\inf_{\substack{m\in\N\cup\{\infty\}\\P(T\leq m)>0}}
\frac{\E_P[T\wedge m]}{P(T\leq m)}.
\end{equation}
Equivalently,
\begin{equation}\label{eq:minimal-value-function}
V_b^P(T)
=
\sup_{m\in\Nzero}
\left\{bP(T\leq m)-\E_P[T\wedge m]\right\}.
\end{equation}
\end{corollary}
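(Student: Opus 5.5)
The plan is to reduce the infimum in \eqref{eq:strong-scale} over all integrable \((\cG_n^T)\)-stopping times to the deterministic family \(\sigma=T\wedge m\), using \Cref{lem:minimal-horizons} exactly as in the proof of \Cref{thm:run-length-law}(b). Then I will read off the value-function identity from \Cref{prop:optimal-stopping-formulation} applied to the same reduction.

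\textbf{Step 1: lower bound for \(b_P^*(T)\).} Let \(\sigma\) be any stopping time with \(\E_P[\sigma]<\infty\) and \(P(T\leq\sigma)>0\). Replacing \(\sigma\) by \(\sigma\wedge T\) keeps the event \(\{T\leq\sigma\}\) (on \(\{T<\infty\}\)) and does not increase \(\E_P[\sigma]\). So the ratio only decreases, and we may assume \(\sigma=\sigma\wedge T\). By \Cref{lem:minimal-horizons}, \(\sigma=T\wedge m\) for a deterministic \(m\in\Nzero\cup\{\infty\}\). The case \(m=0\) is excluded because then \(P(T\leq\sigma)=0\). Hence each ratio is at least \(\E_P[T\wedge m]/P(T\leq m)\) for some admissible \(m\in\N\cup\{\infty\}\). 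This gives ``\(\geq\)'' in \eqref{eq:minimal-strong-scale}.

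\textbf{Step 2: upper bound for \(b_P^*(T)\).} Conversely, each \(T\wedge m\) with \(m<\infty\) is a bounded \((\cG_n^T)\)-stopping time with \(\{T\leq T\wedge m\}=\{T\leq m\}\), so it is admissible and realizes its ratio. For \(m=\infty\), the horizon \(T\) is admissible only when \(\E_P[T]<\infty\). If \(\E_P[T]=\infty\), that term equals \(\infty\) and does not affect the infimum. This gives ``\(\leq\)''.

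\textbf{Step 3: the value function.} By \Cref{lem:localization}, the supremum in \eqref{eq:optimal-stopping-value} is over bounded \(\sigma\). Fix such a \(\sigma\). Since \(\sigma\wedge T\) has the same event \(\{T\leq\cdot\}\) and a smaller mean, we get \(\E_P[b\1\{T\leq\sigma\}-\sigma]\leq bP(T\leq m)-\E_P[T\wedge m]\) with \(m\) supplied by \Cref{lem:minimal-horizons}. Here \(m\) is finite because \(\sigma\) is bounded: if \(\sigma\leq N\), we may take \(m\leq N\). The horizons \(T\wedge m\) are themselves bounded and attain these values, and \(m=0\) contributes \(0\). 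This proves \eqref{eq:minimal-value-function}.

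The main obstacle is bookkeeping rather than substance. One point is the convention \(\{T\leq\sigma\}=\{T<\infty,T\leq\sigma\}\) together with the \(m=\infty\) term. The other is confirming that bounded \(\sigma\) yields a finite \(m\), which follows because \(\{\sigma\wedge T\geq N+1\}=\varnothing\).
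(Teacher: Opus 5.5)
Your proposal is correct and follows essentially the same route as the paper: replace \(\sigma\) by \(\sigma\wedge T\), invoke \Cref{lem:minimal-horizons} to write it as \(T\wedge m\) for a deterministic \(m\), and substitute into both \eqref{eq:strong-scale} and \eqref{eq:optimal-stopping-value}. Your direct treatment of \(m=\infty\) (admitting \(\sigma=T\) when \(\E_P[T]<\infty\)) stands in for the paper's monotone-convergence remark. The appeal to \Cref{lem:localization} in Step~3 is harmless but unnecessary, since \(V_b^P(T)\) is already defined as a supremum over bounded horizons.
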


\begin{proof}
Replacing any horizon \(\sigma\) by \(\sigma\wedge T\) preserves \(\{T\leq\sigma\}\) and weakly decreases its expectation. By \Cref{lem:minimal-horizons}, the replacement equals \(T\wedge m\) for a deterministic \(m\). Substitution into \eqref{eq:strong-scale} and \eqref{eq:optimal-stopping-value} proves both formulas; the case \(m=\infty\) follows by monotone convergence.
\end{proof}

The condition has a useful budget interpretation. For one null law, write
\begin{equation}\label{eq:run-length-reserve}
D_m
:=\E[T\wedge m]-bP(T\leq m)
=\sum_{n=1}^m\bigl(P(T\geq n)-bP(T=n)\bigr).
\end{equation}
If \(h_n=P(T=n\mid T\geq n)\), then
\[
D_m=\sum_{n=1}^m P(T\geq n)(1-bh_n).
\]
Strong representability under the minimal filtration is exactly the requirement that the cumulative reserve \(D_m\) never become negative. A hazard may exceed \(1/b\) at some time, provided earlier low hazards have accumulated enough reserve. For \(b\geq1\), the geometric law with hazard \(1/b\) has \(D_m=0\) for every \(m\). Conversely, equality at every horizon forces, after differencing successive equalities,
\[
P(T=n)=\frac1b P(T\geq n),
\]
so the geometric law is unique. By contrast, for integer \(b\), the deterministic law \(T\equiv b\) first accumulates reserve and then spends it in one step. 


There is also a classical reliability-theoretic interpretation. Suppose the run length is exactly calibrated, with \(\mu:=\E[T]=b<\infty\), and define its discrete equilibrium lifetime \(T^{\mathrm{eq}}\) by
\[
P(T^{\mathrm{eq}}=n):=\frac{P(T\geq n)}{\mu},
\qquad n\geq1.
\]
Then
\[
P(T^{\mathrm{eq}}\leq m)
=\frac{\E[T\wedge m]}{\mu},
\]
so \eqref{eq:truncated-mean-law} is equivalent to \(T\geq_{\mathrm{st}}T^{\mathrm{eq}}\). This is precisely the discrete new-better-than-used-in-expectation (NBUE) condition; see \citet{marshallolkin2007,nairsankaranpreeth2012}. Hence, among null run lengths with exact mean \(b\), strong representability on the natural alarm filtration is equivalent to NBUE. The geometric law is the boundary case, while deterministic run lengths are strict NBUE examples.

The truncated inequalities immediately imply both familiar consequences:
\[
bP(T\leq m)\leq\E[T\wedge m]\leq m
\]
gives the linear deterministic tail bound, while letting \(m\to\infty\) gives the ARL bound. At \(m=1\), the inequality already rules out the early-alarm lottery in \Cref{ex:early-lottery}. More strongly, the family is strictly stronger than the deterministic tail and ARL requirements taken together: in \Cref{ex:det-not-optional}, \(b=10\), \(P(T\leq5)=1/2\), and \(\E[T\wedge5]=4\), so \eqref{eq:truncated-mean-law} fails even though every deterministic tail bound and the ARL constraint hold.

\begin{corollary}[Rigidity at a saturated deterministic horizon]\label{cor:tail-rigidity}
If \(T\) is strongly representable at scale \(b\) and
\[
P(T\leq m)=\frac{m}{b}
\]
for some \(m\in\N\), then \(T\geq m\) almost surely and
\[
P(T=m)=\frac{m}{b}.
\]
Thus the linear tail envelope can be saturated at a horizon only by a rule that never alarms before that horizon.
\end{corollary}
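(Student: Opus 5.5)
We use the truncated-mean inequality of \Cref{thm:run-length-law}(a) at the saturated horizon \(m\), together with the elementary pointwise bound \(T\wedge m\leq m\). Fix the null law \(P\) for which \(P(T\leq m)=m/b\). Strong representability gives
\[
m=bP(T\leq m)\leq\E_P[T\wedge m]\leq m,
\]
so \(\E_P[T\wedge m]=m\). Since \(m-(T\wedge m)\geq0\) pointwise and has mean zero, it vanishes almost surely, i.e.\ \(T\wedge m=m\) almost surely. Hence \(T\geq m\) almost surely.

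Consequently \(\{T\leq m\}=\{T=m\}\) up to a null set. Therefore \(P(T=m)=P(T\leq m)=m/b\), which is the second claim.

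The only step needing care is the interpretation of the statement for composite \(\cP\). The hypothesis \(P(T\leq m)=m/b\) refers to a particular null law, and the conclusion "almost surely" is under that same law. The argument is entirely per-law, so nothing beyond \Cref{thm:run-length-law}(a), applied with the horizon \(\sigma=T\wedge m\), is needed. No filtration assumption is used, because part (a) holds in any filtration. There is no real obstacle: the whole content is that the chain \(bP(T\leq m)\leq\E[T\wedge m]\leq m\) is squeezed to equality. One may remark that \(m\leq b\) is automatic, since \(m/b\) is a probability.
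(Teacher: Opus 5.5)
Your proposal is correct and matches the paper's proof: both squeeze the chain \(m=bP(T\leq m)\leq\E[T\wedge m]\leq m\), which comes from the truncated-mean inequality of \Cref{thm:run-length-law}(a). Equality then forces \(T\wedge m=m\) almost surely, so \(T\geq m\) and \(\{T\leq m\}=\{T=m\}\) up to a null set. Your per-law reading of the composite case is also how the paper intends it.
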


\begin{proof}
Strong representability gives
\[
m=bP(T\leq m)\leq\E[T\wedge m]\leq m.
\]
Equality throughout forces \(T\wedge m=m\) almost surely, hence \(T\geq m\). Therefore \(\{T\leq m\}=\{T=m\}\).
\end{proof}

\begin{example}[A deterministic strong run length]\label{ex:deterministic-strong}
Let \(b\in\N\), set \(T\equiv b\), and define \(M_n=b\1\{n\geq b\}\). For every integrable stopping time \(\tau\),
\[
\E[M_\tau]
=bP(\tau\geq b)
\leq\E[\tau],
\]
because \(\tau\geq b\) on \(\{\tau\geq b\}\). Thus \(M\) is strong and its level-\(b\) crossing time is the deterministic value \(b\). In particular, optional-horizon validity alone does not force a geometric or approximately exponential null run length.
\end{example}

The filtration qualification in \Cref{thm:run-length-law} is essential. Let \(b=10\) and let \(T\) equal \(5\) or \(15\), each with probability one half. Its law satisfies \eqref{eq:truncated-mean-law}: for \(5\leq m<15\), both sides are bounded by
\[
10P(T\leq m)=5
\leq\frac{5+m}{2}=\E[T\wedge m],
\]
and equality holds once \(m\geq15\). Hence the law is strongly representable under its minimal alarm filtration. If, however, the branch indicator is revealed at time one, the horizon \(\sigma=5\) on \(\{T=5\}\) and \(\sigma=1\) on \(\{T=15\}\) is a stopping time with
\[
P(T\leq\sigma)=\frac12
>\frac{3}{10}
=\frac{\E[\sigma]}{10}.
\]
The same marginal law therefore fails strong validity in the enriched filtration. Optional-horizon control is sensitive to anticipatory side information in exactly the way its definition suggests.

\subsection{Snell envelopes and martingale domination}

Strong validity has a checkable optimal-stopping characterization. The following clocked version is the e-detector analogue of martingale-domination results for anytime-valid inference, and its proof uses the Snell-envelope mechanism emphasized by \citet{ramdasruflarssonkoolen2022}.

\begin{theorem}[Snell and martingale domination]\label{thm:snell-strong}
Fix \(P\in\cP\). Let \(C\) be a clock with \(C_0=0\) and \(\E_P[C_n]<\infty\) for every finite \(n\), and let \(M\) be nonnegative and adapted with \(M_0=0\). The following are equivalent.
\begin{enumerate}[label=\textup{(\roman*)},leftmargin=2.4em]
\item \(M\) is a strong \(C\)-clocked \(P\)-e-process.
\item There exists an integrable \(P\)-supermartingale \(Y=(Y_n)_{n\geq0}\) such that
\begin{equation}\label{eq:snell-majorant}
Y_0=0,
\qquad
M_n-C_n\leq Y_n
\quad P\text{-almost surely for every }n.
\end{equation}
\item There exists an integrable \(P\)-martingale \(L=(L_n)_{n\geq0}\) such that
\begin{equation}\label{eq:martingale-majorant}
L_0=0,
\qquad
M_n\leq C_n+L_n
\quad P\text{-almost surely for every }n.
\end{equation}
\end{enumerate}
For the calendar clock \(C_n=n\), this characterizes strong e-detectors.
\end{theorem}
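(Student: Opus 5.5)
I would prove the cycle (iii)\(\Rightarrow\)(ii)\(\Rightarrow\)(i)\(\Rightarrow\)(iii), with the Snell envelope of \(M-C\) as the key construction.

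\emph{(iii)\(\Rightarrow\)(ii).} Take \(Y=L\); a martingale is in particular a supermartingale.

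\emph{(ii)\(\Rightarrow\)(i).} By \Cref{lem:localization} it suffices to check bounded \(\tau\le N\). The optional stopping theorem for the integrable supermartingale \(Y\) at a bounded time gives \(\E_P[Y_\tau]\le Y_0=0\). Hence
\[
\E_P[M_\tau]\le \E_P[C_\tau]+\E_P[Y_\tau]\le\E_P[C_\tau].
\]
Since \(0\le M_\tau\le C_\tau+Y_\tau\) and \(C_\tau\le C_N\) is integrable, all expectations here are well defined.

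\emph{(i)\(\Rightarrow\)(iii).} This is the substantive direction. Set \(Z_n:=M_n-C_n\). First, (i) at deterministic times gives \(\E_P[M_n]\le\E_P[C_n]<\infty\), so each \(Z_n\) is integrable.

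Next define the Snell envelope of \(Z\) over bounded stopping times,
\[
Y_n:=\esssup_{\tau\ge n,\ \tau\text{ bounded}}\E_P[Z_\tau\mid\cF_n].
\]
The standard properties I would establish are:
\begin{itemize}
\item \(Y_n\ge Z_n\), by choosing \(\tau=n\).
\item \(Y\) is a supermartingale, via the lattice (upward-directed) property of the family \(\{\E[Z_\tau\mid\cF_n]\}\) and monotone convergence of conditional expectations.
\item \(Y_0=\sup_\tau\E_P[Z_\tau]\le0\) by (i), using that \(\cF_0\) is trivial or else working conditionally. Also \(Y_0\ge Z_0=0\), so \(Y_0=0\).
\end{itemize}

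\emph{Main obstacle: integrability of \(Y_n\).} The lower bound \(Y_n\ge Z_n\) is integrable. For the upper bound, \(Y_n\le \esssup\E[M_\tau\mid\cF_n]-C_n\). I would use the directed property to get an increasing sequence \(\tau_k\) with \(\E[Z_{\tau_k}\mid\cF_n]\uparrow Y_n\). Then
\[
\E[Y_n]=\lim_k\E[Z_{\tau_k}]\le0
\]
by monotone convergence, applied after subtracting the integrable lower bound \(Z_n\). So \(Y_n\) is integrable, since \(\E[Y_n^+]<\infty\) follows from \(Y_n\ge Z_n\) together with \(\E[Y_n]\le 0\). To avoid conditional-sup subtleties I could instead work on a finite horizon \(N\), where the backward recursion \(Y^N_n=\max(Z_n,\E[Y^N_{n+1}\mid\cF_n])\) is clearly integrable, and then let \(N\uparrow\infty\) monotonically.

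\emph{Doob decomposition.} Write \(Y_n=L_n-A_n\) with \(L\) a martingale, \(L_0=Y_0=0\), and \(A\) predictable, nondecreasing, \(A_0=0\). Then
\[
M_n-C_n\le Y_n\le L_n,
\]
which is (iii). Integrability of \(L\) follows from that of \(Y\) and \(A_n=\sum_{j\le n}(Y_{j-1}-\E[Y_j\mid\cF_{j-1}])\). The calendar-clock statement is then the special case \(C_n=n\).
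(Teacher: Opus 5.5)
Your proof is correct and follows essentially the same route as the paper: the Snell envelope of \(M-C\) gives the supermartingale majorant, the Doob decomposition turns it into a martingale majorant, and optional sampling at bounded horizons plus \Cref{lem:localization} closes the cycle. The differences are only technical. You take the essential supremum over all bounded stopping times directly and get integrability from \(\E_P[Y_n]=\sup_{\tau}\E_P[Z_\tau]\leq 0\) together with \(Y_n\geq Z_n\), whereas the paper takes monotone limits of finite-horizon envelopes and bounds their positive parts by pasting. Note also that your mean bound at \(n=0\), combined with \(Y_0\geq Z_0=0\), already gives \(Y_0=0\) almost surely, so you need neither triviality of \(\cF_0\) nor a separate conditional argument.
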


For a composite null class, the theorem applies pointwise and yields a possibly different majorant \(Y^P\) or \(L^P\) for every \(P\in\cP\). It does not supply one process that is simultaneously a supermartingale or martingale under all null laws. Without additional structure, such a common majorant cannot generally be expected; this parallels the distinction between pointwise and common supermartingale domination for composite-null e-processes \citep{rufetal2023}.

\begin{proof}
Assume \textup{(i)} and write \(X_n=M_n-C_n\). Strong clocked validity at deterministic time \(j\) gives \(\E_P[M_j]\leq\E_P[C_j]<\infty\). Consequently, for every finite \(N\),
\[
\E_P\!\left[\max_{0\leq j\leq N}|X_j|\right]
\leq \sum_{j=0}^N\E_P[M_j+C_j]
<\infty.
\]
Thus the finite-horizon Snell envelopes below are well defined and admit optimal stopping times. For integers \(N\geq n\), let
\[
Y_n^{(N)}
:=\esssup_{\tau:\,n\leq\tau\leq N}
\E_P[X_\tau\mid\cF_n].
\]
This process dominates \(X_n\) and is a supermartingale over \(n\leq N\).

Let \(\tau_{n,N}^*\) be an optimal finite-horizon stopping time and put \(A=\{Y_n^{(N)}>0\}\in\cF_n\). Stop at \(\tau_{n,N}^*\) on \(A\) and at \(n\) on \(A^c\). Strong clocked validity gives
\[
0\geq
\E_P\bigl[\1_A Y_n^{(N)}+\1_{A^c}X_n\bigr],
\]
so
\[
\E_P[(Y_n^{(N)})^+]
\leq\E_P[X_n^-]
\leq\E_P[C_n].
\]
Also \(Y_n^{(N)}\geq X_n\geq-C_n\). Since \(Y_n^{(N)}\) is nondecreasing in \(N\), the limit
\[
Y_n:=\lim_{N\to\infty}Y_n^{(N)}
\]
is finite and integrable. Conditional Fatou's lemma, using the integrable lower bound \(-C_{n+1}\), shows that \(Y_n\geq\E_P[Y_{n+1}\mid\cF_n]\). Thus \(Y\) is a supermartingale and dominates \(X\). At time zero, \(X_0=0\). The same pasting argument with fallback time zero gives \(Y_0^{(N)}\leq0\), while immediate stopping gives \(Y_0^{(N)}\geq0\); hence \(Y_0=0\). This proves \textup{(ii)}.

By the discrete-time Doob decomposition, an integrable supermartingale with \(Y_0=0\) can be written as \(Y=L-A\), where \(L\) is an integrable martingale with \(L_0=0\) and \(A\) is predictable and nondecreasing with \(A_0=0\). Hence \(Y\leq L\), and \textup{(ii)} implies \textup{(iii)}.

Finally, suppose \textup{(iii)} holds. For every bounded stopping time \(\tau\), optional sampling gives
\[
\E_P[M_\tau]
\leq\E_P[C_\tau]+\E_P[L_\tau]
=\E_P[C_\tau].
\]
Clocked localization extends the inequality to arbitrary stopping times, proving \textup{(i)}.
\end{proof}

\begin{corollary}[General-filtration certificate for a run length]\label{cor:run-length-martingale-certificate}
An alarm time \(T\) is strongly representable at scale \(b\) under \(P\) if and only if there exists an integrable martingale \(L\), with \(L_0=0\), such that
\[
b\1\{T\leq n\}\leq n+L_n
\qquad P\text{-almost surely for every }n.
\]
Equivalently, the reward process \(b\1\{T\leq n\}-n\) has a supermartingale majorant starting from zero. This is the Snell certificate for the optimal-stopping formulation in \Cref{prop:optimal-stopping-formulation}.
\end{corollary}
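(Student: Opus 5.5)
This is a direct specialization of \Cref{thm:snell-strong} to the calendar clock \(C_n=n\), combined with \Cref{thm:strong-universality}. The calendar clock satisfies the hypotheses of \Cref{thm:snell-strong}: \(C_0=0\), it is nondecreasing, and \(\E_P[C_n]=n<\infty\).

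For the forward direction, assume \(T\) is strongly representable at scale \(b\) under \(P\). By \Cref{thm:strong-universality} applied with \(\cP=\{P\}\), the canonical process \(M_n=b\1\{T\leq n\}\) is then a strong \(P\)-e-detector. This holds because condition (i) of that theorem is intrinsic to \(T\), and representability by any strong \(M\) implies it via \Cref{prop:strong-optional}. Note that \(M\) is nonnegative, adapted (since \(T\) is a stopping time), and has \(M_0=0\) (since alarm times are positive). So \Cref{thm:snell-strong} applies to it. Its implication (i)\(\Rightarrow\)(iii) supplies an integrable martingale \(L\) with \(L_0=0\) and \(b\1\{T\leq n\}\leq n+L_n\). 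Its implication (i)\(\Rightarrow\)(ii) supplies the supermartingale majorant of \(b\1\{T\leq n\}-n\) with initial value zero.

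For the converse, suppose such an \(L\) exists, or alternatively a supermartingale majorant \(Y\) with \(Y_0=0\). In the latter case, the Doob decomposition step in the proof of \Cref{thm:snell-strong} produces a martingale majorant, so the two certificate formulations are equivalent. The implication (iii)\(\Rightarrow\)(i) of \Cref{thm:snell-strong} then shows that \(M_n=b\1\{T\leq n\}\) is a strong \(P\)-e-detector. Its level-\(b\) crossing time is exactly \(T\), because \(M\) is zero before \(T\) and equals \(b\) from \(T\) onward. Hence \(T\) is strongly representable.

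The final sentence of the statement follows from \Cref{prop:optimal-stopping-formulation}. Strong representability is equivalent to \(V_b^P(T)=0\), and the supermartingale \(Y\) constructed in the proof of \Cref{thm:snell-strong} is exactly the (limiting) Snell envelope of the reward \(b\1\{T\leq n\}-n\), with \(Y_0=0=V_b^P(T)\).

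No step here is a real obstacle. The only point needing care is the reduction from an arbitrary representing \(M\) to the canonical indicator process, and \Cref{thm:strong-universality} supplies this. Without it, one would only obtain a majorant for \(M\), not for \(b\1\{T\leq n\}\). That gap is harmless anyway, since \(b\1\{T\leq n\}\leq M_n\) pathwise whenever \(T=T_b(M)\), so a majorant for \(M\) is also a majorant for the indicator process.
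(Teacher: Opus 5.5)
Your proof is correct and follows the intended route. Strong representability is equivalent, via \Cref{prop:strong-optional} and \Cref{thm:strong-universality}, to strong validity of the canonical indicator $b\1\{T\leq n\}$, and \Cref{thm:snell-strong} with $C_n=n$ turns this into the martingale or supermartingale certificate; the paper gives no separate proof and relies on exactly this specialization.

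One correction to your closing aside. The pathwise inequality $b\1\{T\leq n\}\leq M_n$ is false in general, because a representing detector may fall back below $b$ after crossing. For example, the process in \Cref{rem:drift-certificate} with $b=2$ has $T=2$ but $M_3=0$. The aside's conclusion can be rescued by stopping at $T$:
\[
b\1\{T\leq n\}\leq M_{T\wedge n}\leq T\wedge n+L_{T\wedge n}\leq n+L_{T\wedge n},
\]
and the stopped process $L_{T\wedge n}$ is again an integrable martingale started at zero.
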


\begin{remark}[A sufficient drift condition, but not a characterization]\label{rem:drift-certificate}
If \(M_n-n\) is itself a supermartingale, then \Cref{thm:snell-strong} applies with \(Y_n=M_n-n\). For the canonical detector \(M_n=b\1\{T\leq n\}\), the one-step drift condition
\[
P(T=n\mid\cF_{n-1})\leq b^{-1}\1\{T\geq n\}
\]
implies exactly this supermartingale property. The converse fails, so the Snell envelope is doing genuine work. For example, let
\[
M_0=0,\qquad M_1=0,\qquad M_2=2,\qquad M_n=0\quad(n\geq3).
\]
Then \(M_\tau=2\1\{\tau=2\}\), so \(\E[M_\tau]\leq\E[\tau]\) for every stopping time and \(M\) is strong. Yet
\[
\E[M_2-2\mid\cF_1]=0>-1=M_1-1,
\]
so \(M_n-n\) is not a supermartingale.
\end{remark}

The domination theorem makes convexity transparent: convex combinations of martingale or supermartingale majorants remain valid majorants. No analogous all-horizon certificate exists for the weak class, consistent with \Cref{prop:convexity-separation,prop:running-maxima}.

\subsection{Deterministic-horizon bounds are not enough}

\begin{example}[A deterministic tail bound defeated by an adaptive horizon]\label{ex:det-not-optional}
Let \(\alpha=0.1\), so \(b=10\). Let \(V\sim\mathrm{Bernoulli}(1/2)\), let \(U\) be independent and uniform on \(\{1,2,3,4,5\}\), and reveal \((U,V)\) at time one. Define
\[
T:=
\begin{cases}
U,&V=1,\\
20,&V=0.
\end{cases}
\]
Then for \(1\leq t\leq5\),
\[
P(T\leq t)=\frac12\frac{t}{5}=0.1t,
\]
while for \(5<t<20\), \(P(T\leq t)=1/2\leq0.1t\), and for \(t\geq20\), \(P(T\leq t)=1\leq0.1t\). Thus the deterministic bound \(P(T\leq t)\leq t/10\) holds at every time. Also,
\[
\E[T]=\frac12\cdot3+\frac12\cdot20=11.5\geq10.
\]

Now choose the data-dependent stopping horizon
\[
\sigma:=
\begin{cases}
5,&V=1,\\
1,&V=0.
\end{cases}
\]
This is a stopping time because \(V\) is observed at time one. On \(\{V=1\}\), one always has \(T=U\leq5=\sigma\); on \(\{V=0\}\), one has \(T=20>1=\sigma\). Consequently,
\[
P(T\leq\sigma)=\frac12,
\qquad
\frac{\E[\sigma]}{10}
=\frac{\frac12\cdot5+\frac12\cdot1}{10}
=0.3.
\]
The optional-horizon inequality fails. By \Cref{thm:weak-universality}, \(T\) is representable by a weak e-detector; by \Cref{thm:strong-universality}, it is not representable by a strong one.
\end{example}

The adaptive horizon spends more observation time on the branch where an alarm is likely and stops quickly on the other branch. No deterministic horizon can express this selection effect. This is exactly the distinction between a collection of marginal tail inequalities and an optional-stopping guarantee.

\subsection{A sufficient conditional-hazard condition}

Optional-horizon control may initially look difficult to verify. A simple conditional hazard condition is sufficient and gives a familiar example.

\begin{proposition}[Bounded conditional false-alarm hazard]\label{prop:hazard}
Let \(b>0\), and suppose that for every \(P\in\cP\) and every \(n\geq1\),
\begin{equation}\label{eq:hazard}
P(T=n\mid\cF_{n-1})
\leq
\frac{1}{b}\1\{T\geq n\}
\qquad P\text{-almost surely}.
\end{equation}\footnote{For an upper bound, the at-risk indicator is algebraically redundant because the stopping-time property already forces \(P(T=n\mid\cF_{n-1})=0\) on \(\{T<n\}\). We retain it to display the hazard interpretation. In the lower bound of \eqref{eq:two-sided-hazard}, the indicator is essential.}
Then \(T\) has optional-horizon linear false-alarm control at scale \(b\).
\end{proposition}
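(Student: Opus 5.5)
The plan is to work with the canonical process \(M_n:=b\1\{T\leq n\}\) and check that \(M_n-n\) is a \(P\)-supermartingale for each \(P\in\cP\). After that, either \Cref{thm:snell-strong} (with \(Y_n=M_n-n\), \(C_n=n\)) or a direct optional-sampling argument shows that \(M\) is a strong e-detector. \Cref{prop:strong-optional} (equivalently, \Cref{thm:strong-universality}) then yields optional-horizon control for \(T=T_b(M)\). This is the sufficiency direction already noted in \Cref{rem:drift-certificate}, so the proof should be short.

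First, integrability. Since \(0\leq M_n\leq b\), we have \(|M_n-n|\leq b+n\), so each \(Y_n:=M_n-n\) is integrable and adapted, with \(Y_0=0\) because \(T\geq1\).

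Next, the one-step drift. The increment is \(Y_n-Y_{n-1}=b\1\{T=n\}-1\), since \(\1\{T\leq n\}-\1\{T\leq n-1\}=\1\{T=n\}\). Conditioning on \(\cF_{n-1}\) and applying \eqref{eq:hazard} gives
\[
\E_P[Y_n-Y_{n-1}\mid\cF_{n-1}]=bP(T=n\mid\cF_{n-1})-1\leq\1\{T\geq n\}-1\leq0.
\]
So \(Y\) is a supermartingale with \(Y_0=0\).

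Finally, optional sampling at a bounded stopping time \(\tau\) gives \(\E_P[M_\tau]-\E_P[\tau]=\E_P[Y_\tau]\leq Y_0=0\). \Cref{lem:localization} extends this to all stopping times, so \(M\) is a strong \(\cP\)-e-detector crossing \(b\) exactly at \(T\), and \Cref{prop:strong-optional} finishes the proof.

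As a backup, there is a direct argument that avoids supermartingales. For bounded \(\sigma\), write
\[
P(T\leq\sigma)=\sum_{n\geq1}\E_P\bigl[\1\{\sigma\geq n\}P(T=n\mid\cF_{n-1})\bigr]\leq\frac1b\sum_{n}P(\sigma\geq n)=\frac{\E_P[\sigma]}{b},
\]
using that \(\{\sigma\geq n\}\in\cF_{n-1}\). For general \(\sigma\), take monotone limits over \(\sigma\wedge N\). This mirrors the computation in \Cref{ex:fast-start-shewhart}.

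There is no real obstacle here. The only care needed is the extended-sense interpretation for infinite \(\sigma\), where \(\{T\leq\sigma\}=\{T<\infty,T\leq\sigma\}\) is the increasing limit of \(\{T\leq\sigma\wedge N\}\), and this is handled by monotone convergence.
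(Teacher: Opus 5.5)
Your proposal is correct. Your backup argument is essentially the paper's proof: write \(P(T\leq\sigma)=\sum_{n\geq1}\E_P[\1\{\sigma\geq n\}P(T=n\mid\cF_{n-1})]\) by Tonelli, bound the hazard by \(b^{-1}\1\{T\geq n\}\), and sum \(P(\sigma\geq n)\). Because every term is nonnegative, the paper applies this directly to arbitrary \(\sigma\), so your truncation to \(\sigma\wedge N\) is harmless but unnecessary.

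Your main route repackages the same one-step computation as a supermartingale statement. You show that \(b\1\{T\leq n\}-n\) has nonpositive drift, then finish with optional sampling, \Cref{lem:localization}, and \Cref{prop:strong-optional}. This is the drift certificate of \Cref{rem:drift-certificate}, a special case of \Cref{thm:snell-strong}. None of those results depend on the present proposition, so there is no circularity.

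What your route buys is an explicit strong e-detector certificate for the canonical indicator, so the conclusion of \Cref{thm:strong-universality} appears without invoking it. What the paper's direct computation buys is self-containment and a visibly sharper intermediate bound, \(P(T\leq\sigma)\leq b^{-1}\sum_n P(\sigma\geq n,\,T\geq n)=\E_P[\sigma\wedge T]/b\). Your route recovers that bound too, through \(\rho=T\wedge\sigma\) in \Cref{prop:strong-optional}.

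At bottom the two arguments coincide. Optional sampling for \(Y\) at \(\sigma\wedge N\) is exactly the sum of the increments \(b\1\{T=n\}-1\) weighted by the predictable indicators \(\1\{\sigma\geq n\}\).
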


\begin{proof}
Fix \(P\) and an integrable stopping time \(\sigma\). The event \(\{\sigma\geq n\}\) belongs to \(\cF_{n-1}\). By Tonelli's theorem and \eqref{eq:hazard},
\begin{align*}
P(T\leq\sigma)
&=\sum_{n=1}^{\infty}\E_P\bigl[\1\{\sigma\geq n\}\1\{T=n\}\bigr]\\
&=\sum_{n=1}^{\infty}\E_P\!\left[\1\{\sigma\geq n\}P(T=n\mid\cF_{n-1})\right]\\
&\leq \frac1b\sum_{n=1}^{\infty}P(\sigma\geq n,\,T\geq n)\\
&\leq \frac1b\sum_{n=1}^{\infty}P(\sigma\geq n)
=\frac{\E_P[\sigma]}{b}.
\end{align*}
\end{proof}

\begin{example}[Geometric false alarms]\label{ex:geometric}
Let \(b\geq1\). Suppose that under every null law the procedure alarms at each time with conditional probability \(1/b\), given survival to that time. Then \(T\) is geometric with mean \(b\), \eqref{eq:hazard} holds with equality, and the canonical indicator process \(b\1\{T\leq n\}\) is a strong e-detector. Notice that \(P(T<\infty)=1\): strong e-detector validity is compatible with eventual false alarm almost surely, unlike e-process/PFA validity. What it controls is the rate at which false-alarm probability can be accumulated, including after optional selection of the horizon.
\end{example}

\begin{proposition}[Nonasymptotic geometric sandwich]\label{prop:geometric-sandwich}
Suppose constants \(0\leq\underline q\leq\overline q\leq1\) satisfy, for every \(n\geq1\),
\begin{equation}\label{eq:two-sided-hazard}
\underline q\,\1\{T\geq n\}
\leq P(T=n\mid\cF_{n-1})
\leq\overline q\,\1\{T\geq n\}
\quad P\text{-almost surely}.
\end{equation}
Then
\begin{equation}\label{eq:geometric-sandwich}
(1-\overline q)^n
\leq P(T>n)
\leq(1-\underline q)^n,
\qquad n\geq0.
\end{equation}
In particular, an upper hazard bound \(1/b\), with \(b\geq1\), implies that \(T\) stochastically dominates a geometric random variable of mean \(b\). If additionally \(\E[T]=b\), then \(P(T>n)=(1-1/b)^n\) for every \(n\), and the null run length is exactly geometric.
\end{proposition}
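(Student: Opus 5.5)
The plan is to express the survival probability as a product of conditional survival probabilities and bound each factor by the hazard bounds. The filtration is general, so the exact identity \(P(T>n)=\prod_{j\le n}(1-h_j)\) is not available. Instead I will proceed by induction on \(n\), working with the conditional hazard directly. The case \(n=0\) is trivial, since alarm times are positive and so \(P(T>0)=1\).

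For the inductive step, note that \(\{T>n\}=\{T\geq n\}\setminus\{T=n\}\) and that \(\{T\geq n\}=\{T>n-1\}\in\cF_{n-1}\). Conditioning on \(\cF_{n-1}\) gives
\[
P(T>n)=\E_P\bigl[\1\{T\geq n\}\bigl(1-P(T=n\mid\cF_{n-1})\bigr)\bigr].
\]
I then apply \eqref{eq:two-sided-hazard} on the event \(\{T\geq n\}\), which is the only place it matters. This sandwiches the integrand between \((1-\overline q)\1\{T\geq n\}\) and \((1-\underline q)\1\{T\geq n\}\). Consequently
\[
(1-\overline q)P(T>n-1)\leq P(T>n)\leq(1-\underline q)P(T>n-1),
\]
and iterating this from the base case yields \eqref{eq:geometric-sandwich}. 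No step here is delicate.

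For the stochastic-dominance claim, take \(\overline q=1/b\) and \(\underline q=0\); the lower bound in \eqref{eq:two-sided-hazard} is then trivial. This gives \(P(T>n)\geq(1-1/b)^n\), which is exactly the survival function of a geometric variable on \(\N\) with mean \(b\).

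For the exactness claim, I sum the survival functions: \(\E[T]=\sum_{n\geq0}P(T>n)\) and \(\sum_{n\ge0}(1-1/b)^n=b\). If \(\E[T]=b\), the termwise inequality \(P(T>n)\ge(1-1/b)^n\) between two series with equal finite sums forces equality at every \(n\). The case \(b=1\) is consistent with this: it gives \(T\equiv1\).

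The only point needing care is the conditioning step, where \(\1\{T\ge n\}\) must be \(\cF_{n-1}\)-measurable so that it can be pulled inside the conditional expectation. This holds because \(T\) is a stopping time.
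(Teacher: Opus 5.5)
Your proof is correct and is essentially the paper's argument. Both use \(\{T\geq n\}\in\cF_{n-1}\) to sandwich \(\E_P[\1\{T>n\}\mid\cF_{n-1}]\) between \((1-\overline q)\1\{T\geq n\}\) and \((1-\underline q)\1\{T\geq n\}\), take expectations and iterate, and then get exactness by comparing \(\sum_{n\geq0}P(T>n)\) termwise with \(\sum_{n\geq0}(1-1/b)^n=b\); your explicit base case \(P(T>0)=1\) and the check of \(b=1\) are details the paper leaves implicit.
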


\begin{proof}
Since \(\{T\geq n\}\in\cF_{n-1}\), subtracting the conditional probability of \(\{T=n\}\) from \(\1\{T\geq n\}\) gives
\[
(1-\overline q)\1\{T\geq n\}
\leq
\E[\1\{T>n\}\mid\cF_{n-1}]
\leq
(1-\underline q)\1\{T\geq n\}.
\]
Take expectations and iterate. For the last claim, the one-sided bound gives \(P(T>n)\geq(1-1/b)^n\). Summing over \(n\geq0\) gives \(\E[T]\geq b\). Equality of the sums forces equality term by term.
\end{proof}

The geometric sandwich is stronger than the linear deterministic bound \(P(T\leq t)\leq t/b\). It gives a finite-sample route to the approximately exponential run-length behavior studied asymptotically by \citet{pollaktartakovsky2009exit}. The optional-horizon inequality alone does not imply such exponentiality: \Cref{ex:deterministic-strong} is a strong e-detector whose run length is a point mass.

\section{Discussion}\label{sec:discussion}

\subsection{On weak versus strong e-detectors}

The clock theorems give two exact correspondences. Expected-clock lower bounds are represented by weak own-crossing objects; optional-clock false-alarm bounds are represented by strong all-stopping-times objects. For calendar time, these become ARL control and optional-horizon linear false-alarm control, respectively.

The canonical weak detector \(b\1\{T\leq n\}\) proves formal completeness, but its validity condition is close to the ARL statement it certifies. It is zero before the alarm, supplies no graded evidence, and does not simplify verification of a pre-existing rule. More importantly, \Cref{prop:convexity-separation} shows that weak e-detectors are not stable under averaging. Weak detectors can always be put in nondecreasing form by taking running maxima, but \Cref{prop:running-maxima} shows that this operation need not preserve strong validity. These asymmetric closure properties reinforce the same conclusion: the weak class is a representation language, not a robust algebra for constructing procedures.

Strong e-detectors are different. Their all-horizon inequality is convex, survives independent enlargement by \Cref{lem:independent-enlargement}, is preserved by the aggregation operations used in \citet{shin2024}, and has the optimal-stopping and martingale certificates in \Cref{prop:optimal-stopping-formulation} and \Cref{thm:snell-strong}. The canonical strong indicator remains degenerate, but the class contains nondegenerate evidence-accumulating procedures with favorable detection delay. 

\subsection{The operational content of strong validity}

Every strong e-detector threshold obeys
\(
P(T_b\leq t)\leq t/b
\)
at fixed horizons and the  optional-horizon inequality at data-dependent horizons. Hence any e-detector-based CUSUM, Shiryaev--Roberts, or mixture procedure automatically avoids the extreme front-loading permitted by an ARL constraint. This matters in deployments with a finite mission, batch, or monitoring window, and it addresses the concern that a large ARL can conceal high early false-alarm risk \citep{mei2008}.

The maximal strong scale makes the protection quantitative. The largest scale at which a given rule can be advertised as strongly valid is
\[
b_P^*(T)=\inf_\sigma \frac{\E_P[\sigma]}{P(T\leq\sigma)},
\]
whereas the largest weak scale is its ARL. Thus \(\E_P[T]/b_P^*(T)\) measures how much of the nominal ARL is created by histories that an adaptive horizon can avoid. For the fast-start chart, \(b_P^*(T)=1/p\): at \(b=100\) and \(p=0.1\), an ARL of \(100\) supports only strong calibration at scale \(10\).

The self-censoring horizons \(T_b\wedge m\) already give the sharper diagnostic
\[
bP(T_b\leq m)\leq\E[T_b\wedge m].
\]
Under the minimal alarm filtration, \Cref{thm:run-length-law} shows that these inequalities are not merely consequences but a complete characterization of the attainable run-length laws. The reserve process in \eqref{eq:run-length-reserve} distinguishes a geometric rule, which spends its budget locally at every step, from a deterministic rule, which can accumulate budget and spend it later. \Cref{cor:tail-rigidity} adds a sharp no-front-loading statement: if the linear tail envelope is saturated at one horizon, then the procedure cannot alarm before that horizon.

Optional-horizon control may be read as an ARL analogue of anytime validity. An observer may allocate more monitoring time to sample paths that appear likely to alarm and less time elsewhere, as in \Cref{ex:det-not-optional}. A strong e-detector remains valid under every such allocation. The clock view makes the distinction concise: an e-process has a fixed unit budget; an e-detector receives one unit of budget per expected unit of monitored time.

The exponential-clock construction in \Cref{prop:exp-clock-pinning} supplies a complementary practical point. Convexity and stability under independent enlargement permit a nondegenerate data-driven strong detector to be mixed with an external random clock, producing an actual ARL in \([\gamma,(1+\eta)\gamma+1]\) rather than only a lower bound. 
The price is explicit: the auxiliary component forces an eventual null crossing and places a Poisson-tail upper bound on the run length, so the mixture should be understood as calibrated regularization rather than free additional evidence.

\subsection{Does the weak--strong gap buy detection power?}

\Cref{ex:fast-start-shewhart} gives an exact rather than benchmark-dependent separation. At the same ARL, its first-observation power exceeds the maximum possible for every strongly representable rule in the Gaussian shift experiment. Indeed, as \(p\uparrow1\), this power approaches one while the strong-class optimum remains fixed at \(\beta_\mu(1/b)\). Thus the weak rule can approach the maximal possible power one while the strong-class optimum stays fixed strictly below one. Its price is equally stark: the continuation hazard tends to zero and the Lorden delay diverges. Thus the weak freedom is provably active for an immediate change and provably harmful for this particular minimax comparison.

Existing strong e-detector constructions can attain first-order asymptotically optimal delay in broad nonparametric and composite settings \citep{shin2024,ramramdas2026}, so strong validity need not impose a first-order asymptotic cost in those models.


The weak representation theorem is scale-specific. For each \(b\), it produces a process representing one ARL-valid \(T\) at level \(b\). A whole family \(\{T_b\}_{b>0}\) need not be realizable as all thresholds of one weak detector, even when \(\E[T_b]\geq b\) for every \(b\). Appendix~\ref{app:simultaneous} gives an exact characterization: pathwise threshold coherence is necessary, and the canonical running level must satisfy an additional overshoot inequality. This simultaneous-family problem is a more stringent and genuinely nontrivial notion of universality.

The appendices also show that conjunctions of validity criteria need not require a new canonical statistic. ARL and a deterministic tail envelope are represented by the same indicator process, provided it satisfies both an own-crossing budget and a fixed-time running-maximum budget; see Appendix~\ref{app:conjunction}. Restart-indexed criteria reveal another useful boundary: weak residual ARL has its own field representation, whereas requiring strong validity after every observed history is exactly equivalent to conditional hazard control; see Appendix~\ref{app:residual-arl}.

\section{Conclusion}\label{sec:conclusion}

The appropriate universal object for change detection depends on the false-alarm criterion. A general budget clock makes this dependence explicit. Optional-clock false-alarm control is equivalent to thresholding a strong clocked e-process, while an expected-clock lower bound is equivalent to thresholding a weak own-crossing object.

For the calendar-time clock, the original e-detectors of \citet{shin2024} exactly represent the alarm times satisfying optional-horizon linear false-alarm control. Their thresholds therefore obey nontrivial lower-tail guarantees at every deterministic and data-dependent horizon; they cannot hide aggressive startup false alarms behind a long right tail. The maximal strong scale \(b^*(T)\) quantifies the largest defensible strong calibration of any rule. Under the minimal filtration generated by the alarm, the sharp truncated-mean inequalities \(bP(T\leq m)\leq\E[T\wedge m]\) completely characterize the possible null run-length laws, equivalently through an NBUE condition at exact calibration. The geometric law balances this inequality at every horizon, but deterministic and other nongeometric laws are also possible.

Weak e-detectors exactly represent the larger class of ARL-controlled alarm times. This extra freedom is statistically real: the fast-start Gaussian chart can drive immediate-change power arbitrarily close to one while retaining a fixed ARL, beyond the sharp strong-class cap, but its maximal strong scale collapses and its Lorden delay diverges. Thus ARL alone permits an explicit startup-versus-minimax tradeoff that optional-horizon control rules out.

The notions also differ structurally. Strong e-detectors are convex, stable under independent randomization, and admit Snell-envelope, supermartingale, and martingale-domination certificates. Their convexity supports nondegenerate exponential-clock calibration with explicit two-sided ARL bounds. Weak e-detectors are not convex, although they admit monotone normalization; conversely, running maxima can destroy strong validity. Weak universality should therefore be understood as formal completeness rather than as a general construction method. Bare ARL is sufficiently weak to admit a universal certificate but too weak to force a useful algebra of certificates. The stronger optional-horizon criterion is what supports operational protection, interpretable run-length restrictions, and constructive e-detector methodology.

\subsection*{Acknowledgments} 
The author thanks Ashwin Ram for useful technical conversations and feedback on an early preprint. AI tools were used for developing some of the extensions in the Appendix, some exposition, and creating some illuminating examples, but the author takes responsibility for their correctness.

\clearpage
\appendix

\section*{Guide to the appendices}

The main text develops the common clock abstraction and then focuses on its two calendar-time specializations: weak e-detectors for ARL and strong e-detectors for optional-horizon linear false-alarm control. The appendices serve a different purpose. They test how far the same indicator-certificate method extends to other natural type-I error metrics, and they separate exact representation statements from practically checkable construction principles. None of the main weak--strong results depends on these extensions.

Appendix~\ref{app:weighted-loss} shows that weighted false-alarm losses, including discounted and Bayesian criteria, require no new process class: ordinary e-processes with curved boundaries are universal. Appendix~\ref{app:tail-envelope} treats deterministic run-length tail envelopes using marginal e-values and exhibits their failure under optional stopping. Appendix~\ref{app:e-hazard} develops conditional one-step hazard certificates and connects them back to strong clocked validity. Appendix~\ref{app:local-window} considers rolling-window error control; its survival-conditioned representation is exact but self-referential, so it should be read as a completeness theorem rather than a row-by-row construction recipe. Appendix~\ref{app:residual-arl} treats both weak residual-ARL fields and their strong, historywise counterpart; the latter turns out to be equivalent to a conditional hazard bound. Appendix~\ref{app:counting} handles repeated alarms.

Two new appendices address questions raised by scale-specific universality. Appendix~\ref{app:simultaneous} characterizes when an entire family \(\{T_b\}\) can be represented by one weak detector and shows that scale-wise ARL bounds are insufficient. Appendix~\ref{app:conjunction} represents simultaneous ARL and deterministic-tail requirements. Finally, Appendix~\ref{app:synthesis} summarizes the resulting taxonomy. Terms such as \emph{tail e-envelope}, \emph{e-hazard sequence}, and \emph{rolling e-field} are proposed bookkeeping terminology, not claims of established usage.

\section{Weighted false-alarm losses and curved e-boundaries}\label{app:weighted-loss}

The main text contrasts two familiar criteria: PFA, which bounds the probability of ever alarming, and ARL, which lower-bounds the mean alarm time. A broad intermediate family assigns a deterministic loss to the time at which a false alarm occurs. These criteria require no new process class: ordinary e-processes remain universal, but the appropriate crossing boundary is generally time-dependent.

Let
\[
w:\N\cup\{\infty\}\longrightarrow[0,\infty)
\]
be deterministic, with \(0<w(n)<\infty\) for every finite \(n\), and set \(w(\infty)=0\). The quantity \(w(T)\) may be interpreted as the loss incurred by a false alarm at time \(T\). No monotonicity assumption is needed for the representation theorem, although in applications \(w\) will often be nonincreasing so that early alarms are penalized more heavily.

\begin{theorem}[Universality for weighted stopping losses]\label{thm:weighted-universality}
Fix \(\alpha>0\) and a stopping time \(T\). The following are equivalent.
\begin{enumerate}[label=\textup{(\roman*)},leftmargin=2.4em]
\item
\begin{equation}\label{eq:weighted-loss-control}
\sup_{P\in\cP}\E_P[w(T)]\leq\alpha.
\end{equation}
\item There exists a \(\cP\)-e-process \(E=(E_n)_{n\geq0}\) such that
\begin{equation}\label{eq:weighted-boundary-crossing}
T=\inf\left\{n\geq1:E_n\geq\frac{w(n)}{\alpha}\right\}.
\end{equation}
\end{enumerate}
When \textup{(i)} holds, one may take
\begin{equation}\label{eq:canonical-weighted-eprocess}
E_n^{T,w}:=\frac{w(T)}{\alpha}\1\{T\leq n\},\qquad n\geq0.
\end{equation}
\end{theorem}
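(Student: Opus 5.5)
The proof follows the two-direction template of \Cref{thm:clocked-strong-universality}, with a time-dependent boundary \(w(n)/\alpha\) in place of a constant threshold.

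For \textup{(ii)}\(\Rightarrow\)\textup{(i)}, fix \(P\in\cP\) and evaluate the e-process property of \(E\) at the stopping time \(T\) itself, using the infinite-horizon convention \(E_T=\liminf_N E_{T\wedge N}\) and nonnegativity. On \(\{T<\infty\}\), the crossing definition \eqref{eq:weighted-boundary-crossing} gives \(E_T\geq w(T)/\alpha\). On \(\{T=\infty\}\), we have \(w(T)=0\leq E_T\). Hence \(w(T)/\alpha\leq E_T\) pathwise, and
\[
\E_P[w(T)]\leq\alpha\,\E_P[E_T]\leq\alpha .
\]
Taking the supremum over \(P\) yields \eqref{eq:weighted-loss-control}. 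No finiteness of \(T\) is needed, because \Cref{lem:localization} with \(C\equiv1\) covers arbitrary stopping times.

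For \textup{(i)}\(\Rightarrow\)\textup{(ii)}, take the canonical process \eqref{eq:canonical-weighted-eprocess}.

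\emph{Adaptedness.} We have \(E_n^{T,w}=\sum_{k\leq n}\frac{w(k)}{\alpha}\1\{T=k\}\), and each indicator \(\1\{T=k\}\) with \(k\leq n\) is \(\cF_n\)-measurable.

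\emph{Crossing time.} For \(n<T\), \(E_n=0<w(n)/\alpha\), since \(w(n)>0\) for finite \(n\). At \(n=T<\infty\), \(E_T=w(T)/\alpha\), which meets the boundary exactly. So the first crossing is \(T\), and if \(T=\infty\) there is never a crossing. This is the only place where positivity of \(w\) on finite times is used, and it is the one point that needs attention: it rules out a spurious early crossing when the process is zero and the boundary is also zero.

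\emph{E-process property.} For any stopping time \(\tau\) (bounded suffices by \Cref{lem:localization}),
\[
E^{T,w}_\tau=\frac{w(T)}{\alpha}\1\{T\leq\tau\}\leq\frac{w(T)}{\alpha},
\]
so \(\E_P[E^{T,w}_\tau]\leq\E_P[w(T)]/\alpha\leq1\) by \textup{(i)}.

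Unlike the optional-horizon case, no constraint on \(\tau\) is required here. The weight is absorbed into the process, so the e-process inequality reduces directly to \textup{(i)}.
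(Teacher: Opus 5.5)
Your proof is correct and takes the paper's approach: the same canonical process \(E^{T,w}\) for (i)\(\Rightarrow\)(ii), where the e-process inequality reduces to \(\E_P[w(T)\1\{T\leq\tau\}]\leq\E_P[w(T)]\leq\alpha\), and the domination \(w(T)/\alpha\leq E_T\) at the crossing for (ii)\(\Rightarrow\)(i). The only cosmetic difference is that you apply the e-process inequality directly at the possibly infinite time \(T\) via the \(\liminf\) convention, whereas the paper evaluates at \(T\wedge N\) and then uses monotone convergence with \(w(\infty)=0\); both are valid under the paper's definition of an e-process.
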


\begin{proof}
Assume \textup{(ii)} and let \(T_N:=T\wedge N\). On \(\{T\leq N\}\), the process has crossed its boundary at time \(T\), so
\[
E_{T_N}=E_T\geq \frac{w(T)}{\alpha}.
\]
Since \(E\) is an e-process,
\[
\E_P\bigl[w(T)\1\{T\leq N\}\bigr]
\leq \alpha\E_P[E_{T_N}]
\leq\alpha
\]
for every \(P\in\cP\). Monotone convergence and \(w(\infty)=0\) give \eqref{eq:weighted-loss-control}.

Conversely, assume \textup{(i)} and define \(E^{T,w}\) by \eqref{eq:canonical-weighted-eprocess}. For every stopping time \(\tau\),
\[
\E_P[E_\tau^{T,w}]
=\frac1\alpha\E_P\bigl[w(T)\1\{T\leq\tau\}\bigr]
\leq\frac1\alpha\E_P[w(T)]
\leq1.
\]
Thus \(E^{T,w}\) is an e-process. Before \(T\) it is zero, while at \(T\) it equals \(w(T)/\alpha\), so its first crossing of the curved boundary in \eqref{eq:weighted-boundary-crossing} is exactly \(T\).
\end{proof}

\begin{remark}[PFA is the constant-loss case]\label{rem:pfa-weighted}
Taking \(w(n)\equiv1\) at finite times recovers
\[
\sup_{P\in\cP}P(T<\infty)\leq\alpha
\]
and the usual constant e-process boundary \(1/\alpha\). Thus the familiar PFA universality theorem is the constant-loss member of Theorem~\ref{thm:weighted-universality}.
\end{remark}

\begin{example}[Bayesian false-alarm probability]\label{ex:bayes-weighted}
Suppose a change time \(\nu\) is independent of the pre-change observations and has survival function
\[
\overline\Pi(n):=P(\nu>n),
\]
assume \(P(\nu<\infty)=1\), and assume \(\overline\Pi(n)>0\) at every finite \(n\). We set \(\overline\Pi(\infty)=0\), as required by Theorem~\ref{thm:weighted-universality}. A finite-support prior can instead be handled by the censoring argument in \Cref{rem:finite-horizon-pfa}.
Under the usual coupling in which observations before \(\nu\) have the no-change law,
\begin{equation}\label{eq:bayes-pfa-weight}
P(T<\nu)=\E_\infty[\overline\Pi(T)].
\end{equation}
Consequently, Bayesian false-alarm control is represented by an ordinary e-process crossing the boundary \(\overline\Pi(n)/\alpha\). For a geometric prior with hazard \(\rho\), this boundary is \((1-\rho)^n/\alpha\). It decreases with time because a late alarm is less likely to precede the random change. Weighted false-alarm probabilities of this form are classical in Bayesian quickest detection; see, for example, \citet{shiryaev1963,tartakovsky2014}.
\end{example}

\begin{remark}[Finite-horizon PFA]\label{rem:finite-horizon-pfa}
The loss \(w(n)=\1\{n\leq H\}\) vanishes at finite times after \(H\), so it falls outside the strictly positive convention used in Theorem~\ref{thm:weighted-universality}. The criterion is nevertheless an immediate PFA problem after censoring. Define
\[
T^{[H]}:=
\begin{cases}
T,&T\leq H,\\
\infty,&T>H.
\end{cases}
\]
Then \(P(T\leq H)=P(T^{[H]}<\infty)\), and the canonical e-process is
\[
E_n=\frac1\alpha\1\{T\leq n\wedge H\}.
\]
Its crossing time is \(T\) on \(\{T\leq H\}\) and infinity otherwise.
\end{remark}

\section{Deterministic tail envelopes and marginal e-certificates}\label{app:tail-envelope}

A deterministic family of false-alarm bounds has the form
\begin{equation}\label{eq:general-tail-envelope}
P(T\leq n)\leq g(n),\qquad n\geq1,
\end{equation}
where \(g:\N\to(0,1]\) is nondecreasing. This criterion is stronger than no restriction at all on early false alarms but weaker than optional-horizon validity: the horizon in \eqref{eq:general-tail-envelope} is fixed in advance.

\begin{definition}[Tail e-envelope]\label{def:tail-e-envelope}
A \emph{\(g\)-tail e-envelope} is a nonnegative adapted sequence of e-values \(E=(E_n)_{n\geq1}\), meaning that
\begin{equation}\label{eq:marginal-evalues}
\sup_{P\in\cP}\E_P[E_n]\leq1
\qquad\text{for every deterministic }n,
\end{equation}
such that the calibrated certificate
\begin{equation}\label{eq:absorbing-certificate}
A_n:=g(n)E_n
\end{equation}
 is pathwise nondecreasing. Its alarm time is
\begin{equation}\label{eq:tail-envelope-alarm}
T_g(E):=\inf\{n\geq1:A_n\geq1\}.
\end{equation}
\end{definition}

Thus each \(E_n\) is an e-value at the fixed time \(n\), but the sequence need not be an e-process. Monotonicity is imposed on the calibrated certificate \(A_n\), not on \(E_n\) itself.

\begin{theorem}[Universality for deterministic tail envelopes]\label{thm:tail-envelope-universality}
Fix a nondecreasing \(g:\N\to(0,1]\) and a stopping time \(T\). The following are equivalent.
\begin{enumerate}[label=\textup{(\roman*)},leftmargin=2.4em]
\item \(P(T\leq n)\leq g(n)\) for every \(P\in\cP\) and every \(n\geq1\).
\item There exists a \(g\)-tail e-envelope \(E\) such that \(T=T_g(E)\).
\end{enumerate}
Under \textup{(i)}, the canonical envelope is
\begin{equation}\label{eq:canonical-tail-envelope}
E_n^{T,g}:=\frac{\1\{T\leq n\}}{g(n)}.
\end{equation}
\end{theorem}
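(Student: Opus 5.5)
The proof follows the same indicator-certificate template as \Cref{thm:weak-universality} and \Cref{thm:weighted-universality}. We prove the two implications separately, and in each the key observation is that the alarm is read off the calibrated certificate \(A_n=g(n)E_n\), which is nondecreasing.

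For \textup{(ii)}\(\Rightarrow\)\textup{(i)}, let \(E\) be a \(g\)-tail e-envelope with \(T=T_g(E)\), and fix \(P\in\cP\) and \(n\geq1\). Because \(A\) is pathwise nondecreasing, the event \(\{T\leq n\}\) coincides with \(\{A_n\geq1\}\), that is, with \(\{E_n\geq 1/g(n)\}\). Markov's inequality then uses only the marginal e-value property \eqref{eq:marginal-evalues} at the deterministic time \(n\):
\[
P(T\leq n)=P\bigl(E_n\geq 1/g(n)\bigr)\leq g(n)\E_P[E_n]\leq g(n).
\]
This is the step where monotonicity of \(A\) does the real work. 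Without it, a crossing before \(n\) followed by a dip would not be visible at time \(n\), and we would need a maximal inequality that marginal e-values do not supply.

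For \textup{(i)}\(\Rightarrow\)\textup{(ii)}, take the canonical envelope \eqref{eq:canonical-tail-envelope}. We check four properties in turn.
\begin{enumerate}[label=\textup{(\arabic*)},leftmargin=2.2em]
\item \emph{Adaptedness.} \(E^{T,g}_n\) is adapted because \(T\) is a stopping time, and it is nonnegative and finite because \(g(n)>0\).
\item \emph{Marginal e-value property.} \(\E_P[E_n^{T,g}]=P(T\leq n)/g(n)\leq1\) by \textup{(i)}.
\item \emph{Monotonicity.} The certificate is \(A_n=g(n)E_n^{T,g}=\1\{T\leq n\}\), which is nondecreasing in \(n\). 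Monotonicity of \(g\) is not even needed here.
\item \emph{Correct alarm time.} \(A_n\geq1\) exactly when \(T\leq n\), so \(T_g(E^{T,g})=\inf\{n\geq1:T\leq n\}=T\). This uses \(T\geq1\), and it holds on \(\{T=\infty\}\) with \(\inf\varnothing=\infty\).
\end{enumerate}

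No step is a genuine obstacle. The only point requiring care is in the forward direction: one must use the monotone certificate to identify \(\{T\leq n\}\) with a single fixed-time event, rather than attempting to control \(\sup_{j\leq n}E_j\).
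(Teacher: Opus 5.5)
Your proposal is correct and follows the paper's proof essentially step for step. For \textup{(ii)}\(\Rightarrow\)\textup{(i)}, the paper likewise uses monotonicity of \(A\) to reduce \(\{T_g(E)\leq n\}\) to the fixed-time event \(\{A_n\geq1\}\) and then applies Markov's inequality with the marginal e-value bound; for the converse, it checks the canonical envelope \(\1\{T\leq n\}/g(n)\) as you do. The only difference is that you state the equality \(\{T\leq n\}=\{A_n\geq1\}\), whereas the paper uses only the inclusion \(\subseteq\), which is all Markov's inequality needs. Your equality also holds, because \(A_n\geq1\) forces \(T\leq n\) by the definition of the infimum.
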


\begin{proof}
Assume \textup{(ii)}. Since \(A\) is nondecreasing,
\[
\{T_g(E)\leq n\}\subseteq\{A_n\geq1\}.
\]
Markov's inequality and \eqref{eq:marginal-evalues} imply
\[
P(T_g(E)\leq n)
\leq \E_P[A_n]
=g(n)\E_P[E_n]
\leq g(n).
\]

Conversely, assume \textup{(i)} and define \(E^{T,g}\) by \eqref{eq:canonical-tail-envelope}. Then \(\E_P[E_n^{T,g}]\leq1\), while
\[
g(n)E_n^{T,g}=\1\{T\leq n\}
\]
is nondecreasing. It jumps to one exactly at \(T\), proving the representation.
\end{proof}

\begin{example}[The linear tail envelope need not control ARL]\label{ex:uniform-tail-envelope}
Let \(b\geq2\) be an integer, reveal \(U\sim\operatorname{Unif}\{1,\ldots,b\}\) at time one, and set \(T=U\). Then
\[
P(T\leq n)=\min\left\{\frac nb,1\right\},
\]
so the linear deterministic tail bound holds with equality. Nevertheless,
\[
\E[T]=\frac{b+1}{2}<b.
\]
Thus deterministic linear-tail control does not imply ARL control. Conversely, Example~\ref{ex:early-lottery} shows that ARL control does not imply the linear tail bound. The two criteria are generally incomparable.

For the canonical envelope with \(g(n)=\min\{n/b,1\}\), evaluation at the data-dependent time \(T\) gives
\[
E_T^{T,g}=\frac{b}{T},
\qquad
\E[E_T^{T,g}]=\sum_{t=1}^b\frac1t>1.
\]
Hence the marginal e-values are not an e-process. This explicit optional-stopping failure is exactly what distinguishes deterministic-horizon control from optional-horizon control.
\end{example}

Quantile run-length requirements are special cases of \eqref{eq:general-tail-envelope}; one chooses \(g\) to encode the desired bounds at selected horizons and fills the gaps by a nondecreasing envelope. Such distribution-sensitive alternatives to ARL have been advocated in the change-detection literature, particularly when a large mean can conceal substantial early false-alarm risk; see \citet{mei2008}.

\section{Conditional hazards and e-hazard sequences}\label{app:e-hazard}

The most local false-alarm criterion controls the chance of alarming at the next observation, conditional on the complete current history. Let \(\alpha_n\in(0,1]\) be \(\cF_{n-1}\)-measurable. A stopping time \(T\) has \emph{historywise hazard control} if
\begin{equation}\label{eq:historywise-hazard}
P(T=n\mid\cF_{n-1})
\leq
\alpha_n\1\{T\geq n\}
\qquad P\text{-almost surely}
\end{equation}
for every \(P\in\cP\) and every \(n\geq1\).

\begin{definition}[E-hazard sequence]\label{def:e-hazard}
An \emph{e-hazard sequence} is a nonnegative adapted sequence \((E_n)_{n\geq1}\) satisfying
\begin{equation}\label{eq:conditional-evalue}
\E_P[E_n\mid\cF_{n-1}]\leq1
\qquad P\text{-almost surely}
\end{equation}
for every \(P\in\cP\) and every \(n\). Given a predictable boundary sequence \(1/\alpha_n\), define
\begin{equation}\label{eq:e-hazard-alarm}
T_\alpha(E):=\inf\left\{n\geq1:E_n\geq\frac1{\alpha_n}\right\}.
\end{equation}
\end{definition}

Each \(E_n\) is a one-step conditional e-value. Conditional e-values are standard ingredients in e-process constructions; see \citet{ramdaswang2025}. Here the terms are thresholded separately rather than multiplied.

\begin{theorem}[Universality of e-hazard sequences]\label{thm:e-hazard-universality}
Fix a predictable sequence \((\alpha_n)\) with values in \((0,1]\), and a stopping time \(T\). The following are equivalent.
\begin{enumerate}[label=\textup{(\roman*)},leftmargin=2.4em]
\item \(T\) satisfies historywise hazard control \eqref{eq:historywise-hazard}.
\item There exists an e-hazard sequence \(E\) such that \(T=T_\alpha(E)\).
\end{enumerate}
Under \textup{(i)}, the canonical sequence is
\begin{equation}\label{eq:canonical-e-hazard}
E_n^T:=\frac{\1\{T=n\}}{\alpha_n}.
\end{equation}
\end{theorem}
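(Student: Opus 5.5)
The plan follows the template of \Cref{thm:weighted-universality} and \Cref{thm:tail-envelope-universality}. The canonical certificate is the indicator \eqref{eq:canonical-e-hazard}. Both directions reduce to one identity: because \(\alpha_n\) is \(\cF_{n-1}\)-measurable and strictly positive,
\[
\E_P\!\left[\frac{\1\{T=n\}}{\alpha_n}\,\Big|\,\cF_{n-1}\right]=\frac{P(T=n\mid\cF_{n-1})}{\alpha_n}.
\]

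\emph{(i)\(\Rightarrow\)(ii).} Assume historywise hazard control and define \(E^T\) by \eqref{eq:canonical-e-hazard}.
\begin{enumerate}[label=\textup{(\alph*)},leftmargin=2.2em]
\item Nonnegativity is immediate.
\item Adaptedness holds because \(\{T=n\}\in\cF_n\) and \(\alpha_n\) is predictable.
\item By the identity above and \eqref{eq:historywise-hazard}, \(\E_P[E_n^T\mid\cF_{n-1}]\leq\1\{T\geq n\}\leq1\). So \(E^T\) is an e-hazard sequence.
\item For the crossing time, \(E_n^T\geq1/\alpha_n\) holds exactly when \(T=n\). For \(n<T\) we have \(E_n^T=0<1/\alpha_n\), and at \(n=T\) equality holds. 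Hence \(T_\alpha(E^T)=T\), including on \(\{T=\infty\}\).
\end{enumerate}

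\emph{(ii)\(\Rightarrow\)(i).} Let \(T=T_\alpha(E)\).
\begin{enumerate}[label=\textup{(\alph*)},leftmargin=2.2em]
\item On \(\{T=n\}\) we have \(\alpha_nE_n\geq1\). This gives the pathwise inequality \(\1\{T=n\}\leq \1\{T\geq n\}\,\alpha_nE_n\).
\item The indicator \(\1\{T\geq n\}\) and \(\alpha_n\) are both \(\cF_{n-1}\)-measurable, since \(T\) is a stopping time. Condition on \(\cF_{n-1}\) and pull them out:
\[
P(T=n\mid\cF_{n-1})\leq \alpha_n\1\{T\geq n\}\,\E_P[E_n\mid\cF_{n-1}]\leq\alpha_n\1\{T\geq n\}.
\]
\item This is \eqref{eq:historywise-hazard}, holding for every \(P\in\cP\) and every \(n\).
\end{enumerate}

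\emph{Main obstacle.} There is no real obstacle, only measure-theoretic bookkeeping. Pulling \(\alpha_n\) out of the conditional expectation needs \(\alpha_n\) to be predictable and positive, which is why \(\alpha_n\in(0,1]\) is assumed. The conditional expectation of the nonnegative \(E_n\) may be extended-valued, but \eqref{eq:conditional-evalue} makes it at most one. The at-risk indicator must be kept in direction (ii)\(\Rightarrow\)(i); it comes for free from \(\{T\geq n\}\in\cF_{n-1}\).
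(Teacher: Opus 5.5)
Your proof is correct and follows essentially the paper's argument. For (i)\(\Rightarrow\)(ii) you use the canonical indicator \(\1\{T=n\}/\alpha_n\) and the conditional-expectation identity; for (ii)\(\Rightarrow\)(i) you use the conditional Markov inequality, written in the pathwise form \(\1\{T=n\}\leq\1\{T\geq n\}\alpha_nE_n\) before pulling out the \(\cF_{n-1}\)-measurable factors.
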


\begin{proof}
Suppose \textup{(ii)} holds. The event \(\{T_\alpha(E)\geq n\}\) belongs to \(\cF_{n-1}\), and conditional Markov inequality gives
\begin{align*}
P(T_\alpha(E)=n\mid\cF_{n-1})
&=\1\{T_\alpha(E)\geq n\}
  P\left(E_n\geq\frac1{\alpha_n}\,\middle|\,\cF_{n-1}\right)\\
&\leq \alpha_n\1\{T_\alpha(E)\geq n\}
  \E_P[E_n\mid\cF_{n-1}]\\
&\leq\alpha_n\1\{T_\alpha(E)\geq n\}.
\end{align*}

Conversely, assume \textup{(i)} and define \(E^T\) by \eqref{eq:canonical-e-hazard}. Then
\[
\E_P[E_n^T\mid\cF_{n-1}]
=\frac{P(T=n\mid\cF_{n-1})}{\alpha_n}
\leq\1\{T\geq n\}
\leq1.
\]
Before \(T\), the sequence is zero; at \(T\), it equals the boundary \(1/\alpha_T\). Hence its first individual crossing is exactly \(T\).
\end{proof}

\begin{remark}[Survival-conditioned hazard]\label{rem:at-risk-hazard}
The weaker criterion
\[
P(T=n\mid T\geq n)\leq\alpha_n
\]
for deterministic \(\alpha_n\) has an analogous representation by \emph{at-risk e-values}, defined through
\[
\E_P[E_n\mid T\geq n]\leq1.
\]
The same canonical variable \(\alpha_n^{-1}\1\{T=n\}\) proves the converse. This version averages over histories among paths still under surveillance, whereas \eqref{eq:historywise-hazard} protects every \(\cF_{n-1}\)-history.
\end{remark}

One-step hazard bounds integrate into an optional-clock guarantee.

\begin{proposition}[From local hazards to an optional budget]\label{prop:hazard-clock}
Suppose \(q_n\in[0,1]\) is predictable and
\begin{equation}\label{eq:q-hazard}
P(T=n\mid\cF_{n-1})\leq q_n\1\{T\geq n\}
\end{equation}
under every \(P\in\cP\). Define the cumulative hazard clock
\[
C_n:=\sum_{j=1}^n q_j.
\]
Then, for every stopping horizon \(\sigma\), with both sides interpreted in \([0,\infty]\),
\begin{equation}\label{eq:hazard-clock-bound}
P(T\leq\sigma)\leq\E_P[C_\sigma].
\end{equation}
Equivalently, \(\1\{T\leq n\}\) is a strong \(C\)-clocked e-process.
\end{proposition}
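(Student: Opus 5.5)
The plan is to mimic the proof of \Cref{prop:hazard}, replacing the constant hazard $1/b$ by the predictable sequence $q_n$. Fix $P\in\cP$ and a stopping horizon $\sigma$. The clock $C$ is nonnegative, adapted (since each $q_j$ is $\cF_{j-1}$-measurable) and pathwise nondecreasing, so it is a genuine clock and $C_\sigma=\lim_N C_{\sigma\wedge N}=\sum_{n\geq1}q_n\1\{\sigma\geq n\}$ in $[0,\infty]$. This series representation of the stopped clock is the identity we will compare against.

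Next, decompose the false-alarm event by the alarm time. Using the convention $\{T\leq\sigma\}=\{T<\infty,T\leq\sigma\}$,
\[
P(T\leq\sigma)=\sum_{n=1}^\infty\E_P\bigl[\1\{\sigma\geq n\}\1\{T=n\}\bigr].
\]
Since $\{\sigma\geq n\}\in\cF_{n-1}$, we condition each term on $\cF_{n-1}$ and apply \eqref{eq:q-hazard}. This gives the bound $\E_P[\1\{\sigma\geq n\}\1\{T\geq n\}q_n]\leq\E_P[q_n\1\{\sigma\geq n\}]$, using $q_n\geq0$. Summing and applying Tonelli (all terms nonnegative) yields $\E_P[\sum_n q_n\1\{\sigma\geq n\}]=\E_P[C_\sigma]$, which is \eqref{eq:hazard-clock-bound}. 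Infinite horizons pose no problem, since no integrability of $\sigma$ was used and every interchange is between nonnegative quantities.

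For the equivalent formulation, we apply \Cref{thm:clocked-strong-universality} with $b=1$. Inequality \eqref{eq:hazard-clock-bound} is exactly optional-clock control \eqref{eq:optional-clock-control} at scale $1$, so the canonical witness $\1\{T\leq n\}$ is a strong $C$-clocked e-process. Alternatively, one can check directly that $\E_P[\1\{T\leq\tau\}]\leq\E_P[C_\tau]$ holds for every $\tau$, which is the bound just proved with $\sigma=\tau$. Note that with the $\liminf$ convention, $M_\tau=\1\{T<\infty,T\leq\tau\}$.

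The main obstacle is only bookkeeping: confirming that the conditioning step is legitimate when $q_n$ is random. This holds because $q_n\1\{\sigma\geq n\}$ is $\cF_{n-1}$-measurable and nonnegative, so $\E[\1\{\sigma\geq n\}\1\{T=n\}]=\E[\1\{\sigma\geq n\}P(T=n\mid\cF_{n-1})]$. The remaining step is matching the infinite-horizon conventions for $C_\sigma$, and monotone convergence handles that.
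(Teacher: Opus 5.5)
Your proposal is correct and matches the paper's proof essentially line for line. You decompose $\{T\leq\sigma\}$ by the alarm time, use $\{\sigma\geq n\}\in\cF_{n-1}$ to pass to the conditional hazard, bound it by $q_n\1\{\sigma\geq n\}$, sum via Tonelli to $\E_P[C_\sigma]$, and obtain the clocked formulation from \Cref{thm:clocked-strong-universality} at $b=1$ or directly; your explicit identifications $C_\sigma=\sum_{n\geq1}q_n\1\{\sigma\geq n\}$ and $M_\sigma=\1\{T<\infty,\,T\leq\sigma\}$ just spell out bookkeeping the paper leaves implicit.
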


\begin{proof}
Since \(\{\sigma\geq n\}\in\cF_{n-1}\), Tonelli's theorem and \eqref{eq:q-hazard} give
\begin{align*}
P(T\leq\sigma)
&=\sum_{n=1}^\infty
  \E_P\bigl[\1\{\sigma\geq n\}\1\{T=n\}\bigr]\\
&\leq\sum_{n=1}^\infty
  \E_P\bigl[\1\{\sigma\geq n\}q_n\1\{T\geq n\}\bigr]\\
&\leq\sum_{n=1}^\infty
  \E_P\bigl[\1\{\sigma\geq n\}q_n\bigr]
=\E_P[C_\sigma].
\end{align*}
The final statement follows from Theorem~\ref{thm:clocked-strong-universality} with \(b=1\), or directly from the same calculation.
\end{proof}

Taking \(q_n\equiv1/b\) recovers Proposition~\ref{prop:hazard}. In that case the canonical one-step certificates are \(E_n=b\1\{T=n\}\), while their cumulative sum is
\[
\sum_{j=1}^nE_j=b\1\{T\leq n\},
\]
the canonical strong e-detector from Theorem~\ref{thm:strong-universality}. The local and cumulative e-concepts therefore fit together exactly.

\section{Local-window control and rolling e-fields}\label{app:local-window}

A one-step hazard can be too granular for practice. A natural compromise controls the probability of at least one false alarm during each upcoming window of length \(m\), conditional on the procedure still running at the start of that window. Local unconditional and conditional false-alarm classes of this kind have been studied as alternatives to ARL, particularly for non-i.i.d. observations; see \citet{pergamenchtchikov2018} and the discussion surrounding \citet{mei2008}.

Fix \(m\in\N\) and \(\alpha\in(0,1]\). We use the ratio form
\begin{equation}\label{eq:local-window-ratio}
P(k<T\leq k+m)\leq\alpha P(T>k),
\qquad k\geq0,
\end{equation}
which remains meaningful when \(P(T>k)=0\) and is equivalent to
\[
P(k<T\leq k+m\mid T>k)\leq\alpha
\]
whenever the conditioning event has positive probability.

The criterion is indexed by every possible restart time \(k\), so the natural representing object has two time indices.

\begin{definition}[At-risk rolling e-field]\label{def:rolling-e-field}
For every \(k\geq0\), let
\[
E^{(k)}=(E_n^{(k)})_{n=k}^{k+m}
\]
be a nonnegative adapted row with \(E_k^{(k)}=0\). Define the rolling-consensus alarm
\begin{equation}\label{eq:rolling-consensus}
T_{\mathcal E}:=
\inf\left\{n\geq1:
\min_{(n-m)\vee0\leq k\leq n-1}E_n^{(k)}
\geq\frac1\alpha
\right\}.
\end{equation}
The field is an \emph{at-risk rolling \(m\)-window e-field} if, for every \(P\in\cP\), every \(k\geq0\), and every stopping time \(\tau\) taking values in \(\{k,\ldots,k+m\}\),
\begin{equation}\label{eq:at-risk-row-validity}
\E_P\bigl[E_\tau^{(k)}\1\{T_{\mathcal E}>k\}\bigr]
\leq P(T_{\mathcal E}>k).
\end{equation}
\end{definition}

When \(P(T_{\mathcal E}>k)>0\), \eqref{eq:at-risk-row-validity} says that the \(k\)-th row is an e-process under the survival-conditioned null law \(P(\cdot\mid T_{\mathcal E}>k)\), restricted to the next \(m\) observations. The definition is mathematically well posed: the field determines \(T_{\mathcal E}\) pathwise before validity is assessed. It is nevertheless self-referential as a construction criterion, because the validity of one row depends on the alarm generated by the entire field. Accordingly, \Cref{thm:rolling-universality} is best viewed as an exact representation theorem, not as a recipe whose rows can be verified independently. The stronger historywise formulation in \Cref{prop:historywise-window} has genuinely row-wise conditional validity.

\begin{theorem}[Universality for local conditional false-alarm control]\label{thm:rolling-universality}
Fix \(m\) and \(\alpha\), and let \(T\) be a stopping time. The following are equivalent.
\begin{enumerate}[label=\textup{(\roman*)},leftmargin=2.4em]
\item \(T\) satisfies \eqref{eq:local-window-ratio} for every \(P\in\cP\) and every \(k\geq0\).
\item There exists an at-risk rolling \(m\)-window e-field \(\mathcal E\) such that \(T=T_{\mathcal E}\).
\end{enumerate}
Under \textup{(i)}, a canonical field is
\begin{equation}\label{eq:canonical-window-field}
E_n^{(k)}:=\frac1\alpha\1\{k<T\leq n\},
\qquad k\leq n\leq k+m.
\end{equation}
\end{theorem}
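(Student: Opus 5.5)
The proof has two directions, following the pattern of \Cref{thm:weighted-universality,thm:tail-envelope-universality}. Soundness (ii)\(\Rightarrow\)(i) will follow from row validity at one stopping time per restart \(k\). Universality (i)\(\Rightarrow\)(ii) will follow from the canonical field \eqref{eq:canonical-window-field}, which we show crosses exactly at \(T\) and is row-valid.

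For (ii)\(\Rightarrow\)(i), fix \(P\) and \(k\geq0\), and write \(T=T_{\mathcal E}\). Define
\[
\tau:=\bigl(T\wedge(k+m)\bigr)\vee k,
\]
which is a stopping time with values in \(\{k,\ldots,k+m\}\). On \(\{k<T\leq k+m\}\) we have \(\tau=T\). Since \(k\) lies in the index range \(\{(T-m)\vee0,\ldots,T-1\}\), the minimum in \eqref{eq:rolling-consensus} at time \(T\) is at least \(1/\alpha\), so \(E^{(k)}_T\geq1/\alpha\). Hence
\[
\tfrac1\alpha P(k<T\leq k+m)\leq\E_P\bigl[E^{(k)}_\tau\1\{T>k\}\bigr]\leq P(T>k)
\]
by \eqref{eq:at-risk-row-validity}, using nonnegativity on the rest of \(\{T>k\}\). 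This is \eqref{eq:local-window-ratio}. The only care needed is in placing \(\tau\) inside the window and in checking that \(k\) belongs to the consensus range; this is where the consensus is by minimum over \emph{all} active rows.

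For (i)\(\Rightarrow\)(ii), adaptedness and \(E^{(k)}_k=0\) are immediate. Next we show that \(T_{\mathcal E}=T\) pathwise. At a time \(n<T\), every row has \(E^{(k)}_n=0<1/\alpha\), so there is no alarm. At \(n=T\), every \(k\) in \(\{(T-m)\vee0,\ldots,T-1\}\) satisfies \(k<T\leq n\leq k+m\), so each of these rows equals \(1/\alpha\) and the minimum reaches the boundary. Thus \(T_{\mathcal E}=T\), and on \(\{T=\infty\}\) there is never an alarm.

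It remains to check \eqref{eq:at-risk-row-validity} with \(T_{\mathcal E}=T\). For \(\tau\in\{k,\ldots,k+m\}\) we have
\[
E^{(k)}_\tau\1\{T>k\}=\tfrac1\alpha\1\{k<T\leq\tau\}\leq\tfrac1\alpha\1\{k<T\leq k+m\},
\]
so by (i) its expectation is at most \(P(T>k)\). I do not expect a real obstacle here; the main subtlety is the self-referential definition, which is harmless because \(T_{\mathcal E}\) is identified with \(T\) before validity is checked.
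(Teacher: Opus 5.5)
Your proof is correct and essentially the paper's. In (i)\(\Rightarrow\)(ii) you verify the canonical field exactly as the paper does. In (ii)\(\Rightarrow\)(i) the only difference is cosmetic: you test row \(k\) at the window-clipped alarm time \((T\wedge(k+m))\vee k\), whereas the paper tests it at the row's own first crossing of \(1/\alpha\), capped at \(k+m\). Both are admissible row-valued stopping times, and on \(\{k<T\leq k+m\}\) the stopped row value is at least \(1/\alpha\) in either case.
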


\begin{proof}
Assume \textup{(ii)} and write \(T=T_{\mathcal E}\). Fix \(k\), and let
\[
\tau_k:=
\left(\inf\left\{n\in\{k+1,\ldots,k+m\}:E_n^{(k)}\geq\frac1\alpha\right\}\right)\wedge(k+m),
\]
with the usual convention that the infimum of the empty set is infinity. If \(k<T\leq k+m\), then the \(k\)-th row is active at time \(T\), and the consensus condition forces \(E_T^{(k)}\geq1/\alpha\). Hence that row crosses by \(\tau_k\), and
\[
\frac1\alpha P(k<T\leq k+m)
\leq
\E_P\bigl[E_{\tau_k}^{(k)}\1\{T>k\}\bigr]
\leq P(T>k).
\]
This is \eqref{eq:local-window-ratio}.

Conversely, assume \textup{(i)} and define the field by \eqref{eq:canonical-window-field}. For any row-valued stopping time \(\tau\),
\begin{align*}
\E_P\bigl[E_\tau^{(k)}\1\{T>k\}\bigr]
&=\frac1\alpha P(k<T\leq\tau)\\
&\leq\frac1\alpha P(k<T\leq k+m)
\leq P(T>k),
\end{align*}
so the field is at-risk valid. Before \(T\), every active row is zero. At time \(T\), every active index \(k\in\{(T-m)\vee0,\ldots,T-1\}\) satisfies \(k<T\leq k+m\), so every active row equals \(1/\alpha\). The rolling-consensus crossing is therefore exactly \(T\).
\end{proof}

The minimum in \eqref{eq:rolling-consensus} expresses a logical consensus: an alarm lying in several overlapping windows must carry a certificate for every one of those windows. The canonical field shows that this loses no generality as a representation theorem. It may nevertheless be conservative and inconvenient as a design principle, especially because survival-conditioned row validity cannot be checked independently.

There is a stronger historywise version.

\begin{proposition}[Historywise local-window universality]\label{prop:historywise-window}
Replace \eqref{eq:local-window-ratio} by
\begin{equation}\label{eq:historywise-window}
P(k<T\leq k+m\mid\cF_k)
\leq\alpha\1\{T>k\}
\qquad P\text{-almost surely}.
\end{equation}
Then the same rolling-consensus representation is exact if row validity is strengthened to
\begin{equation}\label{eq:conditional-row-validity}
\E_P[E_\tau^{(k)}\mid\cF_k]\leq1
\qquad P\text{-almost surely}
\end{equation}
for every row-valued stopping time \(\tau\). The canonical field remains \eqref{eq:canonical-window-field}.
\end{proposition}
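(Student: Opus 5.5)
The plan mirrors the proof of \Cref{thm:rolling-universality}, with every unconditional probability replaced by its \(\cF_k\)-conditional counterpart. The claim has two directions: a field with conditional row validity \eqref{eq:conditional-row-validity} yields historywise window control \eqref{eq:historywise-window} for its consensus alarm, and conversely the canonical field \eqref{eq:canonical-window-field} is conditionally row valid and represents \(T\).

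\textbf{Soundness.} Fix \(k\) and let \(T=T_{\mathcal E}\). Define \(\tau_k\) as in the proof of \Cref{thm:rolling-universality}: the first time in \(\{k+1,\ldots,k+m\}\) at which row \(k\) reaches \(1/\alpha\), truncated at \(k+m\). This is a row-valued stopping time.

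On \(\{k<T\leq k+m\}\), index \(k\) is active at time \(T\), so the consensus minimum forces \(E^{(k)}_T\geq 1/\alpha\). Hence \(\tau_k\leq T\) and \(E^{(k)}_{\tau_k}\geq1/\alpha\). This gives the pathwise bound
\[
\1\{k<T\leq k+m\}\leq\alpha E^{(k)}_{\tau_k}\1\{T>k\}.
\]
Since \(\{T>k\}\in\cF_k\), we can take conditional expectations given \(\cF_k\) and pull the indicator out. Applying \eqref{eq:conditional-row-validity} then yields
\[
P(k<T\leq k+m\mid\cF_k)\leq\alpha\1\{T>k\}\E_P[E^{(k)}_{\tau_k}\mid\cF_k]\leq\alpha\1\{T>k\}.
\]

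\textbf{Universality.} Assume \eqref{eq:historywise-window} and take the canonical field. For a row-valued stopping time \(\tau\in\{k,\ldots,k+m\}\), monotonicity in \(n\) gives
\[
E^{(k)}_\tau=\alpha^{-1}\1\{k<T\leq\tau\}\leq\alpha^{-1}\1\{k<T\leq k+m\}.
\]
Conditioning on \(\cF_k\) and using \eqref{eq:historywise-window} gives \(\E_P[E^{(k)}_\tau\mid\cF_k]\leq\1\{T>k\}\leq1\). Each row is adapted and satisfies \(E^{(k)}_k=0\), because \(\{k<T\leq n\}\in\cF_n\).

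The crossing identity is purely pathwise and is the same as in \Cref{thm:rolling-universality}. Before \(T\), every row is zero. At \(T\), every active index \(k\in\{(T-m)\vee0,\ldots,T-1\}\) satisfies \(k<T\leq k+m\), so every active row equals \(1/\alpha\). Hence \(T_{\mathcal E}=T\).

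\textbf{Main obstacle.} Everything is routine except the measure-theoretic care in the soundness step. We need \(\tau_k\) to be a genuine stopping time with values in \(\{k,\ldots,k+m\}\), which holds because each row is adapted. We need \(\{T>k\}\in\cF_k\), which holds because \(T_{\mathcal E}\) is a stopping time built from adapted rows. The conditional inequalities are almost-sure statements, and the argument stays almost-sure throughout. Beyond these checks, no localization is needed, since all stopping times involved are bounded.
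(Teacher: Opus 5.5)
Your proposal is correct and follows the paper's proof. For soundness you apply the conditional Markov inequality at the truncated first crossing \(\tau_k\) of row \(k\), made explicit through the pathwise bound \(\1\{k<T\leq k+m\}\leq\alpha E^{(k)}_{\tau_k}\1\{T>k\}\) and the fact that \(\{T>k\}\in\cF_k\); for the converse you use the same monotonicity-plus-conditioning computation for the canonical field, and the pathwise crossing identity carries over unchanged from \Cref{thm:rolling-universality}.
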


\begin{proof}
Conditional Markov inequality applied to the first crossing of the \(k\)-th row proves soundness. Conversely,
\[
\E_P[E_\tau^{(k)}\mid\cF_k]
=\frac1\alpha P(k<T\leq\tau\mid\cF_k)
\leq\frac1\alpha P(k<T\leq k+m\mid\cF_k)
\leq\1\{T>k\}
\leq1.
\]
The pathwise recovery argument is unchanged.
\end{proof}

For \(m=1\), Proposition~\ref{prop:historywise-window} reduces to the e-hazard representation in Theorem~\ref{thm:e-hazard-universality}. The standard survival-conditioned version in Theorem~\ref{thm:rolling-universality} reduces to the at-risk hazard notion in Remark~\ref{rem:at-risk-hazard}.

An unconditional local-window criterion,
\[
P(k<T\leq k+m)\leq\alpha,
\]
has the same representation with ordinary finite-horizon e-process rows, obtained by dropping \(\1\{T>k\}\) and \(P(T>k)\) from \eqref{eq:at-risk-row-validity}. Thus unconditional, survival-conditioned, and historywise local control correspond to three increasingly strong notions of row-wise e-validity.

\section{Residual criteria and restartable detector fields}\label{app:residual-arl}

A global ARL guarantee is evaluated only at startup. Residual criteria ask whether comparable protection remains after the procedure has survived to an arbitrary age. They naturally produce restart-indexed fields rather than one scalar process. There are two different versions, mirroring the weak--strong distinction in the main text.

\subsection{Residual ARL and weak rows}

The survival-conditioned residual ARL criterion is
\begin{equation}\label{eq:residual-arl}
\E_P[T-k\mid T>k]\geq b
\end{equation}
for every \(P\in\cP\) and every \(k\) with \(P(T>k)>0\).

For each such \(k\), let
\begin{equation}\label{eq:conditional-null-class}
\cP_k^T:=
\left\{P(\,\cdot\mid T>k):P\in\cP,\ P(T>k)>0\right\},
\end{equation}
viewed on the shifted filtration \((\cF_{k+j})_{j\geq0}\).

\begin{definition}[Restartable weak e-detector field]\label{def:restartable-field}
A \emph{restartable weak e-detector field at scale \(b\)} for \(T\) is a family
\[
\mathcal M=\{M^{(k)}=(M_j^{(k)})_{j\geq0}:k\geq0\}
\]
such that, for each \(k\), the shifted process \(M^{(k)}\) is a weak \(\cP_k^T\)-e-detector and
\begin{equation}\label{eq:restart-recovery}
T-k=T_b(M^{(k)})
\qquad\text{under every law in }\cP_k^T.
\end{equation}
\end{definition}

\begin{theorem}[Universality for residual ARL]\label{thm:residual-arl-universality}
Fix \(b>0\) and an alarm time \(T\). The following are equivalent.
\begin{enumerate}[label=\textup{(\roman*)},leftmargin=2.4em]
\item \(T\) satisfies residual ARL control \eqref{eq:residual-arl} for every null law and every admissible restart time \(k\).
\item \(T\) admits a restartable weak e-detector field at scale \(b\).
\end{enumerate}
Under \textup{(i)}, the canonical row started at \(k\) is
\begin{equation}\label{eq:canonical-restart-field}
M_j^{(k)}:=b\1\{k<T\leq k+j\},
\qquad j\geq0.
\end{equation}
\end{theorem}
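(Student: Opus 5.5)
The plan is to reduce everything, row by row, to the ARL universality theorem \Cref{thm:weak-universality}, applied on the shifted space with the conditioned null class \(\cP_k^T\). The key observation is that \eqref{eq:residual-arl} at restart time \(k\) is exactly the statement \(\inf_{Q\in\cP_k^T}\E_Q[T-k]\geq b\). Moreover, under every \(Q\in\cP_k^T\) the shifted time \(T-k\) is a stopping time for \((\cF_{k+j})_{j\geq0}\) that is almost surely at least one, because \(Q(T>k)=1\). So it is an alarm time in the sense of the paper.

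For (ii)\(\Rightarrow\)(i), I would fix \(k\) with \(P(T>k)>0\) and set \(Q=P(\cdot\mid T>k)\in\cP_k^T\). By \eqref{eq:restart-recovery}, \(T-k=T_b(M^{(k)})\) \(Q\)-almost surely. Since \(M^{(k)}\) is a weak \(\cP_k^T\)-e-detector, \Cref{thm:weak-soundness} on the shifted filtration gives \(\E_Q[T_b(M^{(k)})]\geq b\), which is \eqref{eq:residual-arl}. The one point to check is that \Cref{thm:weak-soundness} uses nothing beyond the filtered-space structure. That holds, because its proof is a single pathwise bound followed by an expectation.

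For (i)\(\Rightarrow\)(ii), I would take the canonical row \eqref{eq:canonical-restart-field}. I would verify three things in order. First, adaptedness: \(\{k<T\leq k+j\}\in\cF_{k+j}\) because \(T\) is a stopping time, and \(M_0^{(k)}=0\). Second, recovery: under \(Q\in\cP_k^T\) we have \(T>k\) almost surely, so \(M_j^{(k)}=b\1\{T-k\leq j\}\) \(Q\)-a.s. Its level-\(b\) crossing is therefore \(T-k\) (with value \(\infty\) when \(T=\infty\)), and it crosses every level \(c\leq b\) at the same time and never crosses levels \(c>b\). Third, weak validity: this is the computation in the proof of \Cref{thm:weak-universality}, since \(M^{(k)}\) coincides \(Q\)-a.s. with the canonical weak detector \(b\1\{T-k\leq j\}\) for the shifted alarm time, and \(\E_Q[T-k]\geq b\) by (i).

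The step I expect to need the most care is that the row is defined on all of \(\Omega\), while the conditioned laws live on \(\{T>k\}\). On \(\{T\leq k\}\) the indicator \(\1\{k<T\leq k+j\}\) vanishes identically, which keeps the row adapted without forcing a crossing there. That set is null under every \(Q\in\cP_k^T\), so it affects neither \eqref{eq:restart-recovery} nor the weak inequality, which is why the equality is stated only under laws in \(\cP_k^T\). Restart times \(k\) with \(P(T>k)=0\) for every \(P\) make \(\cP_k^T\) empty, so there is nothing to check for them. I would also note that \(T=\infty\) with positive \(Q\)-probability gives \(\E_Q[T-k]=\infty\), so those cases are automatic under the extended-expectation convention.
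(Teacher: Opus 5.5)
Your proposal is correct and follows essentially the same route as the paper. For (ii)\(\Rightarrow\)(i) you apply weak soundness (\Cref{thm:weak-soundness}) under each conditioned law in \(\cP_k^T\); for (i)\(\Rightarrow\)(ii) you use the canonical row, whose crossing at every level \(c\leq b\) is \(T-k\) and whose killed stopped value has conditional mean at most \(b\leq\E[T-k\mid T>k]\), while your extra care about the row vanishing on the \(\cP_k^T\)-null set \(\{T\leq k\}\) just makes explicit what the paper leaves implicit.
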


\begin{proof}
If \textup{(ii)} holds, apply \Cref{thm:weak-soundness} under each conditional null class \(\cP_k^T\). The level-\(b\) crossing has conditional mean at least \(b\), which is exactly \eqref{eq:residual-arl}.

Conversely, suppose \textup{(i)} holds and define \(M^{(k)}\) by \eqref{eq:canonical-restart-field}. Under any law in \(\cP_k^T\), the process jumps from zero to \(b\) after exactly \(T-k\) shifted observations, so \eqref{eq:restart-recovery} holds. At every level \(0<c\leq b\), its crossing time is \(T-k\), and the killed stopped value satisfies
\[
\E\bigl[(M_{T-k}^{(k)})^\dagger\bigr]
=bP(T<\infty\mid T>k)
\leq b
\leq\E[T-k\mid T>k].
\]
At levels \(c>b\), it never crosses. Hence every row is a weak e-detector under the corresponding survival-conditioned null class.
\end{proof}

\begin{example}[ARL does not imply residual ARL]\label{ex:arl-not-residual}
For the early-alarm lottery in \Cref{ex:early-lottery}, \(T\) equals one or seven with equal probability and has \(\E[T]=4\). Conditional on survival to time six, however, \(T=7\) with probability one, so
\[
\E[T-6\mid T>6]=1.
\]
Thus the rule has a weak e-detector representation at scale four, but it has no restartable weak e-detector field at that scale.
\end{example}

\subsection{Historywise optional residual control and strong rows}

A strong restart condition allows the future horizon to depend on the observations after the restart and requires validity conditional on the entire current history. Call a random variable \(\tau\in\Nzero\cup\{\infty\}\) a \emph{shifted stopping time after \(k\)} if \(\{\tau\leq j\}\in\cF_{k+j}\) for every \(j\geq0\). Consider
\begin{equation}\label{eq:historywise-residual-optional}
P(k<T\leq k+\tau\mid\cF_k)
\leq
\frac{\1\{T>k\}}{b}\E_P[\tau\mid\cF_k]
\end{equation}
for every \(P\in\cP\), every \(k\geq0\), and every bounded shifted stopping time \(\tau\). Bounded horizons suffice by conditional monotone convergence.

\begin{definition}[Restartable strong e-detector field]\label{def:restartable-strong-field}
A \emph{restartable strong e-detector field at scale \(b\)} for \(T\) is a family \(\{M^{(k)}:k\geq0\}\) such that, under every survival-conditioned law in \(\cP_k^T\), the row \(M^{(k)}\) is a strong e-detector on the shifted filtration and its level-\(b\) crossing is \(T-k\).
\end{definition}

\begin{theorem}[Strong restartability is conditional-hazard control]\label{thm:strong-restart-hazard}
Fix \(b>0\). The following are equivalent.
\begin{enumerate}[label=\textup{(\roman*)},leftmargin=2.4em]
\item For every \(P\in\cP\) and every \(n\geq1\),
\begin{equation}\label{eq:restart-historywise-hazard}
P(T=n\mid\cF_{n-1})
\leq b^{-1}\1\{T\geq n\}
\qquad P\text{-almost surely}.
\end{equation}
\item The historywise optional residual inequality \eqref{eq:historywise-residual-optional} holds.
\item \(T\) admits a restartable strong e-detector field at scale \(b\).
\end{enumerate}
Under these conditions, the canonical strong row is again
\[
M_j^{(k)}=b\1\{k<T\leq k+j\}.
\]
\end{theorem}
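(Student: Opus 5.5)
I would prove the cycle (i)\(\Rightarrow\)(ii)\(\Rightarrow\)(iii)\(\Rightarrow\)(i), adapting the Tonelli computation from \Cref{prop:hazard} to conditional expectations given \(\cF_k\), and using the canonical row \(M_j^{(k)}=b\1\{k<T\leq k+j\}\) as the bridge to representability.

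\emph{Step (i)\(\Rightarrow\)(ii).} Fix \(k\) and a bounded shifted stopping time \(\tau\). For \(j\geq1\), the event \(\{\tau\geq j\}\) lies in \(\cF_{k+j-1}\). Hence
\[
P(k<T\leq k+\tau\mid\cF_k)=\sum_{j\geq1}\E_P\bigl[\1\{\tau\geq j\}\,P(T=k+j\mid\cF_{k+j-1})\bigm|\cF_k\bigr].
\]
By \eqref{eq:restart-historywise-hazard} this is at most
\[
b^{-1}\sum_{j\geq1}\E_P\bigl[\1\{\tau\geq j\}\1\{T\geq k+j\}\mid\cF_k\bigr].
\]
The factor \(\1\{T>k\}\in\cF_k\) can be pulled out, because \(\{T\geq k+j\}\subseteq\{T>k\}\). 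Dropping the remaining indicator then gives \(b^{-1}\1\{T>k\}\E_P[\tau\mid\cF_k]\). Unbounded \(\tau\) follow by conditional monotone convergence.

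\emph{Step (ii)\(\Rightarrow\)(iii).} Take the canonical rows. Their level-\(b\) crossing is \(T-k\) on \(\{T>k\}\), which carries full mass under \(P(\cdot\mid T>k)\). To show each row is strong under \(Q=P(\cdot\mid T>k)\), fix a bounded shifted stopping time \(\tau\). Then
\[
\E_Q[M^{(k)}_\tau]=\frac{b\,\E_P\bigl[P(k<T\leq k+\tau\mid\cF_k)\bigr]}{P(T>k)}.
\]
Applying (ii) bounds this by
\[
\frac{\E_P\bigl[\1\{T>k\}\E_P[\tau\mid\cF_k]\bigr]}{P(T>k)}=\E_Q[\tau],
\]
again using \(\{T>k\}\in\cF_k\). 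Here \(\tau\) is a stopping time for the shifted filtration, and conditioning on an \(\cF_k\)-event does not alter that property.

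\emph{Step (iii)\(\Rightarrow\)(i).} This is where the work lies, since \Cref{def:restartable-strong-field} only gives validity under the survival-conditioned law \(P(\cdot\mid T>k)\), averaged over \(\cF_k\). I would localize to histories using the freedom to choose shifted stopping times. Fix \(n\geq1\), set \(k=n-1\), and take any \(A\in\cF_{n-1}\) with \(A\subseteq\{T\geq n\}\). Define \(\tau=\1_A\), which is a shifted stopping time after \(k\) because \(A\in\cF_k\). By \Cref{prop:strong-optional} applied to the row \(M^{(k)}\) under \(Q=P(\cdot\mid T>k)\), and since \(T_b(M^{(k)})=T-k\),
\[
Q(T-k\leq\tau)\leq\frac{\E_Q[\tau]}{b}.
\]
This reads \(P(T=n,A)\leq b^{-1}P(A)\). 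As \(A\) ranges over \(\cF_{n-1}\)-subsets of \(\{T\geq n\}\), we get \(P(T=n\mid\cF_{n-1})\leq b^{-1}\) almost surely on \(\{T\geq n\}\). On the complement \(\{T<n\}\) the conditional probability vanishes, by the footnote to \Cref{prop:hazard}. The degenerate case \(P(T>k)=0\) is trivial.

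The main obstacle is checking that the given row really is a strong e-detector for stopping times that depend on pre-\(k\) information, such as \(\tau=\1_A\). I read ``on the shifted filtration \((\cF_{k+j})_{j\geq0}\)'' as allowing exactly this, since \(\cF_k\) is the time-zero \(\sigma\)-field. Under that reading the localization step goes through, and it is this full history at time zero that distinguishes the strong restart condition from \eqref{eq:residual-arl}.
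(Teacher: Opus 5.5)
Your proposal is correct and follows essentially the same route as the paper: the conditional Tonelli computation for (i)$\Rightarrow$(ii), integrating (ii) over $\{T>k\}$ to make the canonical row strong under $P(\cdot\mid T>k)$, and the one-step horizon $\tau_A=\1_A$ with $A\in\cF_k$ to recover the hazard bound. The differences are cosmetic. You close a cycle instead of also deriving (ii)$\Rightarrow$(i) from $\tau\equiv1$. You also restrict $A$ to subsets of $\{T\geq n\}$, which is equivalent because $\{T\geq n\}\in\cF_{n-1}$.
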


\begin{proof}
Assume \textup{(i)}. For a bounded shifted stopping time \(\tau\), the event \(\{\tau\geq j\}\) belongs to \(\cF_{k+j-1}\). Conditional Tonelli and the tower property give
\begin{align*}
P(k<T\leq k+\tau\mid\cF_k)
&=\sum_{j\geq1}\E_P\!\left[\1\{\tau\geq j\}\1\{T=k+j\}\mid\cF_k\right]\\
&\leq \frac1b\sum_{j\geq1}\E_P\!\left[\1\{\tau\geq j\}\1\{T\geq k+j\}\mid\cF_k\right]\\
&\leq \frac{\1\{T>k\}}b
   \sum_{j\geq1}P(\tau\geq j\mid\cF_k)\\
&=\frac{\1\{T>k\}}b\E_P[\tau\mid\cF_k],
\end{align*}
which is \textup{(ii)}. Conversely, \textup{(ii)} with \(\tau\equiv1\) yields \textup{(i)}.

Under \textup{(ii)}, fix \(k\) and \(P\) with \(P(T>k)>0\), integrate \eqref{eq:historywise-residual-optional}, and divide by \(P(T>k)\). For every bounded shifted stopping time \(\tau\),
\[
bP(T-k\leq\tau\mid T>k)
\leq \E_P[\tau\mid T>k].
\]
Therefore the canonical row is strong under \(P(\cdot\mid T>k)\) and crosses at \(T-k\), proving \textup{(iii)}.

Finally, assume \textup{(iii)}. Fix \(k\), a null law with \(P(T>k)>0\), and an event \(A\in\cF_k\). Under \(Q=P(\cdot\mid T>k)\), the shifted horizon \(\tau_A:=\1_A\) is a stopping time. Optional-horizon validity of the row gives
\[
Q(A\cap\{T=k+1\})
=Q(T-k\leq\tau_A)
\leq \frac{Q(A)}b.
\]
Since this holds for every \(A\in\cF_k\), it implies \eqref{eq:restart-historywise-hazard} at time \(k+1\). Thus \textup{(iii)} implies \textup{(i)}.
\end{proof}

The theorem shows that, when the full shifted filtration is retained, a uniformly strong restart property is not a genuinely intermediate criterion: it is exactly the historywise hazard condition from \Cref{prop:hazard}. It does imply global strong validity by taking \(k=0\), and the implication is strict. For example, the deterministic run length \(T\equiv b\) in \Cref{ex:deterministic-strong} is strongly representable at scale \(b>1\), but after survival to time \(b-1\) the residual alarm time is one with probability one, violating \eqref{eq:historywise-residual-optional}. By contrast, the weak residual criterion only controls a conditional mean and does not collapse to a one-step hazard requirement.

\section{Repeated alarms and counting e-detectors}\label{app:counting}

When monitoring restarts after an alarm, the first run length no longer captures the full type-I behavior. Repeated or multi-cyclic operation is classical in statistical process control and in stationary formulations of Shiryaev--Roberts monitoring; see, for example, \citet{pollaktartakovsky2009}. Let \(N_n\) denote the number of false alarms declared by time \(n\), with \(N_0=0\), and assume \(N\) is adapted, integer-valued, and pathwise nondecreasing.

A natural optional count-rate criterion is
\begin{equation}\label{eq:optional-count-rate}
\E_P[N_\tau]\leq\lambda\E_P[\tau]
\end{equation}
for every \(P\in\cP\) and every integrable stopping time \(\tau\), where \(\lambda>0\) is the allowed expected false-alarm intensity.

\begin{proposition}[Counting e-detectors]\label{prop:counting-edetector}
The optional count-rate condition \eqref{eq:optional-count-rate} holds if and only if
\begin{equation}\label{eq:normalized-count-detector}
M_n:=\frac{N_n}{\lambda}
\end{equation}
is a strong \(\cP\)-e-detector.
\end{proposition}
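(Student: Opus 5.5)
The statement is essentially a rescaling of the definition, so I plan a direct two-way argument, using \Cref{lem:localization} to handle the mismatch between the horizon classes. The criterion \eqref{eq:optional-count-rate} quantifies over integrable stopping times, while \Cref{def:strong} quantifies over all stopping times with extended expectations. By \Cref{lem:localization} with the calendar clock \(C_n=n\), strong validity of any nonnegative adapted process is equivalent to validity at bounded horizons. Bounded horizons are integrable, so both conditions reduce to the same bounded-horizon statement.

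First I record the structural requirements of \Cref{def:strong} for \(M=N/\lambda\). Since \(N\) is adapted, integer-valued, nonnegative, and \(N_0=0\), the process \(M\) is nonnegative and adapted with \(M_0=0\). So only the expectation inequality needs checking.

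For the forward direction, assume \eqref{eq:optional-count-rate}. Take any bounded stopping time \(\tau\); it is integrable and finite, so \(M_\tau=N_\tau/\lambda\). Dividing \eqref{eq:optional-count-rate} by \(\lambda>0\) gives \(\E_P[M_\tau]\leq\E_P[\tau]\). \Cref{lem:localization} then extends this to every stopping time, so \(M\) is a strong \(\cP\)-e-detector.

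For the converse, assume \(M\) is strong and take any integrable \(\tau\). Then \(\tau<\infty\) almost surely, so the \(\liminf\) convention gives \(M_\tau=N_\tau/\lambda\). Multiplying \eqref{eq:strong-def} by \(\lambda\) yields \eqref{eq:optional-count-rate}.

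The only point needing care is matching the stopped-value conventions. At an integrable horizon, \(\liminf_N N_{\tau\wedge N}/\lambda\) coincides with \(N_\tau/\lambda\) almost surely because \(\tau\) is finite almost surely; monotonicity of \(N\) makes this immediate even without that remark. I do not expect any real obstacle beyond this bookkeeping.
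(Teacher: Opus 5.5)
Your proposal is correct and is essentially the paper's argument, which just substitutes \(M_n=N_n/\lambda\) into the defining inequality \eqref{eq:strong-def} of a strong e-detector. You also use \Cref{lem:localization} to reconcile the integrable horizons in \eqref{eq:optional-count-rate} with the arbitrary horizons in \Cref{def:strong}, and you check that \(M\geq0\) and \(M_0=0\); this spells out bookkeeping that the paper leaves implicit.
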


\begin{proof}
Substituting \eqref{eq:normalized-count-detector} into the defining inequality of a strong e-detector gives exactly \eqref{eq:optional-count-rate}.
\end{proof}

Although elementary, this equivalence has useful consequences. Let
\begin{equation}\label{eq:jth-alarm}
S_j:=\inf\{n\geq1:N_n\geq j\}
\end{equation}
be the time of the \(j\)-th false alarm. Then \(S_j=T_{j/\lambda}(M)\), and the strong e-detector results give
\begin{align}
\E_P[S_j]&\geq\frac{j}{\lambda},\label{eq:jth-alarm-mean}\\
P(S_j\leq\sigma)&\leq\frac{\lambda\E_P[\sigma]}{j}
\label{eq:jth-alarm-optional}
\end{align}
for every integrable stopping horizon \(\sigma\). Thus a single counting e-detector controls every successive false-alarm time.

A predictable intensity bound gives a more local representation. Write \(\Delta N_n=N_n-N_{n-1}\), let \(\lambda_n>0\) be predictable, and define
\[
C_n:=\sum_{j=1}^n\lambda_j.
\]

\begin{proposition}[Conditional e-values for false-alarm increments]\label{prop:count-increments}
The following are equivalent for each \(n\):
\begin{enumerate}[label=\textup{(\roman*)},leftmargin=2.4em]
\item
\begin{equation}\label{eq:predictable-count-intensity}
\E_P[\Delta N_n\mid\cF_{n-1}]\leq\lambda_n
\qquad P\text{-almost surely for every }P\in\cP.
\end{equation}
\item \(E_n:=\Delta N_n/\lambda_n\) is a conditional e-value, meaning
\[
\E_P[E_n\mid\cF_{n-1}]\leq1.
\]
\end{enumerate}
Moreover, if \eqref{eq:predictable-count-intensity} holds for every \(n\), then \(N\) is a strong \(C\)-clocked e-process:
\begin{equation}\label{eq:count-clocked}
\E_P[N_\tau]\leq\E_P[C_\tau]
\end{equation}
for every bounded stopping time \(\tau\), and by monotone convergence for every stopping time for which the right side is finite.
\end{proposition}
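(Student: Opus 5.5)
The equivalence of \textup{(i)} and \textup{(ii)} is pure bookkeeping. Since \(\lambda_n>0\) is \(\cF_{n-1}\)-measurable, it can be pulled out of the conditional expectation:
\[
\E_P[\Delta N_n/\lambda_n\mid\cF_{n-1}]=\lambda_n^{-1}\E_P[\Delta N_n\mid\cF_{n-1}]
\]
almost surely. Nonnegativity of \(\Delta N_n\), which comes from monotonicity of \(N\), makes this conditional expectation well defined in \([0,\infty]\) with no integrability assumption. Dividing or multiplying by \(\lambda_n\) then turns one inequality into the other. I would also note that \(E_n\) is nonnegative and adapted, so it is a conditional e-value in the sense of \Cref{def:e-hazard}.

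For the clocked statement I would copy the proof of \Cref{prop:hazard-clock}. Fix \(P\) and a bounded stopping time \(\tau\leq K\). Since \(N_0=0\), we can write
\[
N_\tau=\sum_{n=1}^{K}\Delta N_n\1\{\tau\geq n\}.
\]
The event \(\{\tau\geq n\}\) lies in \(\cF_{n-1}\). Taking expectations term by term (Tonelli, everything nonnegative) and conditioning on \(\cF_{n-1}\) gives
\[
\E_P[N_\tau]=\sum_{n=1}^K\E_P\bigl[\1\{\tau\geq n\}\E_P[\Delta N_n\mid\cF_{n-1}]\bigr]\leq\sum_{n=1}^K\E_P\bigl[\1\{\tau\geq n\}\lambda_n\bigr]=\E_P[C_\tau].
\]
The last equality uses \(C_0=0\) and the same telescoping representation for \(C\). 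The clock \(C\) is nonnegative, adapted (being predictable) and nondecreasing, so it is a legitimate clock.

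To extend to general stopping times, I would invoke \Cref{lem:localization}. With \(N_\tau=\liminf N_{\tau\wedge m}\), which is in fact a monotone limit, and \(C_\tau=\lim C_{\tau\wedge m}\), it gives \(\E_P[N_\tau]\leq\E_P[C_\tau]\) in \([0,\infty]\) for every stopping time. In particular this holds whenever the right side is finite, which is exactly the claimed extension. By \Cref{def:clocked-eprocess}, this says \(N\) is a strong \(C\)-clocked e-process.

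The only point needing care is that conditional expectations of possibly non-integrable nonnegative variables must be handled in the extended sense, where the tower property still holds for nonnegative variables. I see no real obstacle.
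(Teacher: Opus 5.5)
Your proposal is correct and matches the paper's proof: the equivalence is division by the positive predictable \(\lambda_n\), and the clocked bound comes from writing \(N_\tau=\sum_{n}\1\{\tau\geq n\}\Delta N_n\), using \(\{\tau\geq n\}\in\cF_{n-1}\), and conditioning term by term. The only cosmetic difference is that the paper sums over all \(n\geq1\) at once, which already covers unbounded \(\tau\) by Tonelli, whereas you treat bounded \(\tau\) first and then extend via \Cref{lem:localization}; both routes give the inequality in \([0,\infty]\) for every stopping time.
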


\begin{proof}
The equivalence is immediate by division by the positive predictable variable \(\lambda_n\). For the cumulative statement, \(\{\tau\geq n\}\in\cF_{n-1}\), so
\begin{align*}
\E_P[N_\tau]
&=\sum_{n=1}^\infty
  \E_P\bigl[\1\{\tau\geq n\}\Delta N_n\bigr]\\
&\leq\sum_{n=1}^\infty
  \E_P\bigl[\1\{\tau\geq n\}\lambda_n\bigr]
=\E_P[C_\tau].
\end{align*}
\end{proof}

\begin{example}[Conditionally Bernoulli false alarms]\label{ex:bernoulli-counts}
Let \(\lambda\in(0,1)\). Suppose \(\Delta N_n\in\{0,1\}\) and
\[
P(\Delta N_n=1\mid\cF_{n-1})\leq\lambda
\]
at every time. Then \(\Delta N_n/\lambda\) is a conditional e-value, \(N_n/\lambda\) is a strong e-detector, and the \(j\)-th false alarm has mean at least \(j/\lambda\). Under equality with conditionally independent increments, \(S_j\) has the negative-binomial waiting-time law with mean exactly \(j/\lambda\).
\end{example}

The deterministic criterion \(\E[N_n]\leq\lambda n\) for every \(n\) corresponds only to a sequence of fixed-time e-values \(N_n/(\lambda n)\); it does not by itself imply optional count-rate control.

\begin{example}[Fixed-time count control is not optional]\label{ex:count-fixed-not-optional}
Let \(b\geq2\) be an integer, set \(\lambda=1/b\), reveal \(U\sim\operatorname{Unif}\{1,\ldots,b\}\) at time one, and define
\[
N_n:=\1\{n\geq U\}.
\]
Then
\[
\E[N_n]=\min\{n/b,1\}\leq\lambda n
\]
for every deterministic \(n\). At the stopping time \(\tau=U\), however,
\[
\E[N_\tau]=1
>\frac{b+1}{2b}
=\lambda\E[\tau].
\]
Thus deterministic expected-count bounds do not imply optional count-rate control.
\end{example}

The still weaker asymptotic condition
\[
\limsup_{n\to\infty}\frac{\E[N_n]}{n}\leq\lambda
\]
can be written as \(\limsup_n\E[M_n]/n\leq1\) for \(M=N/\lambda\), but this is merely an asymptotic normalization, not an anytime-valid e-concept. For repeated alarms, optional or predictable intensity control is therefore the most natural analogue of strong e-detector validity.

\section{Simultaneous representation of a threshold family}\label{app:simultaneous}

The weak universality theorem represents one alarm time at one target level. A stronger question asks when a family \(\{T_b:b>0\}\) can be realized as \emph{all} threshold crossings of a single weak e-detector.

Call the family \emph{threshold coherent} if, pathwise,
\begin{align}
 b<c&\implies T_b\leq T_c,\label{eq:family-monotone}\\
 T_b&=\sup_{q\in\mathbb Q_+,\,q<b}T_q
 \qquad(b>0).\label{eq:family-left-cont}
\end{align}
The second condition is the left-continuity forced by crossings of a real-valued process. (In \Cref{ex:family-insufficient}, the ceiling function causes no problem because \(b\mapsto\lceil b\rceil\) is left-continuous.) Define the canonical running level
\begin{equation}\label{eq:canonical-family-level}
A_n:=\sup\{q\in\mathbb Q_+:T_q\leq n\},
\qquad \sup\varnothing:=0.
\end{equation}
It is adapted and pathwise nondecreasing. We call the family \emph{locally bounded} if \(A_n<\infty\) pathwise for every finite \(n\); this is necessary for representation by a finite-valued process.

\begin{theorem}[Simultaneous-family characterization]\label{thm:simultaneous-family}
A family \(\{T_b:b>0\}\) is representable as
\[
T_b=T_b(M)\qquad\text{for every }b>0
\]
by one finite-valued weak \(\cP\)-e-detector \(M\) if and only if it is threshold coherent, locally bounded, and, for every \(P\in\cP\) and every \(b>0\),
\begin{equation}\label{eq:family-overshoot-condition}
\E_P[A_{T_b}^\dagger]\leq\E_P[T_b],
\end{equation}
where \(A\) is defined by \eqref{eq:canonical-family-level}. When these conditions hold, \(A\) itself is a representing weak e-detector.
\end{theorem}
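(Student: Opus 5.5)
The plan is to reduce everything to the identity between the canonical running level \(A\) and the running maximum of any representing process. The key fact is that \(\{A_n\geq c\}=\{T_c\leq n\}\) for every \(c>0\) and every finite \(n\). This holds whenever the family is threshold coherent, and it turns weak validity of \(A\) at level \(c\) into exactly the overshoot inequality \eqref{eq:family-overshoot-condition}.

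\emph{Necessity.} Suppose \(T_b=T_b(M)\) for all \(b>0\), with \(M\) a finite-valued weak e-detector.
\begin{enumerate}[label=\textup{(\arabic*)},leftmargin=2.2em]
\item \emph{Threshold coherence.} Monotonicity \eqref{eq:family-monotone} is immediate, since crossing a higher level requires crossing every lower one. Left-continuity \eqref{eq:family-left-cont} is also direct. Suppose \(\sup_{q<b}T_q=n<\infty\). Then \(\overline M_n:=\max_{1\leq j\leq n}M_j\geq q\) for all rational \(q<b\), hence \(\overline M_n\geq b\) and \(T_b\leq n\). The reverse inequality follows from monotonicity.
\item \emph{Identification of \(A\).} The same reasoning gives \(A_n=\max_{1\leq j\leq n}M_j\) for \(n\geq1\), and \(A_0=0\) because alarm times are positive. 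So the family is locally bounded, since \(M\) is finite-valued.
\item \emph{Overshoot inequality.} By \Cref{prop:running-maxima}(a), the running maximum at a first crossing equals the crossing value, so \(A_{T_b}^\dagger=M_{T_b}^\dagger\). Weak validity of \(M\) at level \(b\) is then exactly \eqref{eq:family-overshoot-condition}.
\end{enumerate}

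\emph{Sufficiency.} Assume coherence, local boundedness, and \eqref{eq:family-overshoot-condition}.
\begin{enumerate}[label=\textup{(\arabic*)},leftmargin=2.2em]
\item \emph{Basic properties of \(A\).} \(A\) is finite and nonnegative with \(A_0=0\). It is adapted because \(\{A_n>x\}=\bigcup_{q\in\mathbb Q_+,\,q>x}\{T_q\leq n\}\in\cF_n\).
\item \emph{Forward direction of the key fact.} If \(T_c\leq n\), then monotonicity gives \(T_q\leq n\) for all rational \(q<c\), so \(A_n\geq c\).
\item \emph{Reverse direction.} If \(A_n\geq c\), then for every rational \(q<c\) there is a rational \(q'>q\) with \(T_{q'}\leq n\). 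Monotonicity gives \(T_q\leq n\), so \(\sup_{q<c}T_q\leq n\), and left-continuity \eqref{eq:family-left-cont} yields \(T_c\leq n\).
\item \emph{Crossing times.} Hence \(T_c(A)=\inf\{n\geq1:T_c\leq n\}=T_c\), using \(T_c\geq1\).
\item \emph{Weak validity.} At every level \(c\), weak validity of \(A\) reads \(\E_P[A^\dagger_{T_c}]\leq\E_P[T_c]\), which is the hypothesis. So \(A\) is a representing weak e-detector.
\end{enumerate}

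The main obstacle I expect is this measure-theoretic bookkeeping. Left-continuity is used at exactly one point: \(\sup\) over rationals strictly below \(c\) must be matched to the threshold event at \(c\) itself. The convention that crossings start at time one is also essential, since it keeps \(A_0=0\) consistent with \(M_0=0\). The validity inequalities themselves are immediate once the key fact is in place.
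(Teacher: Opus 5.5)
Your proposal is correct and follows the paper's proof essentially step for step. Under threshold coherence you establish \(\{A_n\geq c\}=\{T_c\leq n\}\), using left-continuity exactly where the paper does, so that weak validity of \(A\) becomes literally the overshoot inequality; for necessity you identify \(A\) with the running maximum of \(M\) and invoke \Cref{prop:running-maxima}(a). You also correctly fill in details the paper leaves implicit: adaptedness of \(A\), \(A_0=0\), the necessity of local boundedness, and left-continuity of crossing times in the level.
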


\begin{proof}
Assume first that the family is threshold coherent. The definitions imply
\[
A_n\geq b\quad\Longleftrightarrow\quad T_b\leq n.
\]
The forward implication uses \eqref{eq:family-left-cont} when the supremum in \eqref{eq:canonical-family-level} reaches \(b\) only through rationals below \(b\). Hence \(T_b(A)=T_b\) for every \(b\). Condition \eqref{eq:family-overshoot-condition} is exactly weak validity of \(A\) at every level.

Conversely, suppose a weak detector \(M\) represents the family. Threshold crossings of any process satisfy \eqref{eq:family-monotone}--\eqref{eq:family-left-cont}. Replace \(M\) by its running maximum \(\overline M_n=\max_{j\leq n}M_j\). This preserves every crossing time and every value at the first crossing, hence preserves weak validity. The family determines this running maximum uniquely through
\[
\overline M_n
=\sup\{b:T_b(M)\leq n\}
=A_n.
\]
Therefore \(A\) must satisfy \eqref{eq:family-overshoot-condition}.
\end{proof}

The extra condition is an overshoot constraint, and scale-wise ARL bounds do not imply it.

\begin{example}[Finite ARL at every scale is insufficient]\label{ex:family-insufficient}
Fix \(p\in(0,1)\) and \(H>1\), let \(Y\sim\operatorname{Bernoulli}(p)\), and reveal \(Y\) at time one. For \(0<b\leq H\), define
\[
T_b=
\begin{cases}
1, &0<b\leq1,\\
1, &1<b\leq H\text{ and }Y=1,\\
\left\lceil\dfrac{b-p}{1-p}\right\rceil,
&1<b\leq H\text{ and }Y=0.
\end{cases}
\]
Choose a constant \(K\geq1\) large enough that \(\lceil Kb\rceil\geq\lceil(H-p)/(1-p)\rceil\) for every \(b>H\), and set \(T_b=\lceil Kb\rceil\) deterministically for \(b>H\). The resulting family is threshold coherent and every crossing time is finite. Moreover,
\[
\E[T_b]
=p+(1-p)\left\lceil\frac{b-p}{1-p}\right\rceil
\geq b,
\qquad 1<b\leq H,
\]
while \(\E[T_b]=1\geq b\) for \(b\leq1\) and \(\E[T_b]=\lceil Kb\rceil\geq b\) for \(b>H\). Thus the scale-wise ARL inequality holds with finite expectation at every scale.

At time one, however, the canonical running level is
\[
A_1=
\begin{cases}
H,&Y=1,\\
1,&Y=0.
\end{cases}
\]
Since \(T_1=1\),
\[
\E[A_{T_1}]
=pH+(1-p)
>1
=\E[T_1].
\]
The overshoot condition in \Cref{thm:simultaneous-family} fails. Hence no single weak e-detector can realize the entire family, even though every \(T_b\) is almost surely finite and separately has a weak representation at its own scale.
\end{example}

\section{Conjunctions of ARL and tail criteria}\label{app:conjunction}

A procedure may be required to have both a large mean run length and a prescribed deterministic lower-tail envelope. These two conditions can be represented by one process, but validity must be imposed in two different ways.

For a process \(M\), write \(\overline M_n=\max_{1\leq j\leq n}M_j\).

\begin{theorem}[Universality for an ARL--tail conjunction]\label{thm:conjunction}
Fix \(b>0\), a nondecreasing \(g:\N\to(0,1]\), and an alarm time \(T\). The following are equivalent.
\begin{enumerate}[label=\textup{(\roman*)},leftmargin=2.4em]
\item For every \(P\in\cP\),
\begin{equation}\label{eq:conjunction-criterion}
\E_P[T]\geq b
\quad\text{and}\quad
P(T\leq n)\leq g(n)\ \text{ for every }n\geq1.
\end{equation}
\item There exists a weak \(\cP\)-e-detector \(M\) such that \(T=T_b(M)\) and
\begin{equation}\label{eq:conjunction-running-budget}
\E_P[\overline M_n]\leq b g(n)
\qquad\text{for every }P\in\cP\text{ and }n\geq1.
\end{equation}
\end{enumerate}
Under \textup{(i)}, the canonical witness is \(M_n=b\1\{T\leq n\}\).
\end{theorem}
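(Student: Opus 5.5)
The plan is to prove the two implications separately, reusing \Cref{thm:weak-universality} for the ARL half and a Markov-type argument on the running maximum for the tail half.

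\textbf{Direction (ii)\(\Rightarrow\)(i).} Given a weak \(\cP\)-e-detector \(M\) with \(T=T_b(M)\), the ARL inequality \(\E_P[T]\geq b\) follows from \Cref{thm:weak-soundness} at level \(c=b\). For the tail bound, fix \(n\geq1\). The key observation is pathwise: \(\{T\leq n\}=\{T_b(M)\leq n\}\subseteq\{\overline M_n\geq b\}\), since crossing level \(b\) at some time \(j\leq n\) forces \(\max_{1\leq j\leq n}M_j\geq b\). Note that \(\overline M\) starts at index \(1\), which matches \(T\geq1\). Markov's inequality and \eqref{eq:conjunction-running-budget} then give
\[
P(T\leq n)\leq \frac{\E_P[\overline M_n]}{b}\leq g(n).
\]

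\textbf{Direction (i)\(\Rightarrow\)(ii).} Here I take the canonical witness \(M_n=b\1\{T\leq n\}\). \Cref{thm:weak-universality} already shows that it is a weak e-detector whose level-\(b\) crossing time is exactly \(T\). Since \(M\) is nondecreasing, \(\overline M_n=M_n=b\1\{T\leq n\}\). Hence
\[
\E_P[\overline M_n]=bP(T\leq n)\leq bg(n)
\]
by the tail part of \eqref{eq:conjunction-criterion}.

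Neither step poses a real obstacle. The only point needing care is the running-maximum inclusion in the first direction. It is exactly why the budget is imposed on \(\overline M_n\) rather than on \(M_n\): a non-monotone weak detector can cross \(b\) and then fall back, so a fixed-time bound on \(\E_P[M_n]\) alone would not control \(P(T\leq n)\). If desired, one could instead first pass to the running maximum via \Cref{prop:running-maxima}, but working with \(\overline M\) directly avoids this.
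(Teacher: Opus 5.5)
Your proposal is correct and follows essentially the same route as the paper. For (ii)\(\Rightarrow\)(i) you use weak soundness together with the inclusion \(\{T_b(M)\leq n\}\subseteq\{\overline M_n\geq b\}\) and Markov's inequality. For the converse you take the canonical indicator \(b\1\{T\leq n\}\), which is weak by \Cref{thm:weak-universality} and monotone, so \(\overline M_n=M_n\).
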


\begin{proof}
Under \textup{(ii)}, weak soundness gives \(\E_P[T]\geq b\). Moreover,
\[
\{T\leq n\}=\{T_b(M)\leq n\}
\subseteq\{\overline M_n\geq b\},
\]
so Markov's inequality and \eqref{eq:conjunction-running-budget} give \(P(T\leq n)\leq g(n)\).

Conversely, define \(M_n=b\1\{T\leq n\}\). The ARL part of \eqref{eq:conjunction-criterion} makes \(M\) weak by \Cref{thm:weak-universality}. Since \(M\) is nondecreasing,
\[
\E_P[\overline M_n]
=bP(T\leq n)
\leq bg(n).
\]
\end{proof}

The conjunction does not require taking the maximum of two separate indicator certificates: the same all-or-nothing process works, but it is tested both at its own crossings and at deterministic horizons. For \(g(n)=\min\{n/b,1\}\), this conjunction remains weaker than optional-horizon control; \Cref{ex:det-not-optional} satisfies both ARL and the linear deterministic envelope but fails the adaptive-horizon inequality.

\section{Synthesis: horizons, clocks, windows, and restarts}\label{app:synthesis}
\enlargethispage{2\baselineskip}

The main clock theorem and the preceding appendices suggest a general taxonomy. An error metric specifies both a \emph{budget} and a class of \emph{horizons} at which that budget must remain valid. Fixed-time validity yields marginal e-values; validity at all stopping times yields e-processes or strong clocked e-processes; validity only at a process's own crossings yields weak e-detectors; criteria indexed by restart times require fields; and simultaneous requirements impose several validity conditions on the same process.

\begin{table}[ht]
\centering
\footnotesize
\renewcommand{\arraystretch}{1.16}
\begin{tabularx}{\textwidth}{@{}>{\raggedright\arraybackslash}p{0.29\textwidth}>{\raggedright\arraybackslash}p{0.31\textwidth}X@{}}
\toprule
False-alarm criterion & Universal e-representation & Canonical certificate \\
\midrule
\(\E[w(T)]\leq\alpha\) & e-process with boundary \(w(n)/\alpha\) & \(\alpha^{-1}w(T)\1\{T\leq n\}\) \\
\(\E[C_T]\geq b\) & weak \(C\)-clocked e-process & \(b\1\{T\leq n\}\) \\
\(P(T\leq\sigma)\leq\E[C_\sigma]/b\) & strong \(C\)-clocked e-process & \(b\1\{T\leq n\}\) \\
\(P(T\leq n)\leq g(n)\) & \(g\)-tail e-envelope & \(g(n)^{-1}\1\{T\leq n\}\) \\
Historywise hazard \(\leq\alpha_n\) & e-hazard sequence & \(\alpha_n^{-1}\1\{T=n\}\) \\
Local \(m\)-window probability \(\leq\alpha\) & rolling at-risk e-field & \(\alpha^{-1}\1\{k<T\leq n\}\) \\
Residual ARL \(\geq b\) & restartable weak e-detector field & \(b\1\{k<T\leq k+j\}\) \\
Historywise optional residual control & restartable strong e-detector field; equivalently e-hazard control & \(b\1\{k<T\leq k+j\}\) \\
Optional expected count rate \(\leq\lambda\) & counting e-detector \(N/\lambda\) & normalized count process \\
One family \(\{T_b\}\) at every threshold & coherent weak detector plus overshoot control & canonical running level \(A_n\) \\
ARL and deterministic tail jointly & weak detector plus running-maximum budget & \(b\1\{T\leq n\}\) \\
\bottomrule
\end{tabularx}
\caption{Exact e-representations for alternative false-alarm criteria. The clock terminology is developed in the main text; the terms e-hazard, tail e-envelope, and rolling e-field are proposed in the appendices.}
\label{tab:extended-universality}
\end{table}

Several patterns are worth emphasizing.

First, the canonical representations remain all-or-nothing. Their purpose is to establish completeness of an e-language for a validity criterion, not to provide a powerful detection statistic. Practical procedures should generally seek smoother evidence accumulation, favorable post-change growth, and efficient delay.

Second, the same indicator \(b\1\{T\leq n\}\) can represent distinct criteria because the validity requirement is imposed at different horizon classes. Own crossings encode an expected-clock lower bound; deterministic times encode a tail envelope; all stopping times encode optional-clock control. Under the minimal alarm filtration, the intermediate family of self-censoring horizons \(T\wedge m\) already characterizes optional validity through the truncated-mean inequalities in \Cref{thm:run-length-law}. The process formula alone does not identify the error metric---the quantification over horizons and the available filtration do.

Third, local criteria are intrinsically restart-indexed. A scalar process can encode a global PFA, ARL, or optional-clock constraint, but a guarantee for every rolling window or every residual lifetime naturally produces a two-parameter family. Weak residual ARL yields genuinely weaker restartable rows. Requiring strong row validity after every history collapses exactly to the one-step conditional-hazard criterion, so the e-hazard sequence is the strong boundary case of the restartable viewpoint.

Finally, clocks separate the amount of budget from the passage of calendar time. The choices \(C_n\equiv1\), \(C_n=n\), and \(C_n=\sum_{j\leq n}a_j\) correspond, respectively, to a fixed testing budget, a linearly replenished detection budget, and an exposure-weighted budget. This clock perspective provides a common home for e-processes, e-detectors, and several new variants developed above.

\end{document}